\newtheorem{thm}{Theorem}
\newtheorem{lem}{Lemma}[section]
\newtheorem{cor}{Corollary}[section]
\newtheorem{prop}[lem]{Proposition}
\theoremstyle{definition}
\newtheorem{defn}[lem]{Definition}
\theoremstyle{remark}
\newtheorem{rem}{Remark}[section]
\numberwithin{equation}{section}
\newcommand{\norm}[1]{\left\Vert#1\right\Vert}
\newcommand{\set}[1]{\left\{#1\right\}}
\newcommand{\pr}[2]{\langle  #1,#2\rangle}
\newcommand{\G}{\Gamma}
\newcommand{\calA}{\mathcal{A}}
\newcommand{\calC}{\mathcal{C}}
\newcommand{\calF}{\mathcal{F}}
\newcommand{\calN}{\mathcal{N}}
\newcommand{\calO}{\mathcal{O}}
\newcommand{\calP}{\mathcal{P}}
\newcommand{\calR}{\mathcal{R}}
\newcommand{\bbZ}{\mathbb{Z}}
\newcommand{\bbQ}{\mathbb{Q}}
\newcommand{\bbR}{\mathbb R}
\newcommand{\bbC}{\mathbb C}
\newcommand{\bbH}{\mathbb{H}}
\newcommand{\Tr}{ \mbox{Tr}}
\newcommand{\SL}{ \mathrm{SL}}
\newcommand{\SO}{ \mathrm{SO}}
\newcommand{\PSL}{ \mathrm{PSL}}
\newcommand{\SU}{ \mathrm{SU}}
\newcommand{\GL}{ \mbox{GL}}
\newcommand{\Mat}{\mathrm{Mat}}
\newcommand{\ind}{\mathrm{Ind}}
\newcommand{\vol}{\mathrm{vol}}
\newcommand{\lap}{\triangle}
\newcommand{\bs}{\backslash}
\newcommand{\reg}{\mathrm{Reg}}
\newcommand{\res}{\mathrm{Res}}
\newcommand{\f}{\mathfrak{f}}
\newcommand{\frakD}{\mathfrak{D}}
\newcommand{\g}{\mathfrak{g}}
\begin{document}
\title[Strong Spectral gaps]
{Strong Spectral Gaps for Compact Quotients of Products of $\PSL(2,\bbR)$}%
\author{Dubi Kelmer and Peter Sarnak}%
\address{Department of Mathematics, University of Chicago,  5734 S. University
Avenue Chicago, Illinois 60637}
\email{kelmerdu@math.uchicago.edu}
\address{Department of Mathematics, Princeton University, Fine
Hall,Washington Road, Princeton, NJ 08544 and Institute for Advanced Study ,Princeton NJ 08540}
\email{sarnak@Math.Princeton.edu}

\thanks{The first author was partially supported by the NSF grant DMS-0635607, and the second author by the NSF grant DMS-0758299.}%
\subjclass{}%
\keywords{}%

\date{\today}%
\dedicatory{}%
\commby{}%

\begin{abstract}
The existence of a strong spectral gap for quotients $\Gamma\bs G$ of
noncompact connected semisimple Lie groups is crucial in many
applications. For congruence lattices there are uniform and very
good bounds for the spectral gap coming from the known bounds
towards the Ramanujan-Selberg Conjectures. If $G$ has no compact
factors then for general lattices a strong spectral gap can still be
established, however, there is no uniformity and no effective bounds
are known. This note is concerned with the strong spectral gap for an
irreducible co-compact lattice $\Gamma$ in $G=\PSL(2,\bbR)^d$ for
$d\geq 2$ which is the simplest and most basic case where the
congruence subgroup property is not known. The method used here
gives effective bounds for the spectral gap in this setting.
\end{abstract}

\maketitle
\section*{introduction}
This note is concerned with the strong spectral gap property for an
irreducible co-compact lattice $\Gamma$ in $G=\PSL(2,\bbR)^d,\;
d\geq 2$. Before stating our main result we review in some detail
what is known about such spectral gaps more generally. Let $G$ be a
noncompact connected semisimple Lie group with finite center and let
$\Gamma$ be a lattice in $G$. For $\pi$ an irreducible unitary
representation of $G$ on a Hilbert space $H$, we let $p(\pi)$ be the
infimum of all $p$ such that there is a dense set of vectors $v\in
H$ with $\pr{\pi(g)v}{v}$ in $L^p(G)$. Thus if $\pi$ is finite
dimensional $p(\pi)=\infty$, while $\pi$ is tempered if and only if
$p(\pi)=2$. In general $p(\pi)$ can be computed from the Langlands
parameters of $\pi$ and for many purposes it is a suitable measure
of the non-temperedness of $\pi$ (if $p(\pi)>2$). The regular
representation, $f(x)\mapsto f(xg)$, of $G$ on $L^2(\Gamma\bs G)$ is
unitary and if $\Gamma\bs G$ is compact it decomposes into a
discrete direct sum of irreducibles while if $\Gamma\bs G$ is
non-compact the decomposition involves also continuous integrals via
Eisenstein series. In any case, let $E$ denote the exceptional
exponent set defined by
\begin{equation}
E(\Gamma\bs G)=\left\lbrace\begin{array}{cc}
& p(\pi)> 2, \mbox{ and $\pi$ is an infinite}\\
p(\pi): &  \mbox{dimensional irreducible representation}\\
& \mbox{of $G$  occurring weakly in } L^2(\Gamma\bs G)\end{array}\right\rbrace
\end{equation}
If $E(\Gamma\bs G)$ is empty set $p(\G\bs G)=2$ and otherwise let
\begin{equation}
p(\Gamma\bs G)=\sup E(\Gamma\bs G)
\end{equation}
We say that $\Gamma\bs G$ has a strong spectral gap if $p(\Gamma\bs
G)<\infty$. The existence of such a gap is critical in many
applications. In the case that $\Gamma$ is a congruence group (this
is the automorphic case discussed below) the set $E$ and the precise
value of $p$ are closely connected to the generalized Ramanujan
conjectures \cite{Clozel07,Sarnak05}. In ergodic theoretic
applications $p(\Gamma\bs G)$ controls the precise mixing rate of
the action of noncompact subgroups of $G$ on $\G\bs G$
\cite{Moore66,HoweMoore79,Schmidt81}.  In questions of local rigidity of related
actions the spectral gap controls the ``small divisors'' in the
linearized co-cycle equations \cite{DamjanovicKatok07} and it plays an important
role in the study of the cohomology of $\Gamma$ \cite{BergeronClozel05,BorelWallach80}.

The congruence case is defined as follows. Let $H$ be a semisimple
linear algebraic group defined over a number field $F$ and let
$S_\infty$ denote the set of archimedean places of $F$. The group
$G=\prod_{\nu\in S_\infty} H(F_\nu)$, where $F_\nu$ is the
completion of $F$ at $\nu$, and $\Gamma$ is a congruence subgroup of
$H(F)$ embedded into $G$ diagonally. After \cite{BurgerSarnak91} and
\cite{Clozel03}, which establish bounds towards Ramanujan
Conjectures in general, one knows that $p(\Gamma\bs G)$ is finite in
these cases. In fact the methods used there yield explicit, and in
many cases quite sharp, bounds for $p(\Gamma\bs G)$ which depend
only on $H$ and not on $\Gamma$. The latter is crucial in many
number theoretical as well as group theoretic applications
\cite{Lubotzky94,Sarnak05}. Arthur's conjectures
\cite{Arthur02,Clozel07} for the discrete spectrum for such spaces
$\G\bs G$ imply strong restrictions on the non-tempered $\pi$'s that
can occur. Specifically they must correspond to local Arthur
parameters which gives a ``purity'' property \cite[Chapter
6]{BergeronClozel05} and which in turn restricts the set $E(\Gamma\bs
G)$. In particular the set $E(\Gamma\bs G)$ should be finite, though
the set of non-tempered $\pi$'s occurring in $L^2(\G\bs G)$ can
certainly be infinite.

Two basic congruence examples are $(i)$ $G=\SL(2,\bbR)$ and $\G$ a
congruence subgroup of $\SL(2,\bbZ)$ in which case Selbergs
eigenvalue Conjecture \cite{Selberg65} is equivalent to $E(\G\bs
G)=\emptyset$, while it is known that $E(\G\bs G)$ is finite and is
contained in $(2,\frac{64}{25}]$ \cite{KimSarnak03}. $(ii)$
$G=\SL(3,\bbR)$ and $\G$ a congruence subgroup of $\SL(3,\bbZ)$ in
which case Langlands Conjectures for automorphic cuspidal
representations on $\GL_n$ \cite{Langlands01} imply that $E(\G\bs
G)=\{4\}$. The exceptional exponent comes from the unitary
Eisenstein series for the maximal parabolic subgroup. From
\cite{KimSarnak03} it follows that $E(\G\bs G)\subset \{4\}\cup
(2,\frac{28}{9}]$ but here  $E$ is not known to be finite.

Returning to the general lattice $\Gamma$, we may, without any
serious loss of generality take $G$ to be the direct product
$G_1\times G_2\times\cdots\times G_n$ of simple Lie groups with
trivial center and assume that $\G$ is irreducible. By the latter we
mean that if $G=G_1\times G_2\cdots\times G_r\times G_c$ with $G_j$
noncompact for $j=1,\ldots, r$ and $G_c$ compact, that the
projection of $\Gamma$ onto the compact factor is dense and if $r>1$
so are the projections of $\Gamma$ on each $G_j,\;j=1,\ldots, r$.
This implies that the only $G_j$ invariant vector in $L^2(\G\bs G)$
is the constant function.

If $G$ has no compact factors then $L^2(\Gamma\bs G)$ has a strong
spectral gap. To see this consider separately the case that $r=1$
and $r>1$. If $r=1$ and the rank of $G$ is $1$, then the spectral
gap follows directly from the discreteness of the spectrum of the
Laplacian below the (possible) continuous spectrum on $\G\bs G/K$
where $K$ is a maximal compact subgroup of $G$ (in the cases
$G=\SO(n,1)$ and $\SU(n,1)$, $p(\G\bs G)$ can be arbitrarily large
as one varies over $\Gamma$, this is shown for $\SO(2,1)$ in
\cite{Selberg65} by starting with a $\G$ with $H_1(\G)$ infinite and
this can be done in the same way for these $G$'s). If the rank of G
is at least 2 then G has property T and $p(\Gamma\bs G)$ is less
than or equal to $p(G)$ which is finite \cite{Cowling79}. The
optimal exponent $p(G)$ associated with such a $G$ has been
determined in many cases including classical groups
\cite{Howe89,Li95} and some exceptional groups \cite{LokeSavin06},
while explicit and strong upper bounds for $p(G)$ are given for split
exceptional groups in \cite{LiZhu96} and in
complete generality in \cite{Oh02}. If $r\geq 2$, we need to use
more machinery to deduce the spectral gap. Firstly by Margulis
\cite[Capter IX]{Margulis91}, $\Gamma$ is arithmetic and hence is
commensurable with a congruence lattice of the type discussed in the
previous paragraph, for which we have a strong spectral gap. This
coupled with the Lemma of Furman-Shalom, Kleinbock-Margulis (see
\cite[page 462]{KleinbockMargulis99}) allows one to pass from the
congruence group to $\Gamma$ and to conclude that $p(\G\bs
G)<\infty$. Note that any $\pi$ occurring in $L^2(\Gamma\bs G)$ is
of the form $\pi\cong \pi_1\otimes \pi_2\cdots\otimes \pi_r$, with
$\pi_j$ an irreducible representation of $G_j$ and that
$p(\pi)=\max_j p(\pi_j)$ (it is this $\max p(\pi_j)$ that is the
issue and which makes the problem difficult, if we used $\min
p(\pi_j)$ we could proceed as in the case $r=1$). In applying the
above lemma one loses all information in terms of specifying
$p(\G\bs G)$. While the analysis can be made effective in principle,
doing so would be unwieldy and the bound would anyway depend very
poorly on $\Gamma$. For arithmetic applications the latter is a
serious defect. We remark that in this case that $G$ has no compact
factors we don't know if $E(\G\bs G)$ is necessarily finite.

When $G$ has a compact factor the situation is apparently more
difficult. In the first place it is not known in general that $\G\bs
G$ has a strong spectral gap. The most problematic case is the
simplest one, that is $G=SL(2,\bbR)\times \SU(2)$. The suggestion
(2) in \cite[page 57]{gamburdJakobsonSarnak99} is equivalent to the
existence of a strong spectral gap for any irreducible $\Gamma$ in
such a $G$. In \cite{gamburdJakobsonSarnak99} this spectral gap is
proved for many $\Gamma$'s and this has been extended (using novel
methods from additive combinatorics) in \cite{BourgainGamburd08} to
include any $\Gamma$ whose projection on $\SU(2)$ consists of
matrices with algebraic numbers as entries. However in this case of
$G$ having compact factors, the set $E(\G\bs G)$ can be far more
complicated. Borrowing a technique in
\cite{LubotzkyPhillipsSarnak87} we show the following
\begin{thm}\label{t:generic}
There is an irreducible $\Gamma$ in $G=\SL(2,\bbR)\times \SU(2)$ for
which $E(\G\bs G)$ is infinite. In fact this is so for the generic
$\G$.
\end{thm}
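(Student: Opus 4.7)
The plan is to realize irreducible cocompact lattices in $G$ as graphs $\Gamma_\rho = \{(\gamma, \rho(\gamma)) : \gamma \in \Gamma_1\}$ of representations $\rho : \Gamma_1 \to \SU(2)$, with $\Gamma_1 \subset \SL(2,\bbR)$ a fixed cocompact surface group of genus $g \geq 2$; irreducibility of $\Gamma_\rho$ is equivalent to $\rho$ having dense image in $\SU(2)$, carving out a real-analytic open set $\calM$ in the $\SU(2)$-character variety of $\Gamma_1$. Decomposing $L^2(\Gamma_\rho \bs G)$ under the right $\SU(2)$-action yields
\[
L^2(\Gamma_\rho \bs G) = \bigoplus_{n \geq 0} H_n(\rho) \otimes V_n,
\]
where $V_n$ is the $(n+1)$-dimensional irreducible of $\SU(2)$ and $H_n(\rho)$ is the $\SL(2,\bbR)$-module of $L^2$-sections of the flat $V_n$-bundle over $\Gamma_1 \bs \bbH$ associated to $V_n \circ \rho$. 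Complementary-series constituents of $H_n(\rho)$ correspond to eigenvalues of the twisted hyperbolic Laplacian $\Delta_n(\rho)$ in $(0, 1/4)$, with $p(\pi) \to \infty$ as such an eigenvalue tends to $0$; hence $|E(\Gamma_\rho \bs G)| = \infty$ follows from the presence of infinitely many distinct small eigenvalues among the family $\{\Delta_n(\rho)\}_n$.

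For the existence part I would borrow from \cite{LubotzkyPhillipsSarnak87} the deformation-off-a-degenerate-base technique. Fix a finite subgroup $F \subset \SU(2)$ and a surjection $q : \Gamma_1 \to F$ (coming, say, from $\Gamma_1^{\mathrm{ab}} = \bbZ^{2g}$), and set $\rho_0 = \iota \circ q$. Since $\rho_0$ factors through $F$, the twisted Laplacian $\Delta_n(\rho_0)$ has $0$ as an eigenvalue of multiplicity $\dim V_n^F$, which is positive for infinitely many $n$ and grows linearly in $n$ by character theory on $F$. Deform $\rho_0$ into $\calM$ along a real-analytic family $\rho_t$ with $\rho_t \in \calM$ for $0 < |t| < t_0$. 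Analytic perturbation theory lifts the null modes to low-lying eigenvalues of $\Delta_n(\rho_t)$; since the deformation is governed by a single cohomology class in $H^1(\Gamma_1, \mathfrak{su}(2))$, its effective rank on each null space is bounded independently of $n$, so a positive-density fraction of the null modes persist as very small eigenvalues. For $t$ sufficiently small this produces infinitely many distinct eigenvalues of $\bigoplus_n \Delta_n(\rho_t)$ in $(0, 1/4)$, establishing $|E(\Gamma_{\rho_t} \bs G)| = \infty$.

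To promote this to the generic statement, consider the open sets $U_{N,\epsilon} = \{\rho \in \calM : \Delta_n(\rho) \text{ has an eigenvalue } < \epsilon \text{ for some } n > N\}$; each is open by continuity of the spectrum, and we want $\bigcap_{N,\epsilon} U_{N,\epsilon}$ to be residual in $\calM$, which by Baire category reduces to density of each $U_{N,\epsilon}$. The main technical obstacle is precisely this density, since a direct perturbation off a degenerate $\rho_0$ populates only a countable union of real-analytic arcs in $\calM$. One addresses this by combining the LPS dimension-count --- which quantifies how much of the null space at $\rho_0$ survives under a generic deformation --- with a careful, uniform-in-$n$, higher-order perturbation analysis: the operator norm of the derived $\mathfrak{su}(2)$-action on $V_n$ grows linearly in $n$, so the naive estimate $\lambda_n(t) = O(n^2 t^2)$ gives only $O(1/t)$ small eigenvalues and must be sharpened by exploiting the rank-boundedness of the perturbation on the linearly-growing null spaces to produce an $n$-uniform supply of eigenvalues below any fixed threshold.
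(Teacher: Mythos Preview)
Your setup---realizing $\Gamma$ as the graph of $\rho:\Gamma_1\to\SU(2)$ and decomposing $L^2(\Gamma_\rho\bs G)$ into twisted pieces $H_n(\rho)\otimes V_n$---matches the paper's. But the core argument diverges, and your version contains a gap that you yourself flag but do not close.

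Your plan is to perturb off a $\rho_0$ with finite image $F$, where $\Delta_n(\rho_0)$ has a null space of dimension $\dim V_n^F\sim n/|F|$, and track these zero modes under $\rho_0\mapsto\rho_t$. The problem is exactly the one you name at the end: the derived $\mathfrak{su}(2)$-action on $V_n$ has norm $\sim n$, so the perturbed eigenvalues satisfy only $\lambda_n(t)=O(n^2t^2)$, and for a \emph{fixed} $t$ this keeps only the first $O(1/t)$ values of $n$ below any threshold---finitely many. Your ``rank-boundedness'' remark is suggestive but is not an argument: even if the Hessian of the eigenvalue functional on the null space has bounded rank, the remaining modes can still escape at higher order with $n$-dependent rate, and nothing you have written excludes this. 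So neither the existence claim nor the density of your sets $U_{N,\epsilon}$ actually follows.

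The paper avoids perturbation entirely. It takes $\Lambda=\Gamma(2)$ (non-cocompact, free on two generators $A,B$) rather than a cocompact surface group, and chooses $\rho$ with image isomorphic to $(\bbZ/n\bbZ)*(\bbZ/n\bbZ)$---already dense in $\SU(2)$, so $\Gamma_\rho$ is irreducible from the outset. Setting $L=\ker\rho$, the quotient $L\bs\bbH$ is an infinite-volume surface; since $\Lambda/L$ is non-amenable (Brooks) and $L$ contains parabolics (Beardon, Elstrodt--Patterson), one has $0<\lambda_0(L\bs\bbH)<1/4$ and $\lambda_0$ is an accumulation point of the spectrum of $\lap$ on $L^2(L\bs\bbH)$. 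The key step is the limit relation
\[
\frac{1}{l+1}\sum_{\gamma\in\Lambda}k(\gamma z,w)\,\chi_l(\rho(\gamma))\;\longrightarrow\;\sum_{\gamma\in L}k(\gamma z,w)\qquad (l\to\infty),
\]
coming from $\tfrac{1}{l+1}\chi_l(u)\to 1$ for $u=1$ and $\to 0$ otherwise; this forces the spectral measures $\mu_l$ attached to $\Delta_l(\rho)$ to converge weakly to the spectral measure of $L\bs\bbH$. Any interval in $(\lambda_0,\lambda_0+\epsilon)$ meeting the spectrum of $L\bs\bbH$ therefore contains an eigenvalue $\lambda_{j,l}$ for all large $l$, producing infinitely many exceptional exponents for this \emph{single} $\rho$. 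The generic statement then follows immediately by continuity of the spectrum in $u\in\SU(2)\times\SU(2)$ and Baire category, since such $\rho$'s form a dense set---whereas your deformation arcs are nowhere dense. Note also that the presence of parabolics, absent in your cocompact setup, is precisely what guarantees $\delta(L)>1/2$ and hence $\lambda_0<1/4$.
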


Next, we turn to the simplest and most basic case for which an
effective spectral gap is not known, that is for $\Gamma$ an
irreducible co-compact lattice in $G=\PSL(2,\bbR)^d,\;d\geq 2$. Such
a $\Gamma$ is arithmetic and from the classification of such groups
\cite{Weil60} we have that $\Gamma$ is commensurable with the unit
group in a suitable division algebra (see section \ref{s:QuatAlg}).
Serre conjectures that the congruence subgroup property holds for
such groups, this being the most elementary and fundamental case for
which the congruence subgroup problem is open (see \cite[Chapter
7]{LubotzkyAlexander03}). If true, this coupled with the
Jacquet-Langlands correspondence \cite{JacquetLanglands70} yields
that $E(\G\bs G)$ is empty if the Ramanujan-Selberg conjecture
\cite{Selberg65} is true, and that $E(\Gamma\bs G)\subseteq
(2,\frac{18}{7}]$ using \cite{KimShahidi02}.

We can now formulate our main result. We will work in a slightly
more general setting allowing $\Gamma$ to act via a unitary
representation. Let $\rho:\Gamma\to U(N)$ be an $N$-dimensional
unitary representation of $\Gamma$. Let $L^2(\Gamma\bs G,\rho)$
denote the space of functions from $G$ to $\bbC^N$ satisfying
\begin{eqnarray}
 f(\gamma g)=\rho(\gamma)f(g),\\
\nonumber \int_{\G\bs G}|f(g)|^2dg<\infty.
\end{eqnarray}
The regular representation $f(x)\mapsto f(xg)$ of $G$ on
$L^2(\Gamma\bs G,\rho)$ decomposes discretely as
\begin{equation}
L^2(\G\bs G,\rho)\cong \bigoplus_{k=0}^\infty \pi_k(\rho),
\end{equation}
with $\pi_k(\rho)$ irreducible representations of $G$.

\begin{thm}\label{t:gap}
Let $\Gamma\subseteq\PSL(2,\bbR)^d$ be an irreducible co-compact
lattice and $\rho$ and $\pi_k(\rho)$ be as above. Then for any
$\alpha>0$, $p(\pi_k(\rho))<6+\alpha$ except for at most a finite
number of $k$'s. In particular
\[E(\Gamma\bs G)\cap [6+\alpha,\infty)|<\infty.\]
\end{thm}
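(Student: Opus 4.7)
The plan is to prove the theorem via a pre-trace inequality that exploits the arithmetic structure of $\Gamma$ in place of Hecke operators (which would give a stronger bound but require the congruence subgroup property, unknown for $\Gamma$). By Margulis arithmeticity and Weil's classification, $\Gamma$ is commensurable with the norm-one unit group $\calO^1/\{\pm I\}$ of a maximal order $\calO$ in a quaternion division algebra $D$ over a totally real number field $F$ of degree $d$ that is split at all infinite places. The $d$ infinite embeddings of $F$ realize the embedding $\Gamma \hookrightarrow G$, and the reduced norm on $D^\times$ supplies a product formula constraining the archimedean sizes of lattice elements. An irreducible $\pi = \pi_1\otimes\cdots\otimes\pi_d$ occurring in $L^2(\Gamma\bs G,\rho)$ has $p(\pi)>6+\alpha$ iff some factor $\pi_j$ lies in the complementary series with Laplacian eigenvalue $\lambda_j<5/36-\eps$; so it suffices, fixing a factor $j$, to bound the count of $\pi_k$ whose $j$-th factor is in this bad range.

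Fix $j$ and choose a compactly supported test function $H=f*f^{*}$ with $f\in C_c^\infty(G)$ concentrated near the identity in the $j$-th factor, so that the spherical transform $\widetilde H$ is non-negative on the unitary dual of $G_j$ and bounded below by a positive constant on the bad spectral strip $s_j\in[1/3+\delta,1/2)$. Integrating the automorphic kernel $K_H(x,y)=\sum_{\gamma\in\Gamma}H(x^{-1}\gamma y)\rho(\gamma)$ along the diagonal gives the pre-trace identity
\[
\sum_k \widetilde H(\pi_k)\,m_k \;=\; N\cdot\vol(\Gamma\bs G)\,H(e)\;+\;\sum_{[\gamma]\ne[e]}\Tr\rho(\gamma)\,I_\gamma(H),
\]
with $m_k\geq 0$ and $\widetilde H(\pi_k)\geq c>0$ for every bad $\pi_k$, giving a bound on the count of bad $\pi_k$ in terms of the right-hand side.

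The right-hand side is controlled by the arithmetic of $\calO$: as $H$ concentrates near the identity in $G_j$, contributing $\gamma\in\Gamma\setminus\{I\}$ must have $\gamma_j$ near $I$; the product formula for the reduced norm and the discreteness of $\calO$ then force such $\gamma$ to lie in a finite set which empties out as the concentration parameter grows, leaving only the identity term $N\cdot\vol(\Gamma\bs G)\,H(e)$ bounded uniformly. This yields the desired finite bound for each $j$; summing over $j=1,\ldots,d$ proves the theorem. The main obstacle is to calibrate $H$ sharply enough to detect the complementary series down to parameter $s=1/3$ while keeping the orbital-integral sum absorbable by the product-formula constraint; the exponent $6$ is the outcome of this optimization and reflects the loss incurred by not having the Hecke structure of a congruence lattice (which would give Selberg's $p\leq 4$ via $\lambda\geq 3/16$).
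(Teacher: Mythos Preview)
There is a genuine gap in your argument at the step where you assert that $\widetilde H(\pi_k)\geq c>0$ for every bad $\pi_k$. A representation $\pi_k=\pi_{k,1}\otimes\cdots\otimes\pi_{k,d}$ is ``bad'' only by virtue of one factor $\pi_{k,j}$ lying in the complementary series; the remaining factors $\pi_{k,j'}$ may be principal series with arbitrarily large parameter $r_{k,j'}$ or discrete series of arbitrarily large weight. Since $H\in C_c^\infty(G)$, its transform $\widetilde H$ decays rapidly in those other parameters, so no uniform positive lower bound over all bad $\pi_k$ is possible. Your pre-trace inequality then bounds nothing: the left-hand side is a convergent sum in which each bad term may be as small as you like. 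A related point: concentrating $H$ near the identity in the $j$-th factor makes $\widetilde H_j$ approximately constant across the whole unitary dual of $G_j$, so it does not distinguish complementary from tempered $\pi_{k,j}$ at all; to \emph{amplify} the complementary series one must do the opposite and \emph{spread out} in the $j$-th factor.

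This is precisely the difficulty, and the paper's proof is organized around it. One fixes a dyadic window $[T,2T]$ for the large parameter in the other factor(s) and couples two scales: in the bad factor $j$ one takes a function of width $R=c\log T$ (so $\widetilde H_j$ on the complementary series is $\gg T^{c|s_j-1/2|}/\log T$), while in the other factor one localizes at scale $T$ via a function whose transform vanishes to high order at $0$ (this kills the trivial representation, which would otherwise dominate). The geometric side is then controlled by two arithmetic counting arguments: the reduced \emph{trace} (not the reduced norm, which is identically $1$ on $\calO^1$ and carries no information) lies in $\calO_L$, and the constraint $|N_{L/\bbQ}(t)|\geq 1$ for $t\neq 0$ bounds the number of traces in a long thin box by its volume; a class-number estimate then bounds the conjugacy classes per trace. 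The outcome is a geometric upper bound $\ll T^2/\log T + T^{c/2-1+\epsilon}$, and comparing with the spectral lower bound $T^{c|s_j-1/2|}$ forces $|s_j-1/2|\leq \max(2/c,\,1/2-1/c)+o(1)$; optimizing at $c=6$ gives $|s_j-1/2|\leq 1/3$, i.e.\ $p(\pi_k)\leq 6+\alpha$ once $T$ is large. Your sketch has the right ingredients (trace formula, positivity, arithmetic of $\calO$) but misses this coupling between the two factors, which is where the exponent $6$ actually comes from.
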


\begin{rem}\label{r:1}
From the arithmeticity of $\Gamma$ ($n>2$), we know that it is
commensurable to a lattice $\Gamma_\calA$ derived from a quaternion
algebra. We can thus assume (replacing $\Gamma$ by
$\Gamma\cap\Gamma_\calA$ if necessary) that $\Gamma\subseteq
\Gamma_\calA$ is a finite index subgroup. Moreover, since we can
also replace the representation $\rho$ by the induced representation
$\ind_\Gamma^{\Gamma_\calA}\rho$, it is sufficient to prove the
theorem only in the case where $\Gamma=\Gamma_\calA$.
\end{rem}

\begin{rem}\label{r:2}
The theorem implies that $p(\Gamma,\rho)<\infty$ and much more. The
proof yields effective bounds (polynomial in $\dim\rho$) both for
the number of exceptions as well as bounds for $p(\pi_k)$ for these
exceptions. For some applications  the finite number of exceptions
enter as secondary terms in rates of equidistribution and are
harmless, so that the theorem is effectively asserting that
$p(\Gamma,\rho)\leq 6$.
\end{rem}

\begin{rem}
The proof of the theorem is based on the Selberg trace formula
\cite{Efrat87,Hejhal76} and counting arguments involving relative
quadratic extensions of $L$ (the field of definition of the
corresponding quaternion algebra) as in \cite{Shimizu}. One can
probably combine the analysis here with that in \cite{SarnakXue91}
(see also \cite{Huxley84}) to show that for any fixed $\Gamma$ as
above and any $\Lambda$ a congruence subgroup of $\G$ (i.e., the
intersection of $\Gamma$ with a congruence subgroup of the unit
group of the quaternion algebra) that the exceptional  $\pi_k$'s for
$L^2(\Lambda\bs G)$ with $p(\pi_k)>6+\alpha$, consists only of the
finite number of $\pi_k$'s that are there from $\G$ (i.e., no new
exceptional $\pi$'s appear in passing from $\Gamma$ to $\Lambda$).
We have not carried this out and doing so would be of interest since
for most applications this uniform spectral gap is a good substitute
for the Ramanujan Conjectures.
\end{rem}

We apply the theorem to the Selberg Zeta function in this setting.
For simplicity we take $d=2$ and $\Gamma$ torsion free. Each $1\neq
\gamma\in\Gamma$ is of the form $(\gamma_1,\gamma_2)$ with
$\gamma_j\in\PSL(2,\bbR)$ and $\gamma_j\neq 1$. We call $\gamma$
mixed if $\gamma_1$ is hyperbolic and $\gamma_2$ is elliptic. That
is $\gamma_1$ is conjugate to $\begin{pmatrix} N(\gamma)^{1/2} & 0\\
0 & N(\gamma)^{-1/2}\end{pmatrix}$ with $N(\gamma)>1$ and $\gamma_2$
is conjugate to $\begin{pmatrix} \epsilon(\gamma) & 0\\ 0 &
\epsilon(\gamma)\end{pmatrix}$ with $|\epsilon(\gamma)|=1$. For
$m\geq 1$, Selberg  \cite{Selberg} defines a Zeta function (see also
\cite{MoscoviciStanton91})
\begin{equation}\label{e:Zeta}
Z_m(s,\Gamma)=\prod_{\{\gamma\}^*_\Gamma}\mathop{\prod_{\nu=0}^\infty}_{|i|<
m }(1-\epsilon_\gamma^iN(\gamma)^{-s-\nu})^{-1}
\end{equation}
where the product is over all primitive conjugacy classes of mixed
elements in $\Gamma$. He shows that  $Z_m(s,\Gamma)$ is entire
(except when $m=1$ where it has a simple pole at $s=1$) and
satisfies a functional equation  relating $s$ and $1-s$. Its zeros
are either real in $\{-k\}_{k>0}\cup(-1,1)$ or complex in
$\frac{1}{2}+i\bbR$. They correspond to the eigenvalues of the
Casimir operator acting on suitable functions on $\Gamma\bs G$. As
Selberg remarks, the form that these Zeta functions take is
qualitatively similar to the Riemann Zeta function. In fact more so
then the case of one upper half plane where the corresponding
definition to (\ref{e:Zeta}) doesn't have a $-1$ in the exponent
(this feature is connected with the parity of $d$). If $\Gamma$ is a
congruence group and the Ramanujan-Selberg conjecture is true then
$Z_m(s,\Gamma)$ satisfies the ``Riemann hypothesis'', that is all
its non trivial zeros are on $\Re(s)=\frac{1}{2}$.

As a corollary of Theorem \ref{t:gap} we get a zero free region that
holds for all (but finitely many) of these Zeta functions.
\begin{cor}\label{c:zeta}
Given $t_0>\frac{5}{6}$ there is $m_0(\Gamma)$ such that
$Z_m(s,\Gamma)$ has no zeros in $\Re(s)>t_0$, for $m\geq
m_0(\Gamma)$.
\end{cor}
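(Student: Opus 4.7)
The plan is to combine the spectral interpretation of the non-trivial zeros of $Z_m(s,\Gamma)$ with the finiteness statement of Theorem \ref{t:gap}. First I would invoke the Selberg trace-formula derivation of (\ref{e:Zeta}) (see \cite{Selberg,MoscoviciStanton91}): each non-trivial zero $s_0\in(\tfrac12,1)$ of $Z_m(s,\Gamma)$ is produced by an irreducible summand $\pi\cong\pi_1\otimes\pi_2$ of $L^2(\Gamma\bs G)$ in which $\pi_1$ is the complementary series representation of $\PSL(2,\bbR)$ with Casimir eigenvalue $s_0(1-s_0)$, and $\pi_2$ is a discrete (holomorphic) series of $\PSL(2,\bbR)$ whose weight is singled out by the parameter $m$ in (\ref{e:Zeta}). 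The crucial feature I need from this interpretation is that the weight of the $\pi_2$ attached to a zero of $Z_m$ grows without bound as $m$ does.

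Next, since any discrete series of $\PSL(2,\bbR)$ is tempered, $p(\pi_2)=2$, while a standard matrix-coefficient computation on $\PSL(2,\bbR)$ gives $p(\pi_1)=1/(1-s_0)$ for the complementary series at $s_0\in(\tfrac12,1)$; because matrix coefficients on the product factor as a product, $p(\pi)=\max(p(\pi_1),p(\pi_2))=1/(1-s_0)$. Given $t_0>\tfrac56$, I would choose $\alpha>0$ with $6+\alpha<1/(1-t_0)$, which is possible since $t_0>\tfrac56$ forces $1/(1-t_0)>6$. Theorem \ref{t:gap} (applied with $\rho$ trivial) then produces a finite exceptional set $\calE=\{\pi\subset L^2(\Gamma\bs G):p(\pi)\geq 6+\alpha\}$. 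Each nontempered $\pi\in\calE$ attaches a single fixed weight to its discrete-series factor $\pi_2$, so the collection of such weights is a finite set of integers. Taking $m_0$ larger than all of them, the spectral interpretation in the first step guarantees that for $m\geq m_0$ no element of $\calE$ can contribute a zero to $Z_m$. The remaining zeros of $Z_m$ in the strip $\tfrac12<\Re(s)<1$ therefore arise from $\pi\notin\calE$, for which $p(\pi)<6+\alpha<1/(1-t_0)$ and hence $s_0<t_0$. The ``trivial'' zeros at nonpositive integers and the zeros on $\Re(s)=\tfrac12$ lie outside $\Re(s)>t_0$, completing the argument.

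The technical heart is the spectral characterization of the first step: unwinding (\ref{e:Zeta}) via the Selberg trace formula with a test function that isolates a single discrete-series weight, and then verifying that the weight of $\pi_2$ attached to each zero of $Z_m$ indeed tends to infinity with $m$. This is what allows the finitely many exceptional representations supplied by Theorem \ref{t:gap} to be eliminated by enlarging $m$. The numerical threshold $\tfrac56$ in the corollary is dictated by the constant $6$ in Theorem \ref{t:gap} through the identity $p(\pi_1)=1/(1-s_0)$; any quantitative improvement of Theorem \ref{t:gap} would translate directly into an improved zero-free region.
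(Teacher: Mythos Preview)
Your proposal is correct and follows essentially the same route as the paper's own proof in Section~5: both arguments use the spectral interpretation of the nontrivial zeros of $Z_m(s,\Gamma)$ as the parameters $s_k$ for which $\pi_{s_k}\otimes\frakD_m$ occurs in $L^2(\Gamma\bs G)$, translate the condition $s_k>t_0>\tfrac56$ into $p(\pi_k)>6+\alpha$, and then invoke Theorem~\ref{t:gap} to conclude that only finitely many weights $m$ can carry such a zero. Your write-up is in fact slightly more explicit than the paper's about the logic of choosing $\alpha$ and bounding the finitely many exceptional weights; the only minor imprecision is that the spectral zeros of $Z_m$ correspond to weight exactly $m$ (as the paper states), which is stronger than the ``weight grows without bound with $m$'' phrasing you use, though either version suffices.
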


We now outline the main ideas of the proof of Theorem \ref{t:gap} (for the case $d=2$).
As mentioned above it is sufficient
to give a proof for $\Gamma$ a lattice derived from a quaternion
algebra, $\calA$, defined over a number field $L$ and an arbitrary
unitary representation $\rho$ of $\Gamma$. What we will actually
show, is that if a representation $\pi\cong\pi_1\otimes\pi_2$ occurs
in the decomposition of $L^2(\Gamma\bs G,\rho)$ with $p(\pi)$
sufficiently large, then all the spectral parameters of $\pi$  are
bounded.

We assume that $\pi\cong\pi_1\otimes\pi_2$ occurs in the
decomposition with say $p(\pi_1)>6$ and the other spectral parameter
large and get a contradiction: From our assumption $\pi_1\cong
\pi_{s_1}$ is complementary with
$|s_1-\frac{1}{2}|\in(\frac{1}{3},\frac{1}{2})$ and the second
factor is either principal series $\pi_2\cong\pi_{s_2}$ with
$s_2=\frac{1}{2}+ir_2,\; r_2\in[T,2T]$, or discrete series
$\pi_2\cong\frakD_{m}$ with weight $m\in[T,2T]$ for some large $T$.
Let $g_1,g_2 \in C^\infty(\bbR)$ be smooth even real valued
compactly supported functions such that their Fourier transforms
$h_j=\hat{g}_j$ are positive on $\bbR\cup i\bbR$. Further assume
that $h_2$ vanishes at zero to a large order (for the discrete
series case instead of $h_2$ we will use $\psi\in C^\infty(\bbR)$
that is smooth, positive and compactly supported away from zero).
For $T$ large and $R=c\log(T)$ we have
$h_1(Rr_1)h_2(\frac{r_2}{T})\gg \frac{T^{c|1/2-s_1|}}{\log(T)}$
(equivalently in the second case the same bound holds for
$h_1(Rr_1)\psi(\frac{m}{T})$).  From the positivity assumption, this
lower bound also holds when summing over all representation in the
decomposition (in the second case we also sum over all weights
$m\in\bbZ$). For the full sum we can also give an upper bound of
order $O_\epsilon(T^2+T^{c/2+\epsilon-1})$. For $c=6-2\epsilon$ and
$T$ sufficiently large the upper bound is already smaller then the
lower bound excluding the existence of such a representation in the
decomposition.

\begin{rem}\label{r:vanish}
When summing over all representation the trivial representation
$r_0=(i/2,i/2)$ also contributes. If $h_2$ (equivalently $\psi$) did
not vanish at zero then the trivial representation would contribute
$\sim T^{c/2}$ which is already larger then the lower bound coming
from the representation we wish to exclude. Hence, in order for this
strategy to have any chance of working we must make the function
$h_2$ vanish at zero (or respectively take $\psi$ supported away
from zero).
\end{rem}

To obtain the upper bound for the full sum we use the Selberg Trace
formula to transform the spectral sum to a sum over the conjugacy
classes (when summing over the weights we also use Poisson
summation). We then bound each summand by its absolute value. (Even
though the summands here are not positive, it turns out that the
oscillations are sufficiently slow so that we apparently don't lose
to much.) After some standard manipulation, using the fact that the
test functions are compactly supported, estimating the sum over the
conjugacy classes amounts to two counting arguments. The first is an
estimate on the number of algebraic integers in $L$ (viewed as a
lattice in $\bbR^n$) that lie inside a long and narrow rectangular
box whose sides are parallel to the coordinates axes. Using a simple
Dirichlet box principle argument we bound the number of such lattice
points by the volume of the box. The second counting problem is
counting the number of conjugacy classes in $\Gamma$ with a given
trace, which amounts to estimating the number of optimal embeddings
of certain orders into the quaternion algebra. This in turn is
translated (via the work of Eichler) to estimates of class numbers
of quadratic extensions of the number field $L$, that we obtain
using Dirichlet's class number formula.

\subsection*{Acknowledgements} We thank A.Gamburd and T.Venkataramana
for discussions about various aspects of the paper.

\section{Background and Notation}\label{s:Note}
In this section we go over some necessary background on lattices
$\Gamma$ in $G=\PSL(2,\bbR)^d$, on the spectral decomposition of
$L^2(\Gamma\bs G)$ and the Selberg trace formula.

\subsection{Irreducible lattices}
A discrete subgroup $\Gamma\subset G=\PSL(2,\bbR)^d$ is called a
lattice if the quotient $\Gamma\bs G$ has finite volume, and
co-compact when $\Gamma\bs G$ is compact. We say that a lattice
$\Gamma\subset G$ is irreducible, if for every (non-central) normal
subgroup $N\subset G$ the projection of $\Gamma$ to $G/N$ is dense.
An equivalent condition for irreducibility is that for any
nontrivial $1\neq\gamma\in\Gamma$, none of the projections
$\gamma_j\in G_j$ are trivial \cite[Theorem 2]{Shimizu}. Examples of
irreducible lattices can be constructed from norm one elements of
orders in a quaternion algebra (see below).

Recall that a nontrivial element $g\in\PSL(2,\bbR)$ is called
hyperbolic if $|\Tr(g)|>2$, elliptic if $|\Tr(g)|<2$, and parabolic
if $|\Tr(g)|=2$. For any nontrivial $1\neq\gamma\in\Gamma$, the
projections to the different factors are either hyperbolic or
elliptic. The irreducibility implies that there are no trivial
projections and since we assume $\Gamma$ is co-compact there are no
parabolic projections. There are purely hyperbolic elements (where
all projections are hyperbolic), and mixed elements (where some
projections are hyperbolic and other are elliptic). There could also
be a finite number of torsion points that are purely elliptic (see
e.g., \cite{Efrat87,Shimizu}).

\subsection{Lattices derived from quaternion algebras}\label{s:QuatAlg}
Let $L$ be a totally real number field and denote by
$\iota_1,\ldots,\iota_n$ the different embeddings of $L$ into
$\bbR$. Let $\calA$ be a quaternion algebra over $L$, ramified in
all but $d$ of the real places (say $\iota_1,\ldots,\iota_d$). That
is we have that $\calA\otimes_{\iota_j(L)}\bbR\cong\Mat_2(\bbR)$ for
$j\leq d$ and it is isomorphic to the standard Hamilton's
quaternions for $j>d$. Let $\calR$ be a maximal order inside
$\calA$, and denote by $\calR^1$ the group of (relative) norm one
elements inside this order. The image $\iota_j(\calR^1)\subset
\SL(2,\bbR)$ for $j\leq d$ and $\iota_j(\calR^1)\subseteq SU(2)$ for
$j>d$. The group
$\Gamma(\calR)=\set{(\iota_1(\alpha),\ldots,\iota_d(\alpha))\in\PSL(2,
\bbR)^d|\alpha\in
\calR^1}$ is a lattice inside $\PSL(2,\bbR)^d$ and it is co-compact
unless $n=d$ and $\calA=\Mat(2,L)$ (see \cite{Shavel76,Shimizu}).
Margulis's arithmeticity theorem \cite[Chapter IX]{Margulis91}
together with Weil's classification of arithmetic lattices
\cite{Weil60} implies that, up to commensurability, these are the
only examples of irreducible co-compact lattices in
$\PSL(2,\bbR)^d,\;d\geq 2$.

\subsection{Spectral decomposition}
Let $\Gamma$ be an irreducible co-compact lattice in $G$ and let
$\rho$ be a finite dimensional unitary representation of $\Gamma$.
The space $L^2(\Gamma\bs G,\rho)$ is the space of Lebesgue
measurable vector valued functions on $G$ satisfying that $f(\gamma
g)=\rho(\gamma)f(g)$ and that $\int_{\Gamma\bs G}|f(g)|^2dg\leq
\infty$. The group $G$ acts on $L^2(\Gamma\bs G,\rho)$ by right
multiplication and we can decompose it into irreducible
representations
\[L^2(\G\bs G,\rho)\cong \bigoplus \pi_k.\]
Any irreducible unitary representation $\pi_k$ is a product
$\pi_k\cong \pi_{k,1}\otimes\pi_{k,2}\ldots\otimes \pi_{k,d}$ where
the $\pi_{k,j}$'s are irreducible unitary representations of
$\PSL(2,\bbR)$. We briefly recall the classification of these
representations. Other then the trivial representation the
irreducible representations of $\PSL(2,\bbR)$ are either principal
series $\pi_{s},\;s\in\frac{1}{2}+i\bbR$, complementary series
$\pi_s,\;s\in (0,1)$, or discrete series $\frakD_{m},\;m\in\bbZ$.
The discrete and principal series are both tempered, while the
complementary series is non-tempered with
$p(\pi_s)=\max\set{\frac{1}{s},\frac{1}{1-s}}$. For a representation
$\pi_k\cong \pi_{k,1}\otimes\pi_{k,2}\ldots\otimes \pi_{k,d}$ of $G$
we have that $p(\pi_k)=\max_j p(\pi_{k,j})$.

\subsection{The Selberg Trace Formula}\label{s:trace}
The Selberg trace formula relates the spectral decomposition of
$L^2(\Gamma\bs G,\rho)$, to the conjugacy classes in $\Gamma$. We
refer to \cite[Sections 1-6]{Efrat87}, \cite[Chapter 3]{Hejhal76}
and \cite{Selberg} for the full derivation of the trace formula in
this setting.

Fix a wight $m\in \bbZ^d$. For simplicity, we assume that $m_j=0$
for $j\leq d_0$ and that $|m_j|>1$ for $j>d_0$. Denote by
$L^2(\Gamma\bs G,\rho,m)$ the subspace of $L^2(\Gamma\bs G,\rho)$
such that $\pi\cong \pi_1\otimes\pi_2\cdots\otimes\pi_d$ occurs in
the decomposition if and only if $\pi_j$ is principal or
complementary series for $j\leq d_0$ and $\pi_j\cong \frakD_{m_j}$
for $j>d_0$. Consider the decomposition
 \[L^2(\Gamma\bs G,\rho,m)\cong\bigoplus_{k=0}^\infty \pi_k,\]
into irreducible representations. For any $j\leq d_0$ let
$s_{k,j}=\frac{1}{2}+ir_{k,j}$ such that $\pi_{k,j}=\pi_{s_{k,j}}$.
For any $j\leq d_0$ let $g_j\in C^\infty(\bbR)$ be a smooth even
real valued compactly supported function, and let $h_j=\hat g_j$ be
its Fourier transform.  Recall that for any $\gamma\in\Gamma$ its
projections to the different factors are either hyperbolic,
$\gamma_j\sim \begin{pmatrix} N(\gamma_j)^{1/2} & 0\\ 0 &
N(\gamma_j)^{-1/2}\end{pmatrix}$ with $N(\gamma_j)=e^{l_j}>1$, or elliptic
$\gamma_j\sim \begin{pmatrix} \epsilon(\gamma_j) & 0\\
0 & \epsilon(\gamma_j) \end{pmatrix}$ with
$\epsilon(\gamma_j)=e^{i\theta_j}\in S^1$. Define the functions
$\tilde{h}_j(\gamma_j)$ by
\[\tilde{h}_j(\gamma_j)=\frac{g(l_{j})}{\sinh(l_{j}/2)},\]
when $\gamma_j$ is hyperbolic, and
\[\tilde{h}_j(\gamma_j)=\frac{1}{\sin\theta_j}\int_{-\infty}^{\infty}
\frac{\cosh[(\pi-2\theta_j)r]}{\cosh(\pi r)}h(r)dr\] when $\gamma_j$
is elliptic. The Selberg trace formula, applied to the product
$h(r)=\prod_{j\leq d_0} h_j(r_j)$, then takes the form
\begin{eqnarray*}
\lefteqn{\sum_k h(r_k)=}\\
&&\frac{\vol(\Gamma\bs G)\chi_\rho(1)}{(4\pi)^d}\prod_{j\leq
d_0}\left(\int_{\bbR} h_j(r_j)r_j\tanh(\pi
r_j)dr_j\right)\prod_{j>d_0}(2|m_j|-1)\\
&&+\sum_{\{\gamma\}}\vol(\Gamma_\gamma\bs
G_\gamma)\chi_\rho(\gamma)\prod_{j\leq d_0}
\tilde{h}_j(\gamma_j)\prod_{j>d_0}\frac{e^{\pm
2i|m_j|\theta_j}}{1-e^{\pm 2i\theta_j}},
\end{eqnarray*}
where the sum on the right hand side is over all $\Gamma$-conjugacy
classes $\{\gamma\}\in\Gamma^\sharp$ that are elliptic for $j>d_0$,
where $G_\gamma$ denotes the centralizer of $\gamma$ in $G$ and
$\Gamma_\gamma=G_\gamma\cap \Gamma$,
$\chi_\rho(\gamma)=\Tr(\rho(\gamma))$ is the character of the
representation, and the $\pm$ signs are determined by the signs of
the $m_j$'s.

\section{Proof of Theorem \ref{t:generic}}
In this section we give the proof of Theorem \ref{t:generic}. We
consider $\SU(2)\times\SU(2)$ as a deformation space for lattices
$\G$ in $G=\SL(2,\bbR)\times \SU(2)$. We construct a dense set of
irreducible lattices inside this deformation space, each satisfying
that $E(\G\bs G)$ is infinite, and then use these to show that the
same is true generically.

\subsection{Deformation space}
Let $\G$ be an irreducible lattice in $G=\SL(2,\bbR)\times \SU(2)$.
The projection $P_1$ of $\G$ onto the first factor has image
$\Lambda$ which is a lattice in $\SL(2,\bbR)$. For the purpose of
constructing lattices $\G$ in $G$ with $|E(\G\bs G)|=\infty$, we
assume that $P_1:\G\to \Lambda$ is an isomorphism. In this way we
can identify
$$\G=\set{(\gamma,\rho(\gamma)):\gamma\in\Lambda},$$
where $\rho=P_1\circ P_2^{-1}:\Lambda\hookrightarrow\SU(2)$. For
$\Lambda$ we take the congruence subgroup $\G(2)$ of $\SL(2,\bbR)$
which is a free group on two generators $A=\begin{pmatrix} 1 & 2\\ 0
& 1\end{pmatrix}$, $B=\begin{pmatrix} 1 & 0\\ 2 & 1\end{pmatrix}$.
Our deformation space of such lattices can then be described as
$\SU(2)\times\SU(2)$ where for any $u=(u_1,u_2)\in
\SU(2)\times\SU(2)$ define $\rho_u$ by
$\rho_u(A)=u_1,\;\rho_u(B)=u_2$ extended to a homomorphism of
$\Lambda$ into $\SU(2)$ and let $\G_u=(\Lambda,\rho_u)$. One can
further identify such lattices in $G$ which are conjugate in $G$ but
for our analysis there is no need to do so. For any $n\geq 3$ we
choose $u_1,u_2$ so that $u_1^n=u_2^n=1$ and satisfy no further
relations (that is the corresponding image $\rho_u(\Lambda)$ is
isomorphic to the free product $(\bbZ/n\bbZ)*(\bbZ/n\bbZ)$). Varying
over all such $\rho_u$ and all $n>3$ yields a dense subset in our
deformation space. Note that for any such choice of $u$ the image
$\rho_u(\Lambda)$ is dense in $\SU(2)$, that is,
$\G_u=(\Lambda,\rho_u)$ is irreducible. We will now show that for
such a lattice we have $|E(\G_u\bs G)|=\infty$.
\begin{thm}\label{t:Einf2}
For any homomorphism $\rho_u:\Lambda\to\SU(2)$ as above the
corresponding lattice $\G_u=(\Lambda,\rho_u)$  satisfies $|E(\G_u\bs
G)|=\infty$.
\end{thm}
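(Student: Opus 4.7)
The plan is to decompose $L^2(\Gamma_u\bs G)$ according to the right action of the compact factor $\SU(2)$. Because $\Gamma_u$ is the graph of $\rho_u$, Peter--Weyl gives the $G$-equivariant decomposition
\[
L^2(\Gamma_u\bs G)\cong\bigoplus_{\sigma\in\widehat{\SU(2)}}L^2(\Lambda\bs\SL(2,\bbR),\sigma\circ\rho_u)\otimes V_\sigma^{*},
\]
where $L^2(\Lambda\bs\SL(2,\bbR),\sigma\circ\rho_u)$ denotes the space of $L^2$ sections of the unitary local system on $\Lambda\bs\bbH$ attached to the twist $\sigma\circ\rho_u$. Every irreducible $G$-subrepresentation that appears has the form $\pi_1\otimes\sigma$, and since matrix coefficients factor and $\SU(2)$ is compact one has $p(\pi_1\otimes\sigma)=p(\pi_1)$. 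Consequently $|E(\Gamma_u\bs G)|=\infty$ is equivalent to the existence of infinitely many distinct complementary series parameters $s\in(0,1)\setminus\{\tfrac12\}$ for which $\pi_s$ occurs in some twisted space $L^2(\Lambda\bs\SL(2,\bbR),\sigma\circ\rho_u)$; equivalently, the set $\bigcup_{\sigma}\operatorname{spec}(\Delta_{\sigma\circ\rho_u})\cap(0,\tfrac14)$ must be infinite.

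The essential input is that $\rho_u(\Lambda)\cong(\bbZ/n\bbZ)\ast(\bbZ/n\bbZ)$ is a non-amenable, infinite, finitely generated group (for $n\geq 3$) that is nevertheless dense in $\SU(2)$. Density forces $V_\sigma^{\rho_u(\Lambda)}=0$ for every nontrivial $\sigma$, so the trivial $\SL(2,\bbR)$-representation never contributes to the $\sigma$-isotypic piece, and the question is really about honest complementary series. On the other hand the twists are in no sense arithmetic, since the only relations imposed in $\rho_u(\Lambda)$ are $u_1^n=u_2^n=1$, and following the strategy of \cite{LubotzkyPhillipsSarnak87} we plan to exploit this free-product structure directly. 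For each $\sigma$ of sufficiently large dimension we construct an explicit equivariant trial section $f_\sigma$ of the $V_\sigma$-bundle over $\Lambda\bs\bbH$ by spreading a carefully chosen vector $v_\sigma\in V_\sigma$, adapted to the decomposition of $V_\sigma$ under the finite subgroups $\langle u_1\rangle$ and $\langle u_2\rangle$, along a tubular neighborhood of a fixed closed geodesic using a slowly varying bump function. Estimating the Rayleigh quotient and showing that it is strictly less than $\tfrac14$ yields, via the min-max principle, an exceptional eigenvalue $\lambda_\sigma\in(0,\tfrac14)$ of $\Delta_{\sigma\circ\rho_u}$.

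To conclude $|E(\Gamma_u\bs G)|=\infty$ one must produce infinitely many \emph{distinct} such eigenvalues. The Rayleigh quotient obtained from the construction depends quantitatively on the spectra of $\sigma(u_1)$ and $\sigma(u_2)$, which vary nontrivially with $\dim V_\sigma$, so a cofinal family of $\sigma$'s will yield infinitely many distinct values $\lambda_\sigma$. Alternatively, applying the Selberg trace formula to each twist and combining the resulting spectral identities with Weyl-law type bounds on the multiplicity of any fixed eigenvalue $\lambda$ across $\sigma$ rules out the accumulation of $\bigcup_\sigma\operatorname{spec}(\Delta_{\sigma\circ\rho_u})\cap(0,\tfrac14)$ on a finite set. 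The principal obstacle is the analytic control of the Rayleigh quotient, which requires precise estimates on the norms of words in $\rho_u(A)$ and $\rho_u(B)$ acting on $V_\sigma$; this is exactly the non-Ramanujan input for generic non-arithmetic unitary representations of a free product, and it is where the borrowed LPS technique does the essential work.
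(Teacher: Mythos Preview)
Your setup via Peter--Weyl is correct, and the reduction to producing infinitely many distinct values in $\bigcup_\sigma\operatorname{spec}(\Delta_{\sigma\circ\rho_u})\cap(0,\tfrac14)$ matches the paper. But the core of your argument has a genuine gap. A Rayleigh-quotient construction, even if the estimate were carried out (and you have not carried it out), yields only an \emph{upper bound} $\lambda_0(\sigma)\le R_\sigma$ for the bottom eigenvalue of the $\sigma$-twist; the observation that the bounds $R_\sigma$ vary with $\dim V_\sigma$ says nothing about whether the actual eigenvalues $\lambda_0(\sigma)$ are distinct. They could all coincide at a single $c<\tfrac14$, and nothing in your outline rules this out. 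Your alternative via ``Weyl-law type bounds on the multiplicity of any fixed eigenvalue $\lambda$ across $\sigma$'' is not a known mechanism: the Weyl law for each twist counts eigenvalues below a large threshold and is silent about whether a fixed small $\lambda$ recurs across many $\sigma$'s.

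The paper's argument is quite different and sidesteps this problem by passing to a limiting object. Set $L=\ker\rho_u$; then $\Lambda/L\cong(\bbZ/n\bbZ)\ast(\bbZ/n\bbZ)$ is non-amenable and $A^n\in L$ is parabolic, so by Brooks and Elstrodt--Patterson one has $0<\lambda_0(L\bs\bbH)<\tfrac14$, and since $L\lhd\Lambda$ with infinite quotient there is no $L^2$ ground state, forcing $\lambda_0(L\bs\bbH)$ to be an \emph{accumulation point} of $\operatorname{spec}(\Delta_{L\bs\bbH})$. The technique borrowed from \cite{LubotzkyPhillipsSarnak87} is not a trial-section construction at all: it is the character limit $\tfrac{1}{l+1}\chi_l(g)\to 1_{g=1}$ on $\SU(2)$, which makes $\tfrac{1}{l+1}\sum_{\gamma\in\Lambda}k(\gamma z,w)\chi_l(\rho_u(\gamma))\to\sum_{\gamma\in L}k(\gamma z,w)$ for any point-pair invariant $k$. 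This gives weak convergence of the $\sigma_l$-spectral measures to the spectral measure of $\psi$ on $L^2(L\bs\bbH)$; since the latter can be arranged to have mass in any short interval $J\subset(\lambda_0,\lambda_0+\varepsilon)$, a sign-controlled test function forces an eigenvalue of $\Delta_{\sigma_l\circ\rho_u}$ into $J$ for all large $l$. Shrinking $\varepsilon$ produces infinitely many distinct eigenvalues accumulating at $\lambda_0(L\bs\bbH)$. The distinctness comes for free from the accumulation structure of the limiting spectrum --- exactly the ingredient your outline lacks.
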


\subsection{Spectral theory for infinite volume quotients of $\bbH$}
For the proof of Theorem \ref{t:Einf2} we will make a reduction to
the spectral theory of $L^2(L\bs\bbH)$ with $\bbH$ the upper half
plane and $L=\ker\rho_u$ acting by linear fractional
transformations. Before proceeding with the proof we review some
facts on the spectral theory of these infinite volume hyperbolic
surfaces that we will need (we refer to \cite{Sullivan87} for
details). Let $L$ be a torsion free discrete subgroup of
$\SL(2,\bbR)$. Then $L\bs \bbH$ is a complete hyperbolic surface and
the Laplacian on smooth functions of compact support on $L\bs \bbH$
has a unique self adjoint extension denoted by $\lap$. Let
$\lambda_0(L\bs\bbH)$ denote the bottom of the spectrum of $\lap$ so
that the spectrum is contained in $[\lambda_0,\infty)$. Closely
related to $\lambda_0(L\bs \bbH)$ is the exponent of convergence
$\delta(L)\in [0,1]$ (see \cite[page 333]{Sullivan87} for
definition). When $L$ is nonelementary and contains a parabolic
element this exponent $\delta(L)>\frac{1}{2}$ \cite[Theorem
7]{Beardon68}, in which case the Elstrodt-Patterson Theorem
\cite[Theorem 2.17]{Sullivan87} says that
$\lambda_0(L\bs\bbH)=\delta(L)(1-\delta(L))$, and in particular
$\lambda_0(L\bs\bbH)<1/4$.

We shall be interested in the case where $L$ is a normal subgroup of
$\Lambda$ and $\Lambda/L$ is not amenable. In this case Brooks
\cite{Brooks85} shows  that $\lambda_0(L\bs\bbH)>0$. Summarizing the
above remarks we have
\begin{prop}
Let $L$ satisfy that $\Lambda/L$ is not amenable and $\delta(L)>1/2$
then $0<\lambda_0(L\bs\bbH)<1/4$ and $\lambda_0$ is an accumulation
point of distinct points of the spectrum of $\lap$ on $L^2(L\bs
\bbH)$.
\end{prop}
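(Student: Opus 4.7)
The two inequalities $0<\lambda_0(L\bs\bbH)<1/4$ are direct consequences of the summary preceding the proposition. Since $\delta(L)>1/2$ the Elstrodt--Patterson theorem gives $\lambda_0(L\bs\bbH)=\delta(L)(1-\delta(L))$, which is strictly less than $1/4$ because $\delta(L)\neq 1/2$; and since $\Lambda\bs\bbH$ has finite volume we have $\lambda_0(\Lambda\bs\bbH)=0$, so Brooks's theorem combined with the non-amenability of $\Lambda/L$ forces $\lambda_0(L\bs\bbH)>0$.

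For the accumulation assertion, my plan is to exploit the isometric action of the deck group $\Gamma:=\Lambda/L$ on $L\bs\bbH$, which commutes with $\lap$. Decomposing $L^2(L\bs\bbH)$ under $\Gamma$, each finite-dimensional irreducible unitary representation $\sigma$ of $\Gamma$ occurring in the decomposition contributes a subspace that is naturally identified with $L^2$-sections of the flat Hermitian bundle $\calE_\sigma\to\Lambda\bs\bbH$. By the Selberg theory on the finite-volume surface $\Lambda\bs\bbH$, whenever the bottom $\lambda_0(\sigma)$ of the twisted Laplacian on $\calE_\sigma$ lies below $1/4$ it is a genuine eigenvalue of $\lap$ on $L^2(L\bs\bbH)$, and one has $\lambda_0(L\bs\bbH)=\inf_\sigma\lambda_0(\sigma)$ as $\sigma$ ranges over the irreducibles of $\Gamma$ occurring in the decomposition.

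In the setting of Theorem~\ref{t:Einf2} (the only setting where the proposition will be applied), the density of $\Gamma=\rho_u(\Lambda)$ in $\SU(2)$ gives an infinite family $\{\sigma_n\}$ of pairwise inequivalent finite-dimensional irreducibles of $\Gamma$ (restrictions of the $(n+1)$-dimensional $\SU(2)$-irreducibles) whose matrix coefficients tend uniformly to $1$ on any compact subset of $\SU(2)$, so that $\sigma_n$ tends to the trivial representation in Fell's topology. A Brooks-type continuity argument for the bottom of spectrum under Fell-deformation of the monodromy then forces $\lambda_0(\sigma_n)\to\lambda_0(L\bs\bbH)$; explicit quasi-modes in $L^2(\Lambda\bs\bbH,\calE_{\sigma_n})$ obtained by averaging the Patterson--Sullivan eigenfunction $\phi_0$ of $L\bs\bbH$ against matrix coefficients of $\sigma_n$ produce Rayleigh-quotient upper bounds of the form $\lambda_0(\sigma_n)\leq \lambda_0(L\bs\bbH)+o(1)$. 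The main obstacle will be making this Brooks-type continuity quantitative and showing that the resulting values $\lambda_0(\sigma_n)$ are pairwise distinct for infinitely many $n$; the latter at worst requires passing to a subsequence, after which one obtains the desired infinite sequence of distinct spectral points of $\lap$ on $L^2(L\bs\bbH)$ accumulating at $\lambda_0$.
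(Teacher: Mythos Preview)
Your derivation of $0<\lambda_0<1/4$ is fine. For the accumulation statement, however, you have missed a short and complete argument, and your proposed route has real gaps.

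The paper's proof is essentially five lines: one shows that $\lambda_0$ is \emph{not} an $L^2$-eigenvalue, which forces it to be a non-isolated point of the spectrum. The key input is Sullivan's uniqueness (up to scalar) of any $L^2$-eigenfunction at the bottom of the spectrum. Since $L\triangleleft\Lambda$, for each $\gamma\in\Lambda$ the translate $\phi(\gamma z)$ is again a $\lambda_0$-eigenfunction in $L^2(L\bs\bbH)$, so uniqueness forces $\phi$ to be $\Lambda$-invariant. But $[\Lambda:L]=\infty$, so a $\Lambda$-invariant function cannot be square-integrable on $L\bs\bbH$. Hence there is no $L^2$ ground state, and $\lambda_0$ is an accumulation point of the spectrum. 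This works in the full generality of the proposition.

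Your approach, by contrast, only addresses the particular $L$ arising in Theorem~\ref{t:Einf2}, and the obstacles you yourself flag are genuine. The Fell-continuity heuristic gives at best an upper bound $\limsup\lambda_0(\sigma_n)\le\lambda_0(L\bs\bbH)$, not equality, and you have no mechanism to ensure infinitely many of the $\lambda_0(\sigma_n)$ are pairwise distinct (or even strictly greater than $\lambda_0$). Your plan to build quasi-modes by averaging ``the Patterson--Sullivan eigenfunction $\phi_0$ of $L\bs\bbH$'' is also problematic: the whole point of the paper's argument is that $\phi_0\notin L^2(L\bs\bbH)$, so there is no reason the averages against matrix coefficients land in $L^2(\Lambda\bs\bbH,\calE_{\sigma_n})$. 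Finally, the decomposition of $L^2(L\bs\bbH)$ under the non-amenable deck group $\Gamma=\Lambda/L$ into finite-dimensional isotypes is not automatic and would itself require justification.
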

\begin{proof}
From the above remarks it is clear that $0<\lambda_0(L\bs\bbH)<1/4$.
We will show that there is no eigenfunction in $L^2(L\bs\bbH)$ with
eigenvalue $\lambda_0$ implying that $\lambda_0$ cannot be an
isolated point in the spectrum. We recall that if an eigenfunction
$\phi\in L^2(L\bs \bbH)$ with eigenvalue $\lambda_0$ exists then it
is unique up to a nonzero scalar multiple \cite[Corollary
2.9]{Sullivan87}. On the other hand, as $L$ is normal in $\Lambda$,
for any $\gamma\in \Lambda$ the function $\phi(\gamma z)\in L^2(L\bs
\bbH) $ is also a $\lambda_0$-eigenfunction. Consequently, we must
have $\phi( z)=\phi(\gamma z)$ for all $\gamma\in \Lambda$ and since
$\Lambda/L$ is infinite then $\phi$ can not be in $L^2(L\bs\bbH)$.
\end{proof}
\begin{rem}
The situation here is very different from the case of geometrically
finite quotients where Lax and Phillips \cite{LaxPhillips82} showed
that the point spectrum is finite. Indeed, we recall that a finitely
generated normal subgroup of a free group is always of finite index
\cite{KarrassSolitar57}. Hence, the assumption that $L$ is a normal
subgroup with infinite index in $\Lambda$ implies that $L$ must be
infinitely generated and in particular not geometrically finite.
\end{rem}

\subsection{Construction of nontempered points}
Fix $n>3$ and a homomorphism  $\rho:\Lambda\to \SU(2)$ with
$\rho(\Lambda)\cong (\bbZ/n\bbZ)*(\bbZ/n\bbZ)$ such that
$\rho(A)^n=\rho(B)^n=1$. The kernel $L=\ker(\rho)$ is normal in
$\Lambda$ and $\Lambda/L\cong (\bbZ/n\bbZ)*(\bbZ/n\bbZ)$ is
infinite (and not amenable). Also $A^n\in L$ is parabolic so
$\delta(L)>\frac{1}{2}$
and hence $0<\lambda_0(L\bs \bbH)<\frac{1}{4}$ is an accumulation
point of distinct points in the spectrum.

Now, for $l\geq 0$ let $\sigma_l=\mathrm{sym}^l$ denote the $l+1$
dimensional irreducible representation of $\SU(2)$. According to
Weyl $L^2(\SU(2))$ decomposes under the regular representation as
\[L^2(\SU(2))=\bigoplus_{l=0}^\infty (\dim\sigma_l)W_l,\]
where $W_l\cong\sigma_l$. Correspondingly the regular representation
of $G$ on $L^2(\G\bs G)$ decomposes into the representations
$L^2(\Lambda\bs \SL(2,\bbR),\sigma_l\circ\rho)$ each occurring with
multiplicity $l+1$. Here
\begin{eqnarray}\label{e:sigma}
\lefteqn{L^2(\Lambda\bs \SL(2,\bbR),\sigma_l\circ\rho)=}\\
\nonumber &&\set{F:\SL(2,\bbR)\to
\bbC^{l+1}|F(\gamma g)=\sigma_l(\rho(\gamma))F(g),\;\gamma \in \Lambda},
\end{eqnarray}
with the right action of $\SL(2,\bbR)$ (we may normalize so that
$\sigma_l$ acts unitarily on $\bbC^{l+1}$ with respect to the
standard inner product). Since we are only interested in
representations $\pi$ of $\SL(2,\bbR)$ appearing in (\ref{e:sigma})
which are nontempered, we may restrict to $\pi$'s which are
spherical.

Denote by $L^2(\Lambda\bs\bbH,\sigma_l\circ\rho)$ the space of
square integrable vector valued functions on the upper half plane
$\bbH$, satisfying $F(\gamma z)=\sigma_l(\rho(\gamma))F(z)$, where
$\gamma$ acts on $z\in\bbH$ by fractional linear transformations.
This space is naturally identified with the space of spherical
vectors in $L^2(\Lambda\bs\SL(2,\bbR),\sigma_l\circ\rho)$. Let
$F_{1,l},F_{2,l},\ldots$ in $L^2(\Lambda\bs\bbH,\sigma_l\circ\rho)$
be an orthonormal basis of eigenvectors of $\lap$ with eigenvalues
$\lambda_{j,l}=\frac{1}{4}+t^2_{j,l}$ giving the discrete spectrum
and $E(z,\frac{1}{2}+it),\;t\in\bbR$ spanning the (tempered)
continuous spectrum. Note that if $\lap
F_{j,l}+\lambda_{j,l}F_{j,l}=0$ with
$\lambda_{j,l}=s_{j,l}(1-s_{j,l})<\frac{1}{4}$ then there is a
nontempered representation $\pi$ appearing in $L^2(\Gamma\bs G)$
with $p(\pi)=\frac{1}{1-s_{j,l}}$. Hence showing that $|E(\G\bs
G)|=\infty$ is equivalent to showing that there are infinitely many
distinct eigenvalues $\lambda_{j,l}$ below $1/4$. The following
proposition then concludes the proof of Theorem \ref{t:Einf2}.
\begin{prop}
With the above notations, there are infinitely many eigenvalues
$\lambda_0(L\bs\bbH)<\lambda_{j,l}<\frac{1}{4}$ accumulating at
$\lambda_0(L\bs\bbH)$.
\end{prop}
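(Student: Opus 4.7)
The plan is to combine the previous proposition (infinitely many $L^2$-eigenvalues of $\Delta$ on $L\bs\bbH$ accumulating at $\lambda_0(L\bs\bbH)$) with the Peter--Weyl decomposition of $L^2(\SU(2))$ to exhibit infinitely many small eigenvalues across the section spaces $L^2(\Lambda\bs\bbH,\sigma_l\circ\rho)$. By Peter--Weyl applied to the $\SU(2)$-factor, the $K_1$-fixed part of $L^2(\Gamma\bs G)\cong L^2(\Lambda\bs(\bbH\times\SU(2)))$ decomposes as $\bigoplus_l (l+1)L^2(\Lambda\bs\bbH,\sigma_l\circ\rho)$, so the proposition is equivalent to the assertion that $\bigcup_l \mathrm{spec}(\Delta\big|_{L^2(\Lambda\bs\bbH,\sigma_l\circ\rho)})$ contains infinitely many distinct points in $(\lambda_0(L\bs\bbH),\tfrac14)$ accumulating at $\lambda_0(L\bs\bbH)$.

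To produce such eigenvalues I would use a test-section/Rayleigh-quotient argument built from the Patterson--Sullivan eigenfunction $\phi_0$ on $L\bs\bbH$: by the previous proposition, $\phi_0$ is $\Lambda$-invariant, positive, and satisfies $\Delta\phi_0=\lambda_0(L\bs\bbH)\phi_0$ with $\lambda_0(L\bs\bbH)<\tfrac14$. For each $l\geq 1$, form a $\sigma_l\circ\rho$-equivariant bump section
\[
G_R(z) \;=\; \sum_{\gamma\in\Lambda}\psi_R(\gamma^{-1}z)\,\sigma_l(\rho(\gamma))\,v_0
\]
using a smooth cutoff $\psi_R$ supported in a hyperbolic ball of radius $R$, and set $F_l=\phi_0\cdot G_R\in L^2(\Lambda\bs\bbH,\sigma_l\circ\rho)$. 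The Leibniz-rule identity $\Delta F_l = \lambda_0(L\bs\bbH)F_l + \phi_0\Delta G_R + 2\nabla\phi_0\cdot\nabla G_R$ shows that the error from $\lambda_0(L\bs\bbH) F_l$ is supported near $\partial(\supp\psi_R)$, whose contribution to the Rayleigh quotient tends to zero as $R\to\infty$. By the min--max principle this yields, for all sufficiently large $l$, an eigenvalue $\lambda_0^{(l)}\leq \|\nabla F_l\|^2/\|F_l\|^2<\tfrac14$ in $L^2(\Lambda\bs\bbH,\sigma_l\circ\rho)$. Because each section space carries only finitely many eigenvalues strictly below $\tfrac14$, the values $\lambda_0^{(l)}$ (or more refined test sections indexed also by $k$) must hit infinitely many distinct $l$'s, producing infinitely many distinct $\lambda_{j,l}$ that accumulate at $\lambda_0(L\bs\bbH)$.

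The principal technical obstacle is the uniform control of the Rayleigh quotient in $l$: the boundary terms from $\psi_R$ must be bounded independently of the twist $\sigma_l\circ\rho$, and the integrability of $\phi_0^2|G_R|^2$ on $\Lambda\bs\bbH$ must be secured despite the $y^{2s_0}$-growth of $\phi_0^2$ at the cusps. The latter relies on the nontrivial action of the parabolic generator $A$ of $\Lambda=\Gamma(2)$ through the finite-order element $\sigma_l(\rho(A))$: whenever $\sigma_l(\rho(A))$ has no eigenvalue $1$, the equivariance forces $G_R$ to have vanishing zero-th Fourier coefficient at that cusp and hence to decay exponentially there, compensating the growth of $\phi_0^2$. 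Density of $\rho(\Lambda)$ in $\SU(2)$ guarantees that the hypothesis on $\sigma_l(\rho(A))$ holds for infinitely many $l$, so the construction is nontrivial and yields the required accumulation.
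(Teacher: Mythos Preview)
Your approach differs substantially from the paper's and, as written, has genuine gaps. The paper does not attempt a variational construction at all. Instead it takes a point-pair invariant $k$, forms the matrix trace of the twisted automorphic kernel $\sum_{\gamma\in\Lambda}k(\gamma z,w)\sigma_l(\rho(\gamma))$, and integrates against $\psi(z)\overline{\psi(w)}$ to obtain a positive spectral measure $\mu_l$ on $C=[0,\infty)\cup[0,\tfrac{i}{2}]$. The crucial observation is the elementary character asymptotic $\tfrac{1}{l+1}\chi_l(u)\to 1$ if $u=1$ and $\to 0$ otherwise, which collapses the $\Lambda$-sum to an $L$-sum and gives $\mu_l(h)\to\nu_\psi(h)$, the spectral measure of $\psi$ in $L^2(L\bs\bbH)$. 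The preceding proposition supplies spectrum accumulating at $\lambda_0$ (note: accumulation of the \emph{spectrum}, not ``infinitely many $L^2$-eigenvalues'' as you write --- indeed that proof shows $\lambda_0$ is \emph{not} an eigenvalue). One then chooses $\psi$ so that $\nu_\psi$ charges a short interval $I\subset(\lambda_0,\lambda_0+\epsilon)$, and builds an even test function $h$ (with compactly supported Fourier transform) which is negative outside a slightly larger interval $J$ yet has $\nu_\psi(h)>0$; this forces $\mu_l$ to meet $J$ for all large $l$. No Patterson--Sullivan function, no cusp analysis, and no uniformity in $l$ is required.

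Your Rayleigh-quotient plan runs into several difficulties you do not resolve. The inference ``vanishing zero-th Fourier coefficient $\Rightarrow$ exponential decay at the cusp'' is a feature of $\lap$-eigenfunctions (the nonzero Fourier modes satisfy a Bessel-type ODE with a decaying solution); it does not apply to an arbitrary bump section such as $G_R$, so once the support of $\psi_R$ enters the cusp neighborhoods there is no mechanism making $\phi_0 G_R$ square-integrable, nor any control of the boundary error there. Even granting $F_l\in L^2$, you need $\|\phi_0 G_R\|^2$ to dominate the error terms as $R\to\infty$ \emph{uniformly in $l$}; but the twisted average $G_R(z)=\sum_\gamma\psi_R(\gamma^{-1}z)\,\sigma_l(\rho(\gamma))v_0$ can undergo severe cancellation (the unit vectors $\sigma_l(\rho(\gamma))v_0$ spread over the sphere in $\bbC^{l+1}$), and you give no lower bound for $\|F_l\|$. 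Finally, for odd $n$ your hypothesis on $\sigma_l(\rho(A))$ fails for every $l\ge n-1$: the eigenvalues are $\zeta^{l-2i}$, $0\le i\le l$, with $\zeta$ a primitive $n$-th root of unity, and since $2$ is invertible modulo $n$ one can always solve $l-2i\equiv 0\pmod n$ in that range; thus eigenvalue $1$ is present for all large $l$, and the proposed cusp mechanism is unavailable.
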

\begin{proof}
Let $k(z,w)$ be a point pair invariant on $\bbH$ as in
\cite{Selberg65} (i.e., for any $g\in
\SL(2,\bbR),\;k(gz,gw)=k(z,w)$). We assume that for $z$ fixed
$k(z,w)$ is a continuous compactly supported function in $w$. We
have the spectral expansion for the kernel $K_{\sigma_l}(z,w)$ (see
\cite[Chapter 8, equation 4.1]{Hejhal83}) given by
\begin{eqnarray}\label{e:spec1}
K_{\sigma_l}(z,w)&=& \sum_{\gamma\in\Lambda}k(\gamma
z,w)\sigma_l\circ\rho(\gamma)\\
\nonumber &=& \sum_{j=1}^\infty h(t_{j,l})F_{j,l}(z)\overline{F_{j,l}^t(w)}\\
\nonumber &+& \int_\bbR
h(t)E(z,\frac{1}{2}+it)\overline{E^t(w,\frac{1}{2}+it)}dt,
\end{eqnarray}
where $h(s)=\int_\bbH k(i,z)y^s\frac{dxdy}{y^2}$ is the Selberg
transform of $k$. Note that for any fixed $z,w$ both sides are
$(l+1)\times(l+1)$ matrices. Taking traces of these matrices gives
\begin{eqnarray}\label{e:spec2}
 \sum_{\gamma\in\Lambda}k(\gamma
z,w)\chi_l(\rho(\gamma))&=& \sum_{j=1}^\infty
h(t_{j,l})\pr{F_{j,l}(z)}{F_{j,l}(w)}\\
\nonumber &+& \int_\bbR
h(t)\pr{E(z,\frac{1}{2}+it)}{E(w,\frac{1}{2}+it)}dt,
\end{eqnarray}
where $\chi_l$ is the character of $\sigma_l$ on $\SU(2)$ and we
denote by $\pr{}{}$ the standard inner product on $\bbC^{l+1}$.

Let $\psi(z)$ be a continuous function of compact support in $\bbH$
and integrate (\ref{e:spec2}) against $\psi(z)\overline{\psi(w)}$ to
get
\begin{eqnarray}\label{e:spec3}
 \frac{1}{l+1}\sum_{\gamma\in\Lambda}\int_\bbH\int_\bbH
\psi(z)\overline{\psi(w)}
 k(\gamma z,w)dv(z)dv(w)\chi_l(\rho_n(\gamma))\\
 \nonumber=\int_C h(t)d\mu_l(t)
\end{eqnarray}
where $\mu_l$ is the positive measure on $C=[0,\infty)\cup
[0,\frac{i}{2}]$ given by

\begin{eqnarray}\label{e:mul}
\frac{1}{l+1}\sum_{j=1}^\infty
\pr{\int_\bbH \psi(z)F_{j,l}(z)dv(z)}{\int_\bbH
\psi(z)F_{j,l}(w)dv(w)}\delta_{t_{j,l}}\\
\nonumber + \frac{1}{l+1}\pr{\int_\bbH
\psi(z)E(z,\frac{1}{2}+it)dv(z)}{\int_\bbH
\psi(w)E(w,\frac{1}{2}+it)dv(w)}dt
\end{eqnarray}
Note that for fixed $k$ the sum over $\Lambda$ on the left hand side
of \ref{e:spec3} is finite. Also as $l\to\infty$ we have
$\frac{1}{l+1}\chi_l(u)\to 1$ if
$u=1$ and tends to $0$ if $u\neq 1$. Hence, taking the limit
$l\to\infty$ in (\ref{e:spec3}) (for $k$ and $\psi$ fixed) we get that
\begin{equation}\label{e:speclim}
\mu_l(h)\to \sum_{\gamma\in L}\int_\bbH\int_\bbH
\psi(z)\overline{\psi(w)} k(\gamma z,w)dv(z)dv(w).
\end{equation}
If the function $\psi(z)$ is supported in a small ball $B$ in $\bbH$
that is contained in one fundamental domain $\calF$ for $L\bs\bbH$
then we can think of $\psi$ also as an element of
$L^2(L\bs\bbH)$. For such $\psi$ we get
\begin{equation}\label{e:speclim1}
\mu_l(h)\to \int_\calF\int_\calF \psi(z)\overline{\psi(w)}
K_L(z,w)dv(z)dv(w),
\end{equation}
where \begin{equation}\label{e:kernel}
K_L(z,w)=\sum_{\gamma\in
L}k(\gamma z,w).
\end{equation}
The function $K_L(z,w)$ is $L\times L$ invariant and gives a kernel
for a bounded self-adjoint operator on $L^2(L\bs\bbH)$. The family
of such operators (when taking different point pair invariants $k$)
is a commutative algebra that also commutes with $\lap$.
Consequently, this whole algebra can be simultaneously diagonalized
together with $\lap$. For any fixed $\psi\in L^2(L\bs \bbH)$ there
is a corresponding positive spectral measure $\nu_\psi$ on the
spectrum of $\lap$. That is, using the parameter
$t=\sqrt{\lambda-1/4}$ we have the spectral decomposition
\begin{equation}\label{e:spec}
\pr{K_L\psi}{\psi}=\int_C h(t)d\nu_\psi(t).
\end{equation}
Consequently, from (\ref{e:speclim1}) and (\ref{e:spec}) we get that
for every function $h$ which is the Selberg transform of $k$
continuous of compact support (in particular for any even function
$h$ with Fourier transform smooth of compact support) as $l\to
\infty$
\begin{equation}\label{e:lim}
\mu_l(h)\to\nu_\psi(h).
\end{equation}

Now, since the spectrum of $\lap$ on $L^2(L\bs\bbH)$ has $\lambda_0$
as an accumulation point it follows that given $\epsilon>0$ we can
find a closed nonempty subinterval $I$ of
$(\lambda_0,\lambda_0+\epsilon)$ such that the spectral projector
$P_I$ onto $I$ is nonzero. Let $f$ be a nonzero element in the image
of this projector $P_I$. One can choose a small ball $B$ in $\bbH$
which is injective in $L\bs\bbH$ and such that $f$ restricted to $B$
is a nonzero $L^2$ function. Take $\psi$ to be supported in $B$,
continuous and such that its integral over $B$ against $f$ is not
zero. Then as members in $L^2(L\bs \bbH)$ the inner product of $f$
and $\psi$ is not zero so that the support of $\nu_\psi$ meets $I$
nontrivially. Let $J\subset(\lambda_0,\lambda_0+\epsilon)$ be an
interval strictly containing $I$ and let $h$ be an even function
with Fourier transform compactly supported that is negative outside
$J$ and satisfies that $\nu_\psi(h)>0$. Then from (\ref{e:lim})
(with this $\psi$ and $h$) we get that for sufficiently large $l$
the support of $\mu_l$ in (\ref{e:mul}) meets $J$ nontrivially.
Consequently, for all sufficiently large $l$ there is an eigenvalue
$\lambda_{j,l}\in (\lambda_0,\lambda_0+\epsilon)$. Repeating this
procedure (making $\epsilon$ smaller) will produce infinitely many
eigenvalues accumulating at $\lambda_0$.

To conclude the proof we give a construction for an even function
$h$ with Fourier transform smooth and compactly supported that is
negative outside $J$ and satisfies that $\nu_\psi(h)>0$. Fix a
smooth compactly supported function $g$ with Fourier transform
$\hat{g}$ even and positive on $C$ and set
$M>\frac{\int_{C}\hat{g}(t)d\nu_\psi(t)}{\int_{I}\hat{g}(t)d\nu_\psi(t)}$
(this is finite since the support of $\nu_\psi$ meets $I$). Now let
$F(t)=\sum_{n\leq N}a_n\cos(nt)$, be a trigonometric polynomial
satisfying that $F(t)>M$ for $t^2+\frac{1}{4}\in I$ and $-1<F(t)<0$
for $t^2+\frac{1}{4}$ in the complement of $J$. (The existence of
such a trigonometric polynomial is guaranteed by the Weierstrass's
approximation theorem for polynomials recalling that the Chebyshev
polynomials satisfy $T_n(\cos(t))=\cos(nt)$). Now the function
$h(t)=\hat{g}(t)F(t)$ has Fourier transform smooth of compact
support and satisfies $h(t)<0$ on the complement of $J$ (as it has
the same sign as $F$) and $\nu_\psi(h)>0$ (by the choice of $M$).
\end{proof}

We now complete the proof of Theorem \ref{t:generic}, showing that
for generic $u$ the exceptional exponent set $E(\G_u\bs G)$ is
infinite. As we noted and is easily shown, the set of $u$'s that we
consider in Theorem \ref{t:Einf2} are dense in $\SU(2)\times\SU(2)$.
Let $u_j,\;j=1,2,\ldots$ be an enumeration of a dense set of such
$u$'s. Now for each $l$, the spectrum in $[0,1/4]$ of $\lap$ on
$L^2(\Lambda\bs\bbH,\sigma_l\circ \rho)$ is continuous in $u$. Hence
it follows from Theorem \ref{t:Einf2} that for each $j=1,2,3,\ldots$
there is $\epsilon_j$ such that for $u$ in a small neighborhood
$B(u_j,\epsilon_j)$ of $u_j$ the lattice $\Gamma_u=(\Lambda,\rho_u)$
satisfies $|E(\G_u\bs G)|>j$. Now let
$$B=\bigcap_{J=1}^\infty\bigcup_{j=J}^\infty B(u_j,\epsilon_j).$$
Then $B$ is of the second category in $\SU(2)\times\SU(2)$ and for
any $u\in B$, $E(\Gamma_u\bs G)$ is infinite. We have thus shown
that a generic lattice in the sense of Baire has infinitely many
exceptional exponents. Note that for the generic $u\in
\SU(2)\times\SU(2)$,  $u_1$ and $u_2$ generate a free  group in
$\SU(2)$. Hence the limit measure in (\ref{e:lim}) (as $l\to\infty$)
for such a lattice is supported on $\bbR$ (i.e., it has no
exceptional spectrum). That is the generic lattice has infinitely
many exceptional exponents but in terms of density almost all the
representations are tempered.

\section{Proof of Theorem \ref{t:gap}}
We now give the proof of Theorem \ref{t:gap}. In order to simplify
notations we will write down the full details only for the case $d=2$.
The modifications required to handle $d>2$ are straight
forward and are accounted for in section \ref{s:generald}.

\subsection{Reduction to an asymptotic argument}
Fix a co-compact irreducible lattice, $\Gamma\subset \PSL(2,\bbR)^2$, derived from a
quaternion algebra and let $\rho$ be a unitary representation of
$\Gamma$. It is well known that there are only finitely many
representation occurring in $L^2(\Gamma\bs G,\rho)$ with all
spectral parameters bounded. We can thus reduce Theorem \ref{t:gap}
to the following asymptotic argument
\begin{thm}\label{t:gap1}
Assume that $\pi\cong\pi_1\otimes\pi_2$ occurs in $L^2(\G\bs
G,\rho)$ and that $\pi_1\cong \pi_{s_1}$ is complementary series and
$\pi_2$ is either principal $\pi_{s_2}$ with
$s_2=\frac{1}{2}+ir_2,\;r_2\in[T,2T]$ or discrete $\frakD_m$ with
$|m|\in[T,2T]$. Then for any $c>0$ (as $T\to\infty$)
\begin{eqnarray*}
T^{c|\frac{1}{2}-s_1|}\ll_\epsilon\dim(\rho)(\frac{T^2}{\log(T)}+T^{\frac{c}{2}
-1+\epsilon})
\end{eqnarray*}
\end{thm}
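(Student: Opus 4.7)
The plan is to apply the Selberg trace formula of Section \ref{s:trace} with a carefully chosen non-negative test function that isolates the hypothetical representation $\pi$ on the spectral side, and then to bound the geometric side in absolute value; the desired inequality drops out by comparing the two estimates. Choose $g_1,g_2\in C_c^\infty(\bbR)$ even and real, with $h_j=\hat g_j$ non-negative on $\bbR\cup i\bbR$ (obtained by writing $g_j=\phi_j * \phi_j^\vee$ for suitable $\phi_j$), and insist that $h_2$ vanish at $0$ to sufficiently high order. In the discrete-series case replace $h_2$ by a smooth positive $\psi$ supported away from zero. With $R=c\log T$, take the test function
\[
H(r_1,r_2)=h_1(Rr_1)\,h_2(r_2/T), \qquad \text{resp.\ } h_1(Rr_1)\,\psi(m/T).
\]
Then $H\geq 0$ on the spectrum of $L^2(\Gamma\bs G,\rho)$, so the spectral side of the trace formula is non-negative term by term; as noted in Remark \ref{r:vanish}, the vanishing of $h_2$ at zero is essential to suppress the otherwise dominant contribution of the trivial representation at $r_0=(i/2,i/2)$.

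The single representation $\pi$ alone contributes at the imaginary spectral parameter $r_{\pi,1}=i(s_1-\frac{1}{2})$ in the first coordinate, and a Laplace-method evaluation of $h_1(Rr_{\pi,1})=\int g_1(t)e^{Rt(s_1-\frac{1}{2})}\,dt$ (concentrated near the endpoint of the support of $g_1$) gives $h_1(Rr_{\pi,1})\gg T^{c|\frac{1}{2}-s_1|}/\log T$, while $h_2(r_2/T)$ (resp.\ $\psi(m/T)$) is bounded below by a positive constant since the argument lies in $[1,2]$. By positivity this yields the spectral lower bound $\gg T^{c|\frac{1}{2}-s_1|}/\log T$ for the full sum.

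For the upper bound on the geometric side, the identity contribution, obtained by pairing the Plancherel factor $r\tanh(\pi r)$ (resp.\ $2|m|-1$) against $H$, is $O(\dim\rho\cdot T^2/\log T)$. For the conjugacy-class sum I would bound each summand by $|\chi_\rho(\gamma)|\,|\tilde h_1(\gamma_1)|\,|\tilde h_2(\gamma_2)|\leq \dim\rho\cdot|\tilde h_1(\gamma_1)|\,|\tilde h_2(\gamma_2)|$. The compact support of $g_1$ together with the $R$-rescaling forces the hyperbolic length of $\gamma_1$ to satisfy $|\ell(\gamma_1)|\leq R$, hence $|\Tr(\gamma_1)|\ll T^{c/2}$; in the discrete-series case, Poisson summation in $m$ converts $\psi(m/T)$ into a compactly supported restriction on the rotation angle $\theta_2$ of $\gamma_2$. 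The oscillatory factor $\tilde h_2(\gamma_2)$ on an elliptic element gains a factor of $T^{-1}$ from the vanishing of $h_2$ at zero. Using the quaternion-algebra model of Remark \ref{r:1}, the remaining arithmetic task is to count pairs $(t,\gamma)$ with reduced trace $t=\Tr(\gamma)\in\calO_L$ satisfying $|\iota_1(t)|\ll T^{c/2}$, $|\iota_2(t)|$ bounded, and $|\iota_j(t)|\leq 2$ for $j>d$ (the latter places being Hamiltonian, hence landing in compact groups). Dirichlet's box principle applied to this long thin box in the lattice $\calO_L\subset\bbR^n$ yields $\ll T^{c/2}$ admissible traces, while for each such $t$ the number of $\Gamma$-conjugacy classes with $\Tr(\gamma)=t$ is counted, via Eichler's theory of optimal embeddings of orders into $\calR$, by a sum of class numbers in the relative quadratic extension $L(\sqrt{t^2-4})/L$, which is $O_\epsilon(T^\epsilon)$ by Dirichlet's class number formula. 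Multiplying $T^{c/2}\cdot T^\epsilon\cdot T^{-1}$ produces the conjugacy bound $O_\epsilon(\dim\rho\cdot T^{c/2-1+\epsilon})$, which together with the identity term furnishes the right-hand side in the statement.

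The main obstacle is the arithmetic counting step: one must verify that bounding $\tilde h_2$ on elliptic elements in absolute value is essentially lossless (the oscillations are slow on the relevant scale $T$), and one must keep uniform control of both the class-number sums and the lattice-point count so that the final exponent comes out as the sharp $c/2-1+\epsilon$ rather than something weaker. This is where Shimizu-style counting, Eichler's embedding theorem, and careful bookkeeping of centralizer volumes and integrality constraints on the reduced trace enter as the real engine of the estimate.
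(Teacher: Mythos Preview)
Your outline matches the paper's approach closely: the same test function $h_1(Rr_1)h_2(r_2/T)$ with $R=c\log T$, the same positivity lower bound, and the same two-step counting (Dirichlet box principle for traces in $\calO_L$, then Eichler/class-number bounds for conjugacy classes of given trace). One point is oversimplified, however, and as written leaves a gap.

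You assert that $\tilde h_2(\gamma_2)$ on an elliptic element gains a factor $T^{-1}$ from the vanishing of $h_2$ at zero. This is only true when the rotation angle $\theta_2$ is bounded away from zero, say $\theta_2>T^{-1+\epsilon}$: there the integral $\int \frac{\cosh[(\pi-2\theta_2)Tr]}{\cosh(\pi Tr)}h_2(r)\,dr$ is indeed $O(T^{-2})$ (this is exactly where the high-order vanishing of $h_2$ is used), and after the $T/R$ prefactor one nets $T^{-1}$ as you claim. But for $\theta_2\leq T^{-1+\epsilon}$ that integral is only $O(1)$, so the term carries a factor $T$ rather than $T^{-1}$; the same occurs for hyperbolic--hyperbolic classes, where the support of $g_2(T\ell_{\gamma_2})$ forces $\ell_{\gamma_2}\leq 1/T$, i.e.\ $|t_2|=2+O(T^{-2})$. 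The paper handles both of these by a \emph{second} application of the box principle, this time to the narrow box $|t_1|\leq T^{c/2}$, $||t_2|-2|\ll T^{-2+\epsilon}$, whose volume $\ll T^{c/2-2+\epsilon}$ compensates exactly for the lost $T^2$. Your single box with $|\iota_2(t)|$ merely bounded covers only the generic-angle case; without the narrow-box split the exponent you obtain would be off by $2$.

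A smaller imprecision: what is bounded by $O_\epsilon(T^\epsilon)$ via class numbers is not the raw number of conjugacy classes with trace $t$ but the volume-weighted count $F_\Gamma(t)=\sum_{\{\gamma\}:\Tr\gamma=t}\vol(\Gamma_\gamma\bs G_\gamma)$ divided by $\sqrt{|N_{L/\bbQ}(t^2-4)|}$. The centralizer volume is a regulator, and it is this regulator--times--class-number product that the Dirichlet class number formula controls; the regulator cannot simply be dropped.
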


We now show that this asymptotic argument implies Theorem
\ref{t:gap}.
\begin{proof}[Proof of Theorem \ref{t:gap}]
Fix $\alpha,\epsilon>0$ and let
$M=M(\alpha,\epsilon)=\frac{6+\alpha}{\alpha-\epsilon(4+\alpha)}$.
Then by theorem \ref{t:gap1} with $c=6-2\epsilon$, there is a
constant $C=C(\epsilon,\Gamma)$ such that if $\pi=\pi_1\otimes\pi_2$
occurs in the decomposition with $\pi_1$ complementary with
$p(\pi_1)\geq 6+\alpha$ (i.e., $|\frac{1}{2}-s_1|\geq
\frac{4+\alpha}{2(6+\alpha)}$) then $\pi_2$ is either complementary,
or principal with parameter $r_2\leq (C\dim\rho)^M$ or discrete with
parameter $|m|\leq (C\dim\rho)^M$. Theorem \ref{t:gap} now follows
as there are at most $O((\dim\rho)^{2M})$ such representations.
\end{proof}

\subsection{Reduction to a counting argument}
We now use the Selberg trace formula to reduce Theorem \ref{t:gap1}
to a counting argument.
\begin{prop}\label{p:geom}
Assume that $\pi\cong\pi_1\otimes\pi_2$ occurs in $L^2(\G\bs
G,\rho)$ and satisfies the hypothesis of Theorem \ref{t:gap1}. Then
for any $c>0$ as $T\to\infty$
\begin{eqnarray*}
T^{c|\frac{1}{2}-s_{1}|}\ll_\epsilon \dim(\rho)(
\frac{T^2}{\log(T)}+T\!\!\!\!\!\!\!\!\!\mathop{\sum_{|t_1|\leq
T^{c/2}}}_{|t_2|=2+O(T^{-2+\epsilon})}\!\!\frac{F_\Gamma(t)}{\sqrt{
(t_1^2-4)(t_2^2-4)}}\\
+\frac{1}{T}\!\!\mathop{\sum_{|t_1|\leq T^{c/2}}}_{|t_2|\leq
2}\!\!\frac{F_\Gamma(t)}{\sqrt{(t_1^2-4)(t_2^2-4)}}).
\end{eqnarray*}
where the summation is over elements $t=(t_1,t_2)\in\Tr(\Gamma)$ and
$$F_\Gamma(t)=\mathop{\sum_{\{\gamma\}}}_{\Tr(\gamma)=t}\vol(\Gamma_\gamma\bs
G_\gamma),$$
is counting the number of conjugacy classes in $\Gamma$ with a given
trace.
\end{prop}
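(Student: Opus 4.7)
The plan is to apply the Selberg trace formula of Section~\ref{s:trace} with a product of positive test functions designed to single out the representation $\pi$, extract a lower bound from one spectral term by positivity, and then bound the geometric side in absolute value term by term. Fix $g_1, g_2 \in C_c^\infty(\bbR)$ even, real-valued, with $h_j = \hat g_j \geq 0$ on $\bbR \cup i\bbR$, arranged so that $h_2$ vanishes at $0$ to a large order; in the discrete-series case, replace $h_2(r/T)$ by a smooth positive $\psi$ supported away from $0$. Set $R = c\log T$. In the principal-series case apply the trace formula to $h(r) = h_1(Rr_1) h_2(r_2/T)$; in the discrete-series case apply it on $L^2(\Gamma\bs G,\rho,(0,m))$ with test function $h_1(Rr_1)$ and sum the resulting identity against $\psi(m/T)$ over $m \in \bbZ$, using Poisson summation in $m$ to convert the $m$-sum into a density in the elliptic angle $\theta_2$.

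For the lower bound, positivity of $h$ on $\bbR \cup i\bbR$ lets one drop all but the single contribution of $\pi$. For the complementary factor $\pi_1 = \pi_{s_1}$ with $r_1 = -i(s_1 - 1/2)$, compact support of $g_1$ yields
\[h_1(Rr_1) = \int g_1(x) \cosh\bigl(R|s_1 - 1/2|\,x\bigr)\,dx \;\gg\; \frac{T^{c|s_1 - 1/2|}}{\log T},\]
while the tempered factor $h_2(r_2/T)$ (resp.\ $\psi(m/T)$) is a positive constant since $r_2/T$ (resp.\ $|m|/T$) lies in $[1,2]$, which is arranged to be in the support of $h_2$ (resp.\ $\psi$). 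Thus the spectral sum is $\gg T^{c|s_1 - 1/2|}/\log T$.

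For the upper bound, bound each geometric summand in absolute value. The identity term contributes $O\!\left(\dim\rho \cdot T^2/\log^2 T\right)$, which becomes the $\dim\rho \cdot T^2/\log T$ term once the entire inequality is multiplied by $\log T$. For hyperbolic-hyperbolic conjugacy classes the compact supports of $g_1, g_2$ restrict the geodesic lengths to $l_1 \leq AR$ and $l_2 \leq A/T$, which via $|t_j| = 2\cosh(l_j/2)$ give the ranges $|t_1| \leq T^{c/2}$ and $|t_2| = 2 + O(T^{-2+\eps})$; the transforms $\tilde h_j(\gamma_j) = g_j(l_j)/\sinh(l_j/2)$ produce the factor $1/\sqrt{(t_1^2-4)(t_2^2-4)}$ with an overall prefactor of $T/\log T$, matching the first sum in the statement. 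For the mixed classes (hyperbolic in the first factor, elliptic in the second), together with the corresponding discrete-series elliptic terms after Poisson summation, one uses $1/|\sin\theta_2| \ll 1/\sqrt{4 - t_2^2}$ and extracts a further factor of $1/T$ from the elliptic integral
\[\frac{1}{\sin\theta_2} \int \frac{\cosh[(\pi - 2\theta_2)r]}{\cosh(\pi r)} h_2(r/T)\,dr\]
via the high-order vanishing of $h_2$ at $0$, yielding the $\frac{1}{T}$ sum. Finally multiplying through by $\log T$ absorbs the denominator in the lower bound and produces the stated inequality.

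The main obstacle is obtaining the $1/T$ decay of the above elliptic integral uniformly in $\theta_2 \in (0,\pi)$: the naive kernel bound $|\cosh[(\pi - 2\theta_2)r]/\cosh(\pi r)| \leq 1$ only yields $O(T/|\sin\theta_2|)$, so one must exploit the vanishing of $h_2$ at $0$, either by integration by parts in $r$ or by inserting the explicit Fourier transform of $\cosh[(\pi - 2\theta_2)r]/\cosh(\pi r)$, to recover the extra factor of $T^{-1}$. The discrete-series analogue requires the same gain via Poisson summation on $m$ combined with the fact that $\psi$ is supported away from zero. Without this gain the trivial representation $r = (i/2, i/2)$ would produce a term of size $T^{c/2}$ that would swamp the lower bound, exactly as noted in Remark~\ref{r:vanish}.
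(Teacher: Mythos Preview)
Your overall architecture is exactly that of the paper: positive test function $h_1(Rr_1)h_2(r_2/T)$ (or its discrete-series variant), positivity for the lower bound, and termwise estimation of the geometric side with $R=c\log T$. The identity term, the hyperbolic--hyperbolic term, and the discrete-series Poisson summation are all handled as in the paper. The gap is in your treatment of the hyperbolic--elliptic contribution.

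You assert that the elliptic transform
\[
\frac{1}{\sin\theta_2}\int_{\bbR}\frac{\cosh[(\pi-2\theta_2)r]}{\cosh(\pi r)}\,h_2(r/T)\,dr
\;=\;\frac{T}{\sin\theta_2}\int_{\bbR}\frac{\cosh[(\pi-2\theta_2)Tu]}{\cosh(\pi Tu)}\,h_2(u)\,du
\]
can be made $O(T^{-1}/\sin\theta_2)$ \emph{uniformly} in $\theta_2\in(0,\pi)$ via the vanishing of $h_2$ at $0$. This is false: as $\theta_2\to 0$ the kernel tends to $1$ pointwise and the inner integral tends to $\int h_2(u)\,du\neq 0$, so the expression is genuinely of size $T/\sin\theta_2$, not $T^{-1}/\sin\theta_2$. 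No amount of vanishing of $h_2$ at $0$, integration by parts, or Fourier inversion changes this, since the obstruction is the value $\int h_2=g_2(0)$, not $h_2(0)$. Consequently you cannot place \emph{all} hyperbolic--elliptic classes into the $\tfrac{1}{T}$--sum.

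What the paper actually does is split on the size of $\theta_2$. For $\theta_2>T^{-1+\epsilon}$ the kernel concentrates near $u=0$ and the high-order vanishing of $h_2$ gives the inner integral $O(T^{-2})$, hence those terms land in the $\tfrac{1}{T}$--sum over $|t_2|\le 2$. For $\theta_2<T^{-1+\epsilon}$ one only has the trivial bound $O(1)$ on the inner integral; these near-parabolic elliptic classes (with $2-T^{-2+\epsilon}<|t_2|<2$) therefore carry the prefactor $T$ and must be thrown into the \emph{first} sum, which is precisely why that sum is written over $|t_2|=2+O(T^{-2+\epsilon})$ rather than $|t_2|>2$. Your proposal attributes the first sum solely to hyperbolic--hyperbolic classes, and this is where the $\epsilon$ in the range $|t_2|=2+O(T^{-2+\epsilon})$ you wrote down would otherwise be unexplained. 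Once you make this split, the rest of your argument goes through. (You also omit the elliptic--hyperbolic and purely elliptic classes; the paper disposes of these as $O(1)$ since only finitely many conjugacy classes have an elliptic first coordinate.)
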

We will give the proof separately for the two cases when $\pi_2$ is
principal or discrete series.
\begin{proof}[\textbf{Proof for principal series}]
Let $\pi\cong\pi_1\otimes\pi_2$ occur in $L^2(\G\bs G,\rho)$ with
$\pi_1\cong \pi_{s_1}$ complementary series and $\pi_2\cong
\pi_{s_2}$ principal with parameter $r_2\in[T,2T]$. Consider the
function
$$h_{R,T}(r_1,r_2)=h_1(Rr_1)h_2(\frac{r_2}{T}),$$ where $h_1,h_2$ are even
positive functions with Fourier transforms $g_1,g_2$ smooth and supported on
$[-1,1]$. We also assume that $h_2$ vanishes at zero to a large order
$>2/\epsilon$. We note that this vanishing assumption is crucial for the proof
(see remark \ref{r:vanish}).
Note that for $s_1=\frac{1}{2}+ir_1\in(0,\frac{1}{2})$ and
$r_{2}\in[T,2T]$ we can bound the function $h_{R,T}(r)\gg
\frac{\exp(R|\frac{1}{2}-s_1|)}{R}$ from below. Since the function
is positive, this is also a lower bound for the sum over the full
spectrum
\[\frac{\exp(R|\frac{1}{2}-s_1|)}{R}\ll\sum_k h_{R,T}(r_k).\]
We now use the trace formula (with wight $(0,0)$) to transform the
sum over the eigenvalues to a sum over conjugacy classes. The
geometric side of the trace formula is given by
\begin{eqnarray*}
\frac{\vol(\Gamma\bs G) \chi_\rho(1)}{16\pi^2}{\int\!\!\!\int}_{\!\!\!\bbR^2}
h_{R,T}(r_1,r_2)r_1\tanh(\pi r_1)r_2\tanh(\pi r_2)dr_1dr_2\\
+ \frac{T}{R}\!\!\sum_{\{\gamma\}\in e.h}\!\!\!\vol(\G_\gamma\bs
G_\gamma) \frac{\chi_\rho(\gamma)g_2(T
l_{\gamma_2})}{\sinh(\frac{l_{\gamma_2}}{2})\sin\theta_{\gamma_1}}\int_{-\infty}
^{\infty}
\frac{\cosh(\frac{(\pi-2\theta_{\gamma_1})r}{R})}{\cosh(\frac{\pi
r}{R})}h_1(r)dr \\
+\frac{T}{R}\!\!\sum_{\{\gamma\}\in h.e}\!\!\!\vol(\G_\gamma\bs
G_\gamma)\frac{\chi_\rho(\gamma)
g_1(\frac{l_{\gamma_1}}{R})}{\sinh(\frac{l_{\gamma_1}}{2})\sin\theta_{\gamma_2}}
\int_{-\infty}^{\infty}\frac{\cosh[(\pi-2\theta_{\gamma_2})Tr]}{\cosh(\pi T
r)}h_2(r)dr\\
+\frac{T}{R}\sum_{\{\gamma\}\in h.h}\vol(\G_\gamma\bs
G_\gamma)\frac{\chi_\rho(\gamma)g_1(\frac{l_{\gamma_1}}{R})}{\sinh(\frac{l_{
\gamma_2}}{2})} \frac{g_2(Tl_{\gamma_1})}{\sinh(\frac{l_{\gamma_2}}{2})}\\
\end{eqnarray*}
where we divided the conjugacy classes into the different types:
Trivial conjugacy class, elliptic-hyperbolic, hyperbolic-elliptic
and hyperbolic-hyperbolic. (There could also be elliptic-elliptic
elements that we ignore as their total contribution to the sum is
bounded by $O(1)$.) We will now give separate bounds for each term
where we replace each summand by its absolute value and bound the
character of the representation $|\chi_\rho(\gamma)|\leq
\chi_\rho(1)=\dim\rho$ by the dimension.

\subsubsection*{Trivial Conjugacy class}
By making a change of variables $r_1\mapsto \frac{r_1}{R}$ and
$r_2\mapsto Tr_2$ and bounding $|\tanh(t)|\leq 1$, the contribution
of the trivial conjugacy class is bounded by
$O(\chi_\rho(1)\frac{T^2}{R^2})$.
\subsubsection*{Elliptic-Hyperbolic}
For the elliptic-hyperbolic conjugacy class, note that $g_2$ is
supported on $[-1,1]$, hence the only conjugacy classes contributing
to this sum are the ones with $l_{\gamma_2}\leq \frac{1}{T}$. But
there are only finitely many conjugacy classes with $\gamma_1$
elliptic and $l_{\gamma_2}\leq\frac{1}{T}$, hence the contribution
of these conjugacy classes is bounded by $O(T)$. (In fact for $T$
sufficiently large there are no conjugacy classes satisfying this
condition so that it is bounded by $O(1)$).
\subsubsection*{Hyperbolic-Hyperbolic}
The only contribution of hyperbolic-hyperbolic elements comes from
elements with $l_{\gamma_1}\leq R$ and $l_{\gamma_2}\leq
\frac{1}{T}$. It is convenient to rephrase this in terms of the
traces of the conjugacy classes. For each conjugacy class,
$\{\gamma\}$ it's trace $t=(t_1,t_2)=(\Tr(\gamma_1),\Tr(\gamma_2))$
is given by $t_j=e^{l_{\gamma_j}/2}+e^{-l_{\gamma_j}/2}$.
Consequently, the only contribution comes from conjugacy classes
such that $|t_1|\sim e^{l_{\gamma_1}/2}\leq e^{R/2}$ and
$2<|t_2|\leq 2+\frac{1}{T^2}$. We can also write
$\sinh(l_{\gamma_j}/2)=\sqrt{t_j^2-4}$ so the contribution of the
hyperbolic-hyperbolic conjugacy classes is bounded by
\[(h.h.)\ll\dim(\rho)\frac{T}{R}\mathop{\sum_{|t_1|\leq
e^{R/2}}}_{2<|t_2|<2+\frac{1}{T^2}}\frac{F_\Gamma(t)}{\sqrt{(t_1^2-4)(t_2^2-4)}}
.\]
\subsubsection*{Hyperbolic-Elliptic}
As above, since $g_1$ is supported on $[-1,1]$ the only contribution
here is from conjugacy classes satisfying $l_{\gamma_1}\leq R$. For
these we estimate the contribution of the integral
\[\int_{-\infty}^{\infty}
    \frac{\cosh[(\pi-2\theta)Tr]}{\cosh(\pi T r)}h_2(r)dr.\]
First for $\theta=\theta_{\gamma_2} <T^{-1+\epsilon}$ we just bound
this integral by $O(1)$. Next, for $\theta>T^{-1+\epsilon}$ separate
this integral into two parts: The first when $r$ is small, where we
just bound $|\frac{\cosh[(\pi-2\theta)Tr]}{\cosh(\pi T r)}|\leq 1$
to get
\[|\int_{|r|\leq T^{-\epsilon/2}}\frac{\cosh[(\pi-2\theta)Tr]}{\cosh(\pi T
r)}h_2(r)dr|\leq \int_{|r|\leq T^{-\epsilon/2}}h_2(r)dr.\]
Since we assume $h_2$ vanishes at zero to order$>\frac{2}{\epsilon}$
we get that $h_2(r)\ll r^{\frac{2}{\epsilon}}$ near zero, hence, the
contribution of this part is bounded by $O(T^{-2})$. Now for the
next part we can use the exponential decay of
$\frac{\cosh[(\pi-2\theta)Tr]}{\cosh(\pi T r)}$ to get that
$\int_{|r|>T^{-\epsilon/2}}\frac{\cosh[(\pi-2\theta)Tr]}{\cosh(\pi T
r)}h_2(r)dr \ll e^{-T^{\epsilon/2}}$, so that for large $T$ the
whole integral is bounded by $O(\frac{1}{T^2})$.

Thus, for $|\theta_{\gamma_2}|<T^{-1+\epsilon}$ and
$l_{\gamma_1}\leq R$ (equivalently
$2-\frac{1}{T^{2-\epsilon}}\leq|t_2|\leq 2$ and $|t_1|\leq e^{R/2}$)
we get a contribution of
\[\frac{T}{R}\frac{c_\gamma|\chi_\rho(\gamma)|}{\sinh(l_{\gamma_1}/2)\sin\theta_
{\gamma_2}}=\frac{T}{R}\frac{c_\gamma
|\chi_\rho(\gamma)|}{\sqrt{(t_1^2-4)|t_2^2-4|}},\]
and for $|\theta_{\gamma_1}|>T^{-1+\epsilon}$ and $l_{\gamma_2}\leq
R$ (equivalently $2-\frac{1}{T^{2-\epsilon}}\geq|t_2| \leq 2$ and
$|t_1|\leq e^{R/2}$) we get a contribution of
\[\frac{1}{RT}\frac{c_\gamma
|\chi_\rho(\gamma)|}{\sinh(l_{\gamma_1}/2)\sin\theta_{\gamma_2}}=\frac{1}{RT}
\frac{c_\gamma|\chi_\rho(\gamma)|}{\sqrt{(t_1^2-4)|t_2^2-4|}}.\]
We can thus bound the contribution of the  hyperbolic-elliptic
elements by
\begin{eqnarray*}(h.e.) &\ll&\dim(\rho)\frac{T}{R}\mathop{\sum_{|t_1|\leq
e^{R/2}}}_{2-\frac{1}{T^{2-\epsilon}}<|t_2|<2}\frac{F_\Gamma(t)}{\sqrt{
(t_1^2-4)(t_2^2-4)}}.\\
&+&\dim(\rho)\frac{1}{RT}\mathop{\sum_{|t_1|\leq
e^{R/2}}}_{|t_2|\leq 2}\frac{F_\Gamma(t)}{\sqrt{(t_1^2-4)(t_2^2-4)}}
\end{eqnarray*}
Putting all these bounds together, and taking $R=c\log(T)$ concludes
the proof.
\end{proof}

\begin{proof}[\textbf{Proof for discrete series}]
Let $\pi\cong\pi_1\otimes\pi_2$ occur in $L^2(\G\bs G,\rho)$ with
$\pi_1\cong \pi_{s}$ complementary series with
$s=\frac{1}{2}+ir\in(0,\frac{1}{2})$ and $\pi_2\cong \frakD_m$
discrete series with weight $m\in[T,2T]$ (the case of $-m\in [T,2T]$
is analogous). Let $h$ be an even positive function satisfying
$h(0)=1$ with Fourier transforms $g$ smooth and supported on
$[-1,1]$. Similar to the previous case, we can bound the function
$h(Rr)\gg \frac{e^{R(1/2-s)}}{R}$ from below, and from positivity
this is also a lower bound for the sum over all representations
$\pi_k\cong\pi_{s_{k,m}}\otimes\frakD_{m}$ occurring in
$L^2(\Gamma\bs G,\rho,(0,m))$,
\[\frac{\exp(R|1/2-s|)}{R}\ll \sum_k h(Rr_{k,m}),\]
where as usual  $s_{k,m}=\frac{1}{2}+r_{k,m}$. Now use the trace
formula with weight $(0,m)$ to transform this sum to a sum over
conjugacy classes
\begin{eqnarray*}
\sum_k h(Rr_{k,m})=\frac{c_1 \chi_\rho(1)(2|m|-1)}{16R^2\pi^2}\int_{\bbR}
h(r)r\tanh(\pi Rr)dr\\
 \frac{1}{R}\sum_{\{\gamma\}\in h.e}c_\gamma
\chi_\rho(\gamma)\frac{g_1(\frac{l_{\gamma_1}}{R})}{\sinh(\frac{l_{\gamma_2}}{2}
)} \frac{ie^{i(2|m|-1)\theta_2}}{2\sin(\theta_2)}\\
\end{eqnarray*}

In order to evaluate this sum we first add the contribution of all
other wights in a window around $T$ (thus only making it bigger).
Let $\psi$ be a positive smooth function supported on
$[\frac{1}{2},\frac{5}{2}]$ with $\psi(1)=1$, and consider the sum
\[\sum_m\psi(\frac{m}{T})\sum_{k} h(Rr_{k,m}).\]
From the positivity of $\psi$ this sum is still bounded from below
by $\frac{\exp(R|1/2-s|)}{R}$. On the other hand if we replace the
inner sum with the right hand side of the trace formula we get
\begin{eqnarray*}
\lefteqn{\sum_m\psi(\frac{m}{T})\sum_k h(Rr_{k,m})=}\\
&&\frac{c_1 \chi_\rho(1)}{16R^2\pi^2}\int_{\bbR} h(r)r\tanh(\pi R r)dr\sum_m
(2|m|-1)\psi(\frac{m}{T})\\
&&+ \frac{1}{R}\sum_{\{\gamma\}\in h.e}c_\gamma
\chi_\rho(\gamma)\frac{g_1(\frac{l_{\gamma_1}}{R})}{\sinh(\frac{l_{\gamma_2}}{2}
)\sin(\theta_2)}\sum_{m} \psi(\frac{m}{T})e^{i(2m-1)\theta_2}\\
\end{eqnarray*}

The first term is bounded by $O(\chi_\rho(1)\frac{T^2}{R^2})$
(recall we are only considering $\frac{T}{2}\leq m\leq
\frac{5T}{2}$). We can bound the second term by
\[\frac{\chi_\rho(1)}{R}\mathop{\sum_{\{\gamma\}\in h.e}}_{l_{\gamma_1}\leq
R}|\frac{c_\gamma}{\sinh(\frac{l_{\gamma_2}}{2})\sin(\theta_2)}||\sum_{m}
\psi(\frac{m}{T})e^{2im\theta_2}|\]
Now use Poisson summation to get
\[|\sum_{m}
\psi(\frac{m}{T})e^{2im\theta}|=|T\sum_{m}\hat{\psi}(T(\theta+2m))|,\]
where $\hat{\psi}$ is the Fourier transform of $\psi$. From the fast
decay of $\hat{\psi}$ we can deduce that the main contribution is
given by $T\hat{\psi}(T\theta)$, which is bounded by
$O_\epsilon(T^{-1})$ for $\theta \geq T^{-1+\epsilon}$ and by $O(T)$
for $\theta\leq T^{-1+\epsilon}$.

Thus, exactly as in the previous case we get
\begin{eqnarray*}
\frac{\exp(R|1/2-s|)}{R}\ll_\epsilon \dim(\rho)(\frac{T^2}{R^2}+
\frac{T}{R}\mathop{\sum_{|t_1|\leq
e^{R/2}}}_{2-\frac{1}{T^{2-\epsilon}}<|t_2|<2}
\frac{F_\Gamma(t)}{\sqrt{(t_1^2-4)(t_2^2-4)}}\\
+\frac{1}{RT}\mathop{\sum_{|t_1|\leq e^{R/2}}}_{|t_2|\leq
2}\frac{F_\Gamma(t)}{\sqrt{(t_1^2-4)(t_2^2-4)}})
\end{eqnarray*}
and setting $R=c\log T$ concludes the proof.
\end{proof}

Theorem \ref{t:gap1} is now reduced to the following two counting
arguments:
\begin{prop}[First counting argument]\label{p:count1}
There is a constant $C$ (depending only on $\Gamma$) such that for
any $(x_1,x_2)\in\bbR^2$
\[\sharp\set{t\in\Tr(\Gamma):|t_1-x_1|\leq T_1,|t_2-x_2|\leq T_2}\leq
1+CT_1T_2\]
\end{prop}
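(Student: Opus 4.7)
\medskip\noindent\emph{Proof plan.} The strategy is to reduce the counting of traces to a lattice-point count in an axis-aligned box in $\bbR^n$ via the Minkowski embedding of $\calO_L$, and then to use a Dirichlet box principle, exploiting that any nonzero algebraic integer has absolute norm $\geq 1$. By Remark \ref{r:1} we may take $\Gamma=\Gamma(\calR)$ with $\calR$ a maximal order in a quaternion algebra $\calA$ over a totally real field $L$ of degree $n$, whose embeddings $\iota_1,\ldots,\iota_n\colon L\hookrightarrow\bbR$ are split at $\iota_1,\iota_2$ and ramified at $\iota_3,\ldots,\iota_n$. For $\gamma=(\iota_1(\alpha),\iota_2(\alpha))\in\Gamma$ coming from $\alpha\in\calR^1$, write $a\in\calO_L$ for the reduced trace of $\alpha$; then $\Tr(\gamma_j)=\iota_j(a)$ (up to sign), while for $j\geq 3$ the image $\iota_j(\alpha)$ is a unit Hamilton quaternion and so $|\iota_j(a)|\leq 2$. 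Absorbing the bounded sign-ambiguity into the final constant, the proposition reduces to bounding
\[
N \;=\; \#\set{a\in\calO_L:\;|\iota_1(a)-x_1|\leq T_1,\;|\iota_2(a)-x_2|\leq T_2,\;|\iota_j(a)|\leq 2 \text{ for } j\geq 3}.
\]

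\medskip\noindent
Under the Minkowski embedding $a\mapsto(\iota_1(a),\ldots,\iota_n(a))$, $\calO_L$ becomes a full-rank lattice in $\bbR^n$, and $N$ is the number of its lattice points inside an axis-aligned box $B$ of side lengths $(2T_1,2T_2,4,\ldots,4)$, hence of volume $V=4^{n-1}T_1T_2$. Pick an integer $K$ with $V<K\leq V+1$, and partition $B$ into $K$ congruent strips obtained by cutting the first coordinate into $K$ equal parts. Each strip has side lengths $(b_1,\ldots,b_n)=(2T_1/K,2T_2,4,\ldots,4)$, so $\prod_{j=1}^{n} b_j=V/K<1$.

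\medskip\noindent
If two distinct elements $a,a'\in\calO_L$ belonged to a common strip, then $\delta=a-a'\in\calO_L\sm\{0\}$ would satisfy $|\iota_j(\delta)|\leq b_j$ for each $j$, and therefore
\[
|N_{L/\bbQ}(\delta)|=\prod_{j=1}^{n}|\iota_j(\delta)|\leq\prod_{j=1}^{n} b_j<1,
\]
contradicting that $|N_{L/\bbQ}(\delta)|$ is a positive rational integer for any nonzero algebraic integer $\delta$. Hence each strip contains at most one element of $\calO_L$, giving $N\leq K\leq 1+4^{n-1}T_1T_2$ and hence the proposition. The argument is entirely elementary; the only care needed is in choosing $K$ strictly larger than $V$ to guarantee $\prod_j b_j<1$. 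The same argument goes through verbatim for $d\geq 2$ with a box of side lengths $(2T_1,\ldots,2T_d,4,\ldots,4)$, yielding the analogous bound $1+CT_1\cdots T_d$.
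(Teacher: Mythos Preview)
Your argument is correct and follows essentially the same route as the paper: reduce $\Tr(\Gamma)$ to lattice points of $\calO_L$ in an axis-aligned box (using $|\iota_j(a)|\leq 2$ at the ramified places), then slice the box along one coordinate and apply the pigeonhole principle together with $|N_{L/\bbQ}(\delta)|\geq 1$ for nonzero $\delta\in\calO_L$. The paper packages the box-principle step as a separate lemma (with a continuous parameter $c>2^{n-1}$ in place of your integer $K$), but the mechanism and the resulting constant $4^{n-1}$ are the same.
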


\begin{prop}[Second counting argument]\label{p:count2}
\[\frac{F_\Gamma(t)}{\sqrt{|(t_1^2-4)(t_2^2-4)|}}\ll_\epsilon
|(t_1^2-4)(t_2^2-4)|^{\epsilon}\]
\end{prop}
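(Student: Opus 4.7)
The plan is to translate the problem into counting optimal embeddings of quadratic $\calO_L$-orders into the maximal order $\calR \subset \calA$, and then combine Eichler's theorem on optimal embeddings with the Dirichlet class number formula. Throughout, $\calO_L$ denotes the ring of integers of the totally real field $L$.

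Fix $t = (t_1, t_2)$. Any $\gamma \in \Gamma$ with $\Tr(\gamma) = t$ lifts to $\alpha \in \calR^1$ whose reduced trace $s = \alpha + \bar\alpha \in \calO_L$ satisfies $\iota_j(s) = t_j$ for $j = 1, 2$. Since $\iota_j(\alpha) \in \SU(2)$ for $j > 2$, one has $|\iota_j(s)| \leq 2$ at those archimedean places, so $s$ is confined to a bounded region of $\calO_L \otimes \bbR$ in all coordinates except the first two, and discreteness of $\calO_L$ yields only $O(1)$ possibilities for $s$ given $t$. For each such $s$, the $L$-algebra $E = L[x]/(x^2 - sx + 1)$ is a quadratic \'etale extension of $L$, and $\alpha$ realizes an optimal embedding of the order $\calO_s = \calO_L[\alpha] \subset E$ into $\calR$. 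Two elements $\gamma, \gamma'$ with trace $t$ are $\Gamma$-conjugate iff the corresponding embeddings are $\calR^\times$-conjugate.

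I would then invoke Eichler's theorem: the number of $\calR^\times$-classes of optimal embeddings of $\calO_s$ into $\calR$ equals $h(\calO_s)$ times a product of local factors at the primes ramifying in $\calA$ and those dividing the conductor of $\calO_s$, and those local factors are $\ll_\epsilon |\mathrm{disc}(\calO_s)|^\epsilon$. For each such $\gamma$, the centralizer $\Gamma_\gamma$ is the image of the norm-one units $\calO_s^1/\{\pm 1\}$, and $\vol(\Gamma_\gamma \bs G_\gamma)$ coincides, up to a bounded factor depending only on $L$, with the relative regulator $R(\calO_s)$ of $E/L$. Summing over embedding classes then yields
\[
F_\Gamma(t) \ll_\epsilon \sum_{s} h(\calO_s)\, R(\calO_s)\, |\mathrm{disc}(\calO_s)|^\epsilon,
\]
where the sum over $s$ has $O(1)$ terms by the first paragraph.

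The Dirichlet class number formula for $\zeta_E$, together with the standard bound $\mathrm{Res}_{s=1} \zeta_E(s) \ll_\epsilon |\mathrm{disc}(E)|^\epsilon$, gives $h(\calO_s) R(\calO_s) \ll_\epsilon |\mathrm{disc}(\calO_s)|^{1/2+\epsilon}$. Since the discriminant of $x^2 - sx + 1$ is $s^2 - 4$, the integer $|\mathrm{disc}(\calO_s)|$ divides $|N_{L/\bbQ}(s^2 - 4)| = \prod_{j=1}^n |\iota_j(s)^2 - 4|$, and for $j > 2$ the factor is bounded by the $|\iota_j(s)| \leq 2$ estimate. Hence $|\mathrm{disc}(\calO_s)| \ll |(t_1^2 - 4)(t_2^2 - 4)|$, and the claim follows after dividing by $\sqrt{|(t_1^2-4)(t_2^2-4)|}$. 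The main obstacle is the accurate identification of $\vol(\Gamma_\gamma \bs G_\gamma)$ with $R(\calO_s)$ and the careful handling of the local factors in Eichler's formula at primes where $\calO_s$ is non-maximal or where $\calA$ ramifies; both are standard in the theory of quaternion algebras but require some bookkeeping.
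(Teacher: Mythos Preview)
Your approach is essentially the paper's: reduce to Eichler's count of optimal embeddings, bound by the class number times a regulator, and apply the Dirichlet class number formula together with the divisor bound. There is, however, a genuine imprecision that the paper handles more carefully and that you should fix.

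You assert that each $\alpha\in\calR^1$ with reduced trace $s$ realizes an \emph{optimal} embedding of the single order $\calO_s=\calO_L[\alpha]$, and that $\vol(\Gamma_\gamma\backslash G_\gamma)$ is the regulator of $\calO_s^1$. Neither is correct. The embedding $L(\alpha)\hookrightarrow\calA$ is optimal for the order $\calR_\alpha=\calR\cap L(\alpha)$, which contains $\calO_L[\alpha]$ but is in general strictly larger; and $\vol(\Gamma_\gamma\backslash G_\gamma)=\reg(\calR_\alpha^1)$ depends on which order this is. The paper resolves this by showing (Proposition~\ref{p:cor}) that $\calR_\alpha\cong\calO_{D,d}$ for some ideal $d\mid(D)$ with $(D)/d$ a square, and then writing
\[
F_\Gamma(t)=\sum_{d\mid(D)}\reg(\calO_{D,d}^1)\,l(\calO_{D,d}),
\]
where $l(\calO_{D,d})$ is the number of optimal embedding classes of $\calO_{D,d}$. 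Each summand is then bounded via Eichler and the class number formula by $C\sqrt{N_{L/\bbQ}(d)}\,\res_{s=1}\zeta_K(s)$, and the sum over $d$ has $O(N_{L/\bbQ}(D)^\epsilon)$ terms. Your single-order formulation misses this stratification; as written, your count of optimal embeddings of $\calO_L[\alpha]$ alone would \emph{undercount} the conjugacy classes (those $\alpha$ with $\calR_\alpha\supsetneq\calO_L[\alpha]$ are not captured), so the upper bound is not justified. The fix is exactly the sum over $d\mid(D)$, after which your sketch coincides with the paper's proof.

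A minor point: in your first paragraph, the element $s\in\calO_L$ is in fact \emph{uniquely} determined by $t$ (any single embedding $\iota_j$ is injective on $L$), so the ``$O(1)$ possibilities'' argument is unnecessary.
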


\begin{proof}[Proof of Theorem \ref{t:gap1}]
We give the proof of Theorem \ref{t:gap1} from the two counting
arguments. Assume that $\pi\cong\pi_1\otimes\pi_2$ occur in
$L^2(\G\bs G,\rho)$ and satisfies the hypothesis of Theorem
\ref{t:gap1}. For fixed $c>0$ and any $\epsilon>0$ let
$\epsilon_1,\epsilon_2$ such that $\epsilon=\epsilon_1+c\epsilon_2$,
then by Proposition \ref{p:geom} we have
\begin{eqnarray*}
T^{c|1/2-s_1|}\ll_\epsilon \dim(\rho)(
\frac{T^2}{\log(T)}+T\!\!\!\!\!\!\!\!\!\mathop{\sum_{|t_1|\leq
T^{c/2}}}_{|t_2|=2+O(T^{-2+\epsilon_1})}\!\!\frac{F_\Gamma(t)}{\sqrt{
|(t_1^2-4)(t_2^2-4)|}}\\
+\frac{1}{T}\!\!\mathop{\sum_{|t_1|\leq T^{c/2}}}_{|t_2|\leq
2}\!\!\frac{F_\Gamma(t)}{\sqrt{|(t_1^2-4)(t_2^2-4)|}}).
\end{eqnarray*}
The second counting argument (Proposition \ref{p:count2}) together
with the bound $|(t_1^2-4)(t_2^2-4)|\ll T^{c}$ (which holds for all
pairs $(t_1,t_2)$ appearing in the sum) gives
\begin{eqnarray*}
\lefteqn{T^{c|1/2-s_1|}\ll_\epsilon \dim(\rho)( \frac{T^2}{\log(T)}}\\
&&+T^{1+c\epsilon_2}\sharp\set{t\in\Tr(\Gamma):|t_1|\leq
T^{c/2},\;|t_2|=2+O(T^{-2+\epsilon_1})}\\
&&+T^{-1+c\epsilon_2}\sharp\set{t\in\Tr(\Gamma):|t_1|\leq
T^{c/2},\;|t_2|\leq 2})
\end{eqnarray*}
Now by the first counting argument (Proposition \ref{p:count2}) we
get
\[T^{c|1/2-s_1|}\ll_\epsilon \dim(\rho)(
\frac{T^2}{\log(T)}+T^{c/2-1+\epsilon})\]
concluding the proof.
\end{proof}

\subsection{Modifications for $d>2$}\label{s:generald}
Let $\G\subset \PSL(2,\bbR)^d$ be a lattice (derived from quaternion algebra)
and $\rho$ a unitary representation.
Assume that $\pi\cong \pi_1\otimes\pi_2\otimes\cdots\otimes\pi_d$ occurs in
$L^2(\G\bs
G,\rho)$ with $\pi_1\cong \pi_{s_1}$ complementary series and let $J_1,J_2,J_3\subset \{2,\ldots,
n\}$ with $J_1$
the set of indices for which $\pi_j$ is either complementary series or principal
series with $r_j< 1$, $J_2$ the set of indices for which $\pi_j$ is of
principal series with $r_j>1$ and $J_3$ the set of
indices for which $\pi_j\cong \frakD_{m_j}$. For $j\in J_2\cup J_3$ let $T_j\geq 1$ be such that $r_j\in [T_j,2T_j]$ for $j\in J_2$ and $|m_j|\in[T_j,2T_j]$ for $j\in J_3$ and let $T=\prod_{j\in J_2\cup J_3}T_j$.
With these notations the statement of Theorem \ref{t:gap1} remains the same, that is for any
$c>0$
\begin{eqnarray}\label{e:geometric}
T^{c|\frac{1}{2}-s_1|}\ll_\epsilon\dim(\rho)(\frac{T^2}{\log(T)}+T^{\frac{c}{2}
-1+\epsilon}).
\end{eqnarray}
Theorem \ref{t:gap} now follows from (\ref{e:geometric}) just as in
the case of $d=2$. In order to prove the asymptotic estimate (\ref{e:geometric}) in this setting, we apply the trace formula (and Poisson summation in the $m_j$ variables) to the test function
\[h(r;m)=h_1(c\log(T)r_1)\prod_{j\in
J_1}h_1(r_j)\prod_{j\in J_2}h_2(\frac{r_j}{T_j})\prod_{j\in J_3}\psi(\frac{m_j}{T_j}),\]
where $h_1,h_2$ and $\psi$ are as in the proof of Proposition \ref{p:geom}.
The result then follows from the same estimates as in the proof of Proposition
\ref{p:geom} (and some elementary combinatorics) together with the natural
generalization of the two counting arguments above (the proofs of the counting arguments given below are for any $d\geq 2$).

\section{Counting solutions}
In the following section we give proofs for the two counting
arguments. Let $\calA$ be a quaternion algebra unramified in $d$
real places, let $\calR$ a maximal order in $\calA$ and let
$\Gamma\subset\PSL(2,\bbR)^d$ be the corresponding lattice.
\subsection{First counting argument}
The proof of the first counting argument is a direct result of the
following estimate on the number of lattice points coming from a
number field lying inside a rectangular box. Let $L/\bbQ$ be a
totally real number field of degree $n$ and $\iota_1,\ldots,\iota_n$
the different embeddings of $L$ to $\bbR$. We then think of
$\calO_L$ as a lattice in $\bbR^n$ via the map $\calO_L\ni t \mapsto
(\iota_1(t),\ldots,\iota_n(t))\in\bbR^n$. We show that the number of
such lattice points in any box parallel to the axes is bounded by
the volume of the box.
\begin{rem}
Note that if the volume of the box is large with comparison to the
area of its boundary, then this result would follow from the fact
that the volume of the fundamental domain of this lattice is given
by the square root of the discriminant and is hence $>1$. However,
we are interested in particular in the case where the box is narrow
in one direction and long in the other so that this type of argument
will not work. Fortunately, there is a simple argument that works
uniformly for all such boxes.
\end{rem}

\begin{lem}\label{l:count}
For any $B\subset \bbR^n$ a box parallel to the axes the number of
lattice points in this box satisfy $|B\cap \calO_L|\leq 1+\vol(B).$
\end{lem}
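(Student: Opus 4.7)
The plan is to use the Dirichlet box principle applied in just one direction, exploiting the fact that nonzero algebraic integers have absolute norm at least $1$.

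Write $B=\prod_{j=1}^{n}[a_j,a_j+L_j]$, so $V:=\vol(B)=\prod_{j}L_j$, and set $k=\lfloor V\rfloor+1$. I will slice $B$ into $k$ congruent ``slabs''
\[
B_i=[a_1+(i-1)L_1/k,\,a_1+iL_1/k]\times\prod_{j\ge 2}[a_j,a_j+L_j],\qquad i=1,\dots,k,
\]
by subdividing only along the first coordinate direction. Their individual volume is $V/k<1$, and their union is $B$.

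Suppose for contradiction that $|B\cap\calO_L|\ge k+1$. By pigeonhole, two distinct lattice points $t_1,t_2\in\calO_L$ lie, under the embedding $t\mapsto(\iota_1(t),\dots,\iota_n(t))$, in the same slab $B_i$. Their difference $t=t_1-t_2$ is then a nonzero element of $\calO_L$ (embeddings are injective), and its coordinates satisfy $|\iota_1(t)|\le L_1/k$ together with $|\iota_j(t)|\le L_j$ for $j\ge 2$. Multiplying these bounds,
\[
|N_{L/\bbQ}(t)|=\prod_{j=1}^{n}|\iota_j(t)|\le V/k<1.
\]

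The contradiction comes from the standard fact that for any nonzero $t\in\calO_L$ the field norm $N_{L/\bbQ}(t)$ is a nonzero rational integer, so $|N_{L/\bbQ}(t)|\ge 1$. Hence $|B\cap\calO_L|\le k=\lfloor V\rfloor+1\le 1+\vol(B)$, as required. I do not anticipate any serious obstacle: the only mild point is that one must slice along a single axis rather than in all directions (slicing into $k^n$ cubes would waste a factor and not give the sharp $1+V$ constant), and then the norm inequality closes the argument uniformly in the shape of $B$.
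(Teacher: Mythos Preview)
Your proof is correct and follows essentially the same approach as the paper's: slice the box along a single coordinate direction, apply the pigeonhole principle, and derive a contradiction from the fact that a nonzero algebraic integer has absolute norm at least $1$. Your version is in fact slightly tidier, since you choose the number of slabs to be $k=\lfloor V\rfloor+1$ directly, whereas the paper reaches the constant by a limiting argument (slicing into pieces of length $\frac{1}{cT_2\cdots T_n}$ with $c>2^{n-1}$ and then letting $c\to 2^{n-1}$).
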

\begin{proof}
The only thing we will use is that for any $0\neq t\in \calO_L$ we
have $N_{L/\bbQ}(t)\in\bbZ\setminus \{0\}$, and hence
$N_{L/\bbQ}(t)>1$. Let $T_1,\ldots,T_n>0$ and $\vec{x}\in\bbR^n$
such that
\[B=\set{t\in \bbR^n:|t_j-x_j|\leq T_j}.\]
Now, decompose the segment $[x_1-T_1,x_1+T_1]$ into short segments
of length $\frac{1}{cT_2\cdots T_n}$ with $c>2^{n-1}$. Then there
are less then $2cT_1\cdots T_n+1$ segments (one of them might be
shorter). Now, if there were more then $2cT_1\cdots T_n+1$ elements
in $B\cap \calO_L$, then there must be at least two elements $t\neq
t'$ such that $\iota_1(t),\iota_1(t')$ lie in the same segment.
Consequently, we get that $|\iota_1(t-t')|<\frac{1}{cT_2\cdots
T_n}$, and on the other hand for $j\neq 1$, $|\iota_j(t-t')|\leq
2T_j$. We thus get that $|N_{L/\bbQ}(t-t')|\leq\frac{2^{n-1}}{c}<1$
in contradiction. We have thus shown that $|B\cap \calO_L|<
2cT_1\cdots T_n+1$ for any $c>2^{n-1}$ implying that indeed
\[|B\cap \calO_L|\leq  2^nT_1\cdots T_n +1=\vol(B)+1.\]
\end{proof}

\begin{proof}[Proof of Proposition \ref{p:count1}]
Let $\Gamma\subset \PSL(2,\bbR)^d$ be a lattice derived from a quaternion algebra over a
totally real number field $L$. Denote by $\iota_1,\ldots\iota_n$ the
different embeddings of $L$ into $\bbR$. Let
$(t_1,t_2,\ldots,t_d)=\Tr(\gamma)\in\Tr(\Gamma)$. Then there is
$\alpha\in\calR^1$ such that $\gamma_j=\iota_j(\alpha)$ for $1\leq j\leq d$.
Let $t=\Tr_\calA(\alpha)\in\calO_L$ then $t_j=\iota_j(t)$ for $j\leq d$ and for $j>d$ we have
$\iota_j(\calR^1)\subseteq SO(2)$ so $|\iota_j(t)|\leq 2$.
Consequently, we can bound
\[\sharp\set{(t_1,\ldots,t_d)\in\Tr(\Gamma):\forall j\leq d,\;|t_j-x_j|\leq T_j},\]
by the number of elements in
\[\set{t\in\calO_L:\forall j\leq d,\;|t_j-x_j|\leq T_j \mbox{ and }\forall j>d,\; |t_j|\leq
2}\]
which is bounded by $1+2^{2n-d}T_1T_2\cdots T_d$.
\end{proof}

\subsection{Arithmetic formula}
Before we proceed with the proof of the second counting argument, we
give a formula for the counting function $F_\Gamma(t)$ in terms of
certain arithmetic invariants (see appendix \ref{s:algebraic} for
the related background from algebraic number theory).

Let $\alpha\in \calR^1$ not in the center, and denote
$\Tr_\calA(\alpha)=a\in \calO_L$ and $D=a^2-4$. The centralizer
$\calA_\alpha=\set{\beta\in
\calA|\beta\alpha=\alpha\beta}=L(\alpha)$ is a quadratic field
extension isomorphic to $L(\sqrt{D})$ (via the map $\alpha\mapsto
\frac{a+\sqrt{D}}{2}$). Let $\g=\g_\alpha\subset L$ be the set
$$\g=\set{u\in L|\exists x\in L,\;x+u\alpha\in \calR}.$$
\begin{lem}\label{l:divisor}
The set $\g$ is a fractional ideal containing $\calO_L$ (i.e.,
$\g^{-1}$ is an integral ideal). The ideal
$d=d_\alpha=\g_\alpha^{2}D\subset \calO_L$ is also an integral ideal.
\end{lem}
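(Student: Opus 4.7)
The plan is to realize $\Lambda := \calR \cap L(\alpha)$ as an $\calO_L$-order in the quadratic extension $L(\alpha) \cong L(\sqrt{D})$, and to view $\g$ as the image of the projection $p_2 : L \oplus L\alpha \to L$, $x + u\alpha \mapsto u$, restricted to $\Lambda$.

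First I would verify that $\g$ is a nonzero finitely generated $\calO_L$-submodule of $L$. Closure under addition and under scaling by $\calO_L$ is immediate, since the witnesses $x$ add and scale in tandem. The containment $\calO_L \subseteq \g$ follows because $c\alpha \in \calR$ for every $c \in \calO_L$ (with witness $x = 0$). Finite generation is inherited from $\Lambda$, which is a submodule of the finitely generated $\calO_L$-module $\calR$ over the Noetherian ring $\calO_L$, hence itself finitely generated; $\g = p_2(\Lambda)$ is a quotient and therefore also finitely generated. Thus $\g$ is a (nonzero) fractional ideal, and $\calO_L \subseteq \g$ is equivalent to $\g^{-1} \subseteq \calO_L$ being integral.

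For the integrality of $\g^2 D$, the key step is to show $u_1 u_2 D \in \calO_L$ for all $u_1, u_2 \in \g$. Fix witnesses $x_1, x_2$ and set $\beta_j := x_j + u_j \alpha \in \calR$. Since $\alpha^2 = a\alpha - 1$ and $\bar\alpha = a - \alpha$ (as $\Tr_\calA(\alpha) = a$, $N(\alpha) = 1$), and since $\bar\beta_j = \Tr(\beta_j) - \beta_j \in \calR$, the product $\beta_1 \bar\beta_2$ lies in $\calR$. Expanding in the basis $\{1, \alpha\}$ of $L(\alpha)$ yields the reduced trace
$$\Tr(\beta_1 \bar\beta_2) = \beta_1 \bar\beta_2 + \beta_2 \bar\beta_1 = 2 x_1 x_2 + 2 u_1 u_2 + a(x_1 u_2 + x_2 u_1) \in \calO_L,$$
and combining with $T_j := \Tr(\beta_j) = 2x_j + u_j a \in \calO_L$ gives the bilinear identity
$$u_1 u_2 D = u_1 u_2 (a^2 - 4) = T_1 T_2 - 2\, \Tr(\beta_1 \bar\beta_2).$$
Since the right side lies in $\calO_L$, so does $u_1 u_2 D$; hence $\g^2 D \subseteq \calO_L$.

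The main, though modest, obstacle is noticing the bilinear identity above. Conceptually it is the statement that the $\calO_L$-discriminant of $\Lambda$ equals $\g^2 D$: the suborder $\calO_L[\alpha] \subseteq \Lambda$ has discriminant $(D)$ and index ideal $\g^{-1}$, so the discriminant-index relation $d(\calO_L[\alpha]) = [\Lambda : \calO_L[\alpha]]^2 \cdot d(\Lambda)$ gives $d(\Lambda) = \g^2 D$, which is integral as the discriminant of an order. The direct computation in $\calA$ sketched above is however simpler and avoids having to set up the relative discriminant formalism in this context.
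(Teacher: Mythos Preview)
Your proof is correct and follows essentially the same approach as the paper: the paper shows $u^2D\in\calO_L$ via the identity $u^2D=\Tr_\calA(\beta)^2-4\calN_\calA(\beta)$ for $\beta=x+u\alpha\in\calR$, while you prove the polarized version $u_1u_2D=T_1T_2-2\Tr_\calA(\beta_1\bar\beta_2)$ directly. Your bilinear formulation is slightly cleaner in that it immediately gives $\g^2D\subseteq\calO_L$ without an implicit appeal to localization (in a Dedekind domain $\g^2$ is generated by the squares $u^2$, $u\in\g$), but the underlying idea is identical.
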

\begin{proof}
The first assertion is obvious. For the second part we show that any
$u\in \g$ satisfies $u^2D\in \calO_L$. Indeed, for any $u \in \g$
there is $\beta=x+u\alpha\in \calR$. Since we know that
$\calN_\calA(\beta)=x^2+u^2+xua\in \calO_L$ and
$\Tr_A(\beta)=2x+ua\in \calO_L$, we can deduce that
$$u^2D=(2x+ua)^2-4(x^2+xua+u^2)=\Tr_{\calA}(\beta)^2-4\calN_A(\beta)\in
\calO_L.$$
\end{proof}

For $D,d$ as above let $K=L(\sqrt{D})$ and denote by $\calO_K$ the
integers of $K$. Define the ring
 \[\calO_{D,d}=\set{\frac{t+u\sqrt{D}}{2}\in \calO_K : d|(u^2D)}.\]
This is an order inside $\calO_K$ \cite[Proposition 5.5]{Efrat87}
and its relative discriminant over $L$ is precisely the ideal $d$
(see Lemma \ref{l:disc}).
\begin{prop}\label{p:cor}
Let $\alpha\in \calR^1$, let $D=\Tr_{\calA}(\alpha)^2-4$ and
$d=d_\alpha\subset \calO_L$ as above. Under the map $\alpha\to
\frac{1+\sqrt{D}}{2}$, the order $\calR_\alpha=\calA_\alpha\cap
\calR$ is mapped onto $\calO_{D,d}$.
\end{prop}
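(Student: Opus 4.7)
The plan is to parameterize $\calA_\alpha = L(\alpha) = \{x+u\alpha : x,u\in L\}$ and compute the $L$-algebra isomorphism $\phi:L(\alpha)\to L(\sqrt{D})$ (determined by $\phi(\alpha)=\tfrac{a+\sqrt{D}}{2}$, as in the discussion preceding Lemma \ref{l:divisor}) explicitly as
\[
\phi(x+u\alpha)=\tfrac{(2x+ua)+u\sqrt{D}}{2}.
\]
The key observation is that by the very definition of $\g$, one has $u\in\g$ iff there exists $x\in L$ with $x+u\alpha\in\calR$, and whenever such an $x$ exists it is unique modulo $\calO_L=\calR\cap L$. So the proof splits into two steps: checking that elements of $\phi(\calR_\alpha)$ satisfy the conditions defining $\calO_{D,d}$, and conversely lifting an arbitrary element of $\calO_{D,d}$ back to $\calR_\alpha$.

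For the forward inclusion $\phi(\calR_\alpha)\subseteq\calO_{D,d}$: if $\beta=x+u\alpha\in\calR$, then $u\in\g$, so $u^2\in\g^2$ and hence $u^2D\in\g^2 D=d$, i.e.\ $d\mid u^2D$. Moreover the $K/L$-trace and norm of $\phi(\beta)$ are precisely $\Tr_\calA(\beta)$ and $\calN_\calA(\beta)$, both in $\calO_L$ because $\beta\in\calR$, so $\phi(\beta)\in\calO_K$; combined with the divisibility just shown, $\phi(\beta)\in\calO_{D,d}$.

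For the reverse inclusion, take $(t+u\sqrt{D})/2\in\calO_{D,d}$. The condition $u^2D\in d=\g^2D$ gives $u^2\in\g^2$, and unique factorization of fractional ideals in the Dedekind domain $\calO_L$ then forces $u\in\g$. Fix any $x_0\in L$ with $\beta_0:=x_0+u\alpha\in\calR$ and set $t_0=2x_0+ua=\Tr_\calA(\beta_0)\in\calO_L$. Setting $x=(t-ua)/2$ gives $\phi(x+u\alpha)=(t+u\sqrt{D})/2$, and $x+u\alpha\in\calR$ will follow once $(t-t_0)/2\in\calO_L$, since then $x+u\alpha=\beta_0+(t-t_0)/2\in\calR+\calO_L=\calR$. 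The two norms $(t^2-u^2D)/4$ and $(t_0^2-u^2D)/4$ both lie in $\calO_L$, so subtracting and writing $s:=t-t_0$ gives the relation $s^2+2st_0\in 4\calO_L$.

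I expect the main obstacle to be the localization argument extracting $s\in 2\calO_L$ from this relation, since the naive bound $s^2\in 2\calO_L$ only gives $v_\mathfrak{p}(s)\geq\lceil e/2\rceil$ at primes $\mathfrak{p}\mid 2$ of ramification $e=v_\mathfrak{p}(2)$. The cross term $2st_0$ is what saves the day: fixing such $\mathfrak{p}$ and setting $k=v_\mathfrak{p}(s)$, compare $v_\mathfrak{p}(s^2)=2k$ with $v_\mathfrak{p}(2st_0)\geq e+k$ in the inequality $v_\mathfrak{p}(s^2+2st_0)\geq 2e$. If $2k<e+k$, i.e.\ $k<e$, the left-hand side has valuation exactly $2k<2e$, a contradiction; otherwise $k\geq e$ directly. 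Either way $v_\mathfrak{p}(s)\geq e$, so $s\in 2\calO_L$ and the required lift exists, completing the proof that $\phi$ maps $\calR_\alpha$ onto $\calO_{D,d}$.
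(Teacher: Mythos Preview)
Your proof is correct and follows essentially the same route as the paper's: both directions are argued identically, with the reverse inclusion reduced to showing $t\equiv t_0\pmod{2\calO_L}$ from $t^2-t_0^2\in 4\calO_L$ (with $t,t_0\in\calO_L$). The paper simply asserts this last implication without justification, so your localization argument at primes above $2$ in fact supplies the detail the paper omits.
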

\begin{proof}
Denote by $\calO_\alpha$ the image of $\calA_\alpha\cap \calR$ under
this map, so
\[\calO_\alpha=\set{\frac{t+u\sqrt{D}}{2}\in
L(\sqrt{D})|\frac{t+u(2\alpha-a)}{2}\in \calR}.\]
The condition $\frac{t+u(2\alpha-a)}{2}\in \calR$ implies that
$t=\Tr_{\calA}(\frac{t+u(2\alpha-a)}{2})\in \calO_L$ and that $u\in
\g$. Note that for any $u\in L$ we have the equivalence
$d|(u^2D)\Leftrightarrow \g^2D|(u^2)(D)\Leftrightarrow
\g^2|(u)^2\Leftrightarrow u\in \g$. Hence $\calO_\alpha\subset
\calO_{D,d}$.

For the other direction let $\frac{t+u\sqrt{D}}{2}\in \calO_{D,d}$.
In particular $u\in \g$ and hence there is $\beta=x+u\alpha\in
\calR$. Let $\tilde{t}=\Tr_{\calA}(\beta)\in \calO_L$ then
$\beta=\frac{\tilde{t}-ua}{2}+u\alpha$ and hence
$4\calN_A(\beta)=\tilde{t}^2-u^2D\in 4\calO_L$. But from the
definition of $\calO_{D,d}$ we also know $t^2-u^2D\in4\calO_L$,
hence $t^2-\tilde{t}^2\in 4\calO_L$ and $t\equiv
\tilde{t}\pmod{2\calO_L}$. Now
$\frac{t+u(2\alpha-a)}{2}-\frac{\tilde{t}+u(2\alpha-a)}{2}=\frac{t-\tilde{t}}{2}
\in
\calO_L\subset \calR$, and hence $\frac{t+u(2\alpha-a)}{2}\in \calR$
and $\frac{t+u\sqrt{D}}{2}\in \calO_\alpha$.
\end{proof}

\begin{prop}\label{p:rank}
With the above notation assume that $\iota_j(D)\in\bbR$ is positive
for $j=1,\ldots, m_0$ and negative for $j=m_0+1,\ldots, n$ for some
$1\leq m_0\leq m$. Then $\calO_{D,d}^1$ is a free group of rank
$m_0$.
\end{prop}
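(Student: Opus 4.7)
The plan is to reduce the statement to Dirichlet's unit theorem applied to the quadratic extension $K=L(\sqrt{D})$, combined with an analysis of the kernel of the relative norm $N_{K/L}:\calO_K^*\to\calO_L^*$. Since $\calO_{D,d}$ is an order of $\calO_K$ with $[\calO_K:\calO_{D,d}]<\infty$, the group $\calO_{D,d}^1$ will be a finite-index subgroup of $\calO_K^1$, so proving that the rank of the latter is $m_0$ suffices.

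First I would read off the signature of $K$ from the signs of $\iota_j(D)$. Since $\iota_j(D)>0$ for $j\le m_0$, each such embedding extends to two real embeddings of $K$ (sending $\sqrt{D}\mapsto\pm\sqrt{\iota_j(D)}$); since $\iota_j(D)<0$ for $j>m_0$, each such embedding extends to a single pair of complex-conjugate embeddings. So $K$ has $r_1=2m_0$ real places and $r_2=n-m_0$ complex places, and Dirichlet's theorem gives
\[\rank\calO_K^*=r_1+r_2-1=n+m_0-1,\qquad\rank\calO_L^*=n-1.\]

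Next I would analyse the image of the relative norm. At a place $\iota_j$ with $j>m_0$ the local norm equals $|\sigma_j(x)|^2>0$, so $N_{K/L}(\calO_K^*)$ is contained in the subgroup $U_+\subset\calO_L^*$ of units positive at every such place. Conversely, $N_{K/L}(\varepsilon)=\varepsilon^2$ for $\varepsilon\in\calO_L^*\subset\calO_K^*$, producing the chain
\[(\calO_L^*)^2\;\subset\;N_{K/L}(\calO_K^*)\;\subset\;U_+\;\subset\;\calO_L^*\]
in which the outer inclusion has index $2^n$. Hence all four groups have rank $n-1$, and $\rank\calO_K^1=(n+m_0-1)-(n-1)=m_0$. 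Passing to the finite-index suborder then gives $\rank\calO_{D,d}^1=m_0$.

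For the freeness assertion I would observe that the torsion of $\calO_K^1$ lies in the roots of unity of $K$, which reduce to $\{\pm 1\}$ because $m_0\ge 1$ provides $K$ with a real embedding. This residual $2$-torsion is absorbed when one passes to the $\PSL(2,\bbR)^d$ setting in which $\Gamma$ sits, leaving the free abelian group of rank $m_0$ claimed. The whole argument rests only on the elementary identity $N_{K/L}(\varepsilon)=\varepsilon^2$, so the main point worth double-checking is the careful accounting of signatures and signs at each place, rather than any deep input: in particular, neither class field theory nor the Hasse norm theorem is needed here.
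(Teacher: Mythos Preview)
Your argument via Dirichlet's unit theorem and the rank of the kernel of the relative norm is correct and is the standard proof of this fact; the paper itself gives no argument at all, deferring entirely to Efrat \cite[proof of Theorem 5.7]{Efrat87}, where essentially the same signature-and-norm computation is carried out. Your remark on the residual $\{\pm 1\}$ torsion is also apt: strictly speaking $-1\in\calO_{D,d}^1$, so ``free of rank $m_0$'' must be read modulo the center, which is indeed how the group is used downstream (in the definition of $\reg(\calO_{D,d}^1)$ and in Proposition~\ref{p:vol}).
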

\begin{proof}
See \cite[proof of Theorem 5.7]{Efrat87}.
\end{proof}

\begin{defn}
Let $\epsilon_1,\epsilon_2,\ldots,\epsilon_{m_0}$ be generators for
$\calO_{D,d}^1$. For each $j=1,\ldots, m_0$ choose one place of
$L(\sqrt{D})$ above $\iota_j$ (that we also denote by $\iota_j$).
Define the regulator $\reg(\calO_{D,d}^1)$ as the absolute value of
the determinant of the $m_0\times m_0$ matrix given by
$a_{i,j}=\log|\iota_i(\epsilon_j)|$.
\end{defn}

\begin{prop}\label{p:vol}
Let $\alpha\in\calR^1$ and denote by $D=\Tr_{\calA}(\alpha)^2-4$,
and $d=\g_\alpha^2D$ as above. Then $\vol(\Gamma_\gamma\bs
G_\gamma)=\reg(\calO_{D,d}^1)$ where $\gamma=\iota(\alpha)$.
\end{prop}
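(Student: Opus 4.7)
The plan is to reduce the volume computation to the definition of the regulator by analyzing $G_\gamma$ factor-by-factor and using Proposition~\ref{p:cor} to identify $\Gamma_\gamma$ with $\calO_{D,d}^1$.

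First I would decompose the centralizer. Writing $\gamma=(\iota_1(\alpha),\ldots,\iota_d(\alpha))$, one has $G_\gamma=\prod_{j=1}^d C_j$, where $C_j$ is the centralizer of $\iota_j(\alpha)$ in $\PSL(2,\bbR)$. For $j\le m_0$, where $\iota_j(D)>0$ and $\iota_j(\alpha)$ is hyperbolic, $C_j$ is a split torus isomorphic to $\bbR$, parametrized by the logarithm of the positive eigenvalue; for $m_0<j\le d$, where $\iota_j(D)<0$ and $\iota_j(\alpha)$ is elliptic, $C_j\cong S^1$ is compact. One equips $G_\gamma$ with the product Haar measure normalized as in Section~\ref{s:trace}.

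Next I would translate $\Gamma_\gamma$ to $\calO_{D,d}^1$. By Proposition~\ref{p:cor} the map $\alpha\mapsto\frac{1+\sqrt{D}}{2}$ identifies $\calR_\alpha=\calA_\alpha\cap\calR$ with $\calO_{D,d}$, and this restricts to an isomorphism on norm-$1$ subgroups. Applying $(\iota_1,\ldots,\iota_d)$ and quotienting by $\pm 1$ yields $\Gamma_\gamma$ as the image of $\calO_{D,d}^1$ in $G_\gamma$. By Proposition~\ref{p:rank} the free part of $\calO_{D,d}^1$ has rank $m_0$, while its torsion lies in the roots of unity of $L(\sqrt D)$ and hence projects trivially onto the noncompact factors.

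Then the volume computation factors. The image of $\Gamma_\gamma$ in $\prod_{j\le m_0}C_j\cong\bbR^{m_0}$ is a lattice of full rank $m_0$ (with kernel a finite subgroup sitting inside the compact part $\prod_{j>m_0}C_j$). Under the chosen parametrization, an element $\epsilon\in\calO_{D,d}^1$ corresponds to the vector $(\log|\iota_1(\epsilon)|,\ldots,\log|\iota_{m_0}(\epsilon)|)$, after fixing extensions of the $\iota_j$ to $L(\sqrt D)$. For generators $\epsilon_1,\ldots,\epsilon_{m_0}$ of the free part, this lattice has covolume $|\det(\log|\iota_i(\epsilon_j)|)|=\reg(\calO_{D,d}^1)$. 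With the trace-formula normalization of Haar measures on the $C_j$, the compact-factor volumes and the torsion contributions are absorbed into the standard constants, giving $\vol(\Gamma_\gamma\bs G_\gamma)=\reg(\calO_{D,d}^1)$.

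The delicate point is the bookkeeping of Haar-measure normalizations on the $C_j$, to ensure that the compact factors and the torsion in $\calO_{D,d}^1$ do not introduce any spurious multiplicative constant. This is precisely the analogous computation carried out in \cite[Theorem~5.7]{Efrat87}, which I would invoke for the numerical details, following the same convention already used implicitly in the trace formula of Section~\ref{s:trace}.
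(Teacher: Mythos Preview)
Your sketch is correct and follows the same underlying argument as the paper, which in fact simply cites \cite[Proposition~6.1]{Efrat87} without further elaboration. Your outline of the factor-by-factor decomposition of $G_\gamma$ and the identification of $\Gamma_\gamma$ with $\calO_{D,d}^1$ via Proposition~\ref{p:cor} is exactly the content behind that citation (note the paper points to Proposition~6.1 rather than Theorem~5.7 of Efrat for the volume computation itself).
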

\begin{proof}
See \cite[Proposition 6.1]{Efrat87}
\end{proof}

\begin{prop}\label{p:form}
For $\Gamma$ as above and $t\in \calO_L$
\[F_\Gamma(t)= \sum_{d|(D)} \reg(\calO_{D,d}^1)l(\calO_{D,d}).\]
where the sum is over all ideals $d$ such that $\frac{(D)}{d}$ is a
square of an integral ideal, and $l(\calO_{D,d})$ is the number of
conjugacy classes of centralizers corresponding to $\calO_{D,d}$.
\end{prop}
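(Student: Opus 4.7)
\textbf{Proof proposal for Proposition \ref{p:form}.} The plan is to organize the sum defining $F_\Gamma(t)$ by stratifying $\Gamma$-conjugacy classes $\{\gamma\}$ with $\Tr(\gamma) = t$ according to the arithmetic invariant $(D, d)$ attached to a lift $\alpha \in \calR^1$, and then to invoke the preceding Propositions \ref{p:cor} and \ref{p:vol} to read off each summand.

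First, given such a class, pick $\alpha \in \calR^1$ with projection $\gamma$, so that $D := \Tr_\calA(\alpha)^2 - 4 = t^2 - 4$ depends only on $t$. Attach to $\alpha$ the ideal $d := d_\alpha = \g_\alpha^{2} D$ of Lemma \ref{l:divisor}; that lemma guarantees $d \subset \calO_L$ is integral and that $\g_\alpha^{-1} \subset \calO_L$, so that $(D)/d = \g_\alpha^{-2}$ is indeed the square of an integral ideal — matching the summation condition in the statement. I would next verify that $(D,d)$ is an invariant of the $\Gamma$-conjugacy class: conjugation by $\beta \in \calR^1$ carries $\calA_\alpha$ onto $\calA_{\beta\alpha\beta^{-1}}$ and preserves intersection with $\calR$, hence sends $\calR_\alpha$ isomorphically (as an $\calO_L$-order) onto $\calR_{\beta\alpha\beta^{-1}}$, which by Proposition \ref{p:cor} forces the invariant $d$ to be unchanged.

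Second, for a fixed $(D,d)$, Proposition \ref{p:cor} identifies $\calR_\alpha$ with the order $\calO_{D,d}$, and Proposition \ref{p:vol} then evaluates
\[
\vol(\Gamma_\gamma \bs G_\gamma) = \reg(\calO_{D,d}^1),
\]
a quantity depending only on the pair $(D,d)$ and not on the particular class in the $d$-stratum. Substituting this into the definition of $F_\Gamma(t)$ and collecting terms gives
\[
F_\Gamma(t) = \sum_{\substack{d \mid (D) \\ (D)/d = \square}} \reg(\calO_{D,d}^1)\,\cdot\, \#\bigl\{\{\gamma\} : \Tr(\gamma) = t,\ d_\alpha = d\bigr\}.
\]
By the definition of $l(\calO_{D,d})$ — the number of $\Gamma$-conjugacy classes whose centralizer-order is $\calO_{D,d}$ — the cardinality in the second factor is exactly $l(\calO_{D,d})$, and the asserted formula follows.

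The only mildly subtle step is the well-definedness of $d_\alpha$ on conjugacy classes, which reduces to an elementary check with $\calR^1$-conjugation; everything else is bookkeeping around the already-established Propositions \ref{p:cor} and \ref{p:vol} and the constraints from Lemma \ref{l:divisor}. One should also confirm that the lift $\alpha$ (defined only up to sign, since $\Gamma$ is in $\PSL$) does not affect $D$ or $d$: both are even-degree invariants in $\alpha$, so the ambiguity $\alpha \leftrightarrow -\alpha$ is harmless.
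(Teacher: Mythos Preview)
Your overall architecture matches the paper's: stratify the conjugacy classes with $\Tr(\gamma)=t$ by the ideal $d=d_\alpha$, use Proposition~\ref{p:cor} to identify $\calR_\alpha\cong\calO_{D,d}$, and use Proposition~\ref{p:vol} to replace $\vol(\Gamma_\gamma\bs G_\gamma)$ by $\reg(\calO_{D,d}^1)$. The reduction to
\[
F_\Gamma(t)=\sum_{d\mid (D)}\reg(\calO_{D,d}^1)\cdot\#\bigl\{\{\gamma\}:\Tr(\gamma)=t,\ d_\alpha=d\bigr\}
\]
is fine.

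The gap is in your last step. You write that the cardinality on the right is $l(\calO_{D,d})$ ``by definition,'' describing $l(\calO_{D,d})$ as the number of $\Gamma$-conjugacy classes whose centralizer-order is $\calO_{D,d}$. That is not the definition in the statement: $l(\calO_{D,d})$ counts conjugacy classes of \emph{centralizers} (equivalently, $\calR^1$-conjugacy classes of optimal embeddings $\calO_{D,d}\hookrightarrow\calR$), not conjugacy classes of \emph{elements}. A priori the map $\{\alpha\}\mapsto\{\calR_\alpha\}$ from element-classes with trace $t$ and invariant $d$ to centralizer-classes could fail to be injective: two non-conjugate $\alpha,\alpha'$ with the same trace could have conjugate (even equal) centralizers, which would make the element-count strictly larger than $l(\calO_{D,d})$.

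The paper closes this gap with an extra input you omit: by \cite[Lemma~7.4]{Efrat87}, two distinct elements of $\Gamma$ with the same trace do not commute. Hence if $\{\alpha\}\neq\{\alpha'\}$ both have trace $t$, their centralizers cannot be conjugate (otherwise, after conjugating, $\alpha'$ would lie in $\calR_\alpha$ and commute with $\alpha$), giving injectivity. Surjectivity is automatic since any centralizer-class corresponding to $\calO_{D,d}$ contains the image of $\tfrac{t+\sqrt{D}}{2}$, an element of trace $t$ with the correct $d$. You should insert this argument; without it the identification of the element-count with $l(\calO_{D,d})$ is unjustified.
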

\begin{proof}
Recall that
\[F_\Gamma(t)=\mathop{\sum_{\{\gamma\}\in\Gamma^\sharp}}_{\Tr(\gamma)=t}
\vol(\Gamma_\gamma\bs G_\gamma).\]
We can assume that $t_j=\iota_j(t)$ for some
$t\in\calO_L$ and think on $F_\Gamma$ as a function on $\calO_L$.
Replace the sum over conjugacy classes $\{\gamma\}\in\Gamma^\sharp$
to a sum over conjugacy classes $\{\alpha\}\in {\calR^1}^\sharp$.
Next for $\gamma=\iota(\alpha)$, by proposition \ref{p:vol}, we have
that $\vol(\Gamma_\gamma\bs G_\gamma)=\reg(\calO_{D,d}^1)$ where
$D=t^2-4$ and $d|(D)$ is the ideal corresponding to $\alpha$ as in
proposition \ref{p:cor}. Consequently we can write
\[F_\Gamma(t)=\sum_{d|(D)}
\reg(\calO_{D,d}^1)\sharp\set{\{\alpha\}|\Tr_{\calA}(\alpha)=t,\; d_\alpha=d}\]
where the sum is over all integral ideals $d|(D)$ such that $(D)/d$
is a square of an integral ideal. Now consider the map sending each
conjugacy class $\{\alpha\}$ to the conjugacy class of its
centralizer $\{\calR_\alpha^1\}$. Note that two different elements
of $\Gamma$ with the same trace do not commute \cite[Lemma
7.4]{Efrat87}, hence this map is a bijection of the set
$$\set{\{\alpha\}|\Tr_{\calA}(\alpha)=t,\; d_\alpha=d},$$
and the set of conjugacy classes of centralizers corresponding to $\calO_{D,d}$.
Consequently
we have $\sharp\set{\{\alpha\}|\Tr_{\calA}(\alpha)=t,\;
d_\alpha=d}=l(\calO_{D,d})$.
\end{proof}

\subsection{Second counting argument}
Fix $\alpha\in \calR^1$ (not in the center), let $K=L(\alpha)$ be
the corresponding quadratic extension and let $\calO=\calR\cap K$.
Then by Proposition \ref{p:cor} we have $\calO\cong \calO_{D,d}$
where $D=\Tr_{\calA}(\alpha)^2-4$ and $d=d_\alpha$ as in Lemma
\ref{l:divisor}. Note that if $\alpha'\in \calR^1$ is conjugate (in
$\calR^1$) to $\alpha$, then $D'=D$ and $d'=d$, so the corresponding
rings are also the same. Recall that $l(\calO_{D,d})$ is the number of
$\calR^1$-conjugacy classes of centralizers that correspond to
$\calO_{D,d}$. In the notation of Eichler (see
\cite{Eichler56,Shimizu}) this is the number of $\calR^1$-conjugacy
classes of optimal embeddings of $\calO$ into the maximal order
$\calR$. We now wish to give an upper bound for this number, or
rather to the product $l(\calO_{D,d})\reg(\calO_{D,d}^1)$.

Let $\calC(\calO)$ denote the class group (or the Picard group) of
$\calO$ and denote by $\sharp\calC(\calO)=h(\calO)$ the class
number. Let $H$ denote the group of two sided ideals of $\calR$ and
$H'$ denote the subgroup of all ideals generated by $\calO$-ideals.
Then $[H:H']=\prod_{\calP|\mathfrak{d}}(1-(\frac{\calO}{\calP}))$
where $(\frac{\calO}{\calP})$ stands for Artin's symbol and
$\mathfrak{d}$ denotes the discriminant of $\calA$ over $L$
\cite[equation 47]{Shimizu}. In particular, $[H:H']$ is bounded by a
constant $c(\mathfrak{d})$ depending only on $\mathfrak{d}$.

\begin{prop}\label{p:ClassForm}
\[l(\calO)\leq C_1 \frac{h(\calO)}{[\calO^*:\calO^1\calO_L^*]}.\]
where $C_1=c(\mathfrak{d})[\calR^*:\calR^1\calO_L^*]$ is a constant
depending only on the quaternion algebra.
\end{prop}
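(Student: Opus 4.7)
The plan is to invoke Eichler's classical correspondence between optimal embeddings and ideal classes, and then carefully track the passage from $\calR^*$-orbits to $\calR^1$-orbits. To an optimal embedding $\phi:\calO\hookrightarrow \calR$ we associate a right $\calO$-ideal in $\calR$, for instance the annihilator-type module $I_\phi=\{x\in\calR:\phi(\calO)x\subseteq \calR\}$. The resulting class in $\calC(\calO)$ is invariant under $\calR^*$-conjugation of $\phi$, giving a well-defined map from $\calR^*$-conjugacy classes of optimal embeddings into $\calC(\calO)$.

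Next I would show the fibers of this map are small. Eichler's key observation is that two embeddings produce the same ideal class precisely when their associated two-sided $\calR$-ideals differ by something generated from $\calO$; consequently each fiber is a torsor under a subquotient of $H/H'$ and so has cardinality at most $[H:H']\leq c(\mathfrak{d})$. This yields
\[
l^*(\calO)\leq c(\mathfrak{d})\,\frac{h(\calO)}{[\calO^*:\calO^1\calO_L^*]},
\]
where $l^*(\calO)$ denotes the number of $\calR^*$-conjugacy classes of optimal embeddings and the denominator reflects the stabilizer of $\phi$ under $\calR^*$-conjugation, which (modulo the central $\calO_L^*$) is generated by $\phi(\calO^*)$; restricting to norm-one units when comparing to the class number $h(\calO)$ produces exactly this index.

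Finally, I would pass from $\calR^*$-conjugacy to $\calR^1$-conjugacy. Since $\calO_L^*$ lies in the center and acts trivially by conjugation, the surjection of orbit sets has fibers of size at most $[\calR^*:\calR^1\calO_L^*]$, so that
\[
l(\calO)\leq [\calR^*:\calR^1\calO_L^*]\cdot l^*(\calO)\leq c(\mathfrak{d})\,[\calR^*:\calR^1\calO_L^*]\,\frac{h(\calO)}{[\calO^*:\calO^1\calO_L^*]},
\]
which is the claimed bound with $C_1=c(\mathfrak{d})[\calR^*:\calR^1\calO_L^*]$ depending only on $\calA$ and $\calR$.

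The routine parts are the algebra of ideals and Eichler's fiber computation, which are already in Shimizu and Eichler. The main obstacle is the careful bookkeeping of the various unit-group actions: one must verify that the two different indices $[\calO^*:\calO^1\calO_L^*]$ and $[\calR^*:\calR^1\calO_L^*]$ arise in the correct positions (denominator versus numerator) and that the central $\calO_L^*$ is not double-counted when restricting from $\calR^*$ to $\calR^1$ on the one side and from $\calO^*$ to $\calO^1$ on the other. Once this is done consistently, the inequality follows cleanly.
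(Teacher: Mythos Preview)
Your proposal follows the same route as the paper---both arguments rest on Eichler's correspondence between optimal embeddings and ideal classes as packaged by Shimizu---and it reaches the correct final inequality. The paper's proof is much shorter because it simply quotes the exact identity (Shimizu, equation~45)
\[
l(\calO)=\frac{[\calR^*:\calR^1\calO_L^*]}{2[\calO^*:\calO^1\calO_L^*]}\,\kappa,
\]
where $\kappa$ counts pairs $(\mathfrak M,\mathfrak a)\in H/H'\times\calC(\calO)$ with $\mathfrak M\mathfrak a$ principal, and then bounds $\kappa\le[H:H']\,h(\calO)\le c(\mathfrak d)\,h(\calO)$.

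There is, however, a bookkeeping slip in your intermediate steps (of exactly the kind you flagged as the main obstacle). The index $[\calO^*:\calO^1\calO_L^*]$ does \emph{not} appear in the passage from $\calR^*$-classes of embeddings to $\calC(\calO)$; that step gives only $l^*(\calO)\le c(\mathfrak d)\,h(\calO)$, with no denominator. The index arises instead in your step~4: since $\calR^1$ is normal in $\calR^*$ and the $\calR^*$-stabilizer of $\phi$ is $\phi(\calO^*)$, a single $\calR^*$-orbit breaks into exactly
\[
[\calR^*:\calR^1\phi(\calO^*)]=\frac{[\calR^*:\calR^1\calO_L^*]}{[\calO^*:\calO^1\calO_L^*]}
\]
many $\calR^1$-orbits (use $\calR^1\phi(\calO^*)\cap\phi(\calO^*)=\phi(\calO^1\calO_L^*)$). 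So your step~4 overcounts by $[\calO^*:\calO^1\calO_L^*]$ and your step~3 compensates by an unjustified division; the errors cancel, but the argument as written is not correct at the intermediate stage. Once you move the denominator to its rightful place, your sketch is essentially a rederivation of Shimizu's formula and matches the paper's proof.
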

\begin{proof}
Let $\kappa$ be the number of pairs $(\mathfrak M,\mathfrak a)\in
H/H'\times \calC(\calO)$ such that the ideal $\mathfrak M \mathfrak
a=\calR\mu$ is principal. We then have  \cite[equation
45]{Shimizu}\footnote{In \cite{Shimizu} it is stated for $K/L$ an
imaginary extension, but the same proof holds here without
changes.},
\[l(\calO)=\frac{[\calR^*:\calR^1\calO_L^*]}{2[\calO^*:\calO^1\calO_L^*]}
\kappa.\]
 Now use the bound $\kappa\leq [H:H']h(\calO)\leq c(\mathfrak{d})h(\calO)$ to
conclude the proof.
\end{proof}

\begin{prop}\label{p:bound1}
\[l(\calO_{D,d})\reg(\calO_{D,d}^1)\leq C_2
\sqrt{N_{L/\bbQ}(d)}\res_{s=1}\zeta_K(s)\]
where $\zeta_K(s)$ is the Dedekind Zeta function corresponding to
$K$ and $C_2$ is a constant depending only on the quaternion
algebra.
\end{prop}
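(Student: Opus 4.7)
The plan is to apply Proposition \ref{p:ClassForm} and then reduce the bound on $h(\calO_{D,d})\reg(\calO_{D,d}^1)$ to a statement about the maximal order $\calO_K$ of $K=L(\sqrt{D})$, where the analytic class number formula converts it into a residue of $\zeta_K$. Write $\calO=\calO_{D,d}$ and let $\mathfrak{f}\subset\calO_L$ denote the conductor of $\calO$ in $\calO_K$. By Lemma \ref{l:disc} the relative discriminant of $\calO/\calO_L$ equals $d$, so $d = \mathfrak{f}^2\,d_{K/L}$, and combining this with the tower formula $|d_K| = |d_L|^2 N_{L/\bbQ}(d_{K/L})$ gives
\[
\frac{\sqrt{N_{L/\bbQ}(d)}}{\sqrt{|d_K|}} = \frac{N_{L/\bbQ}(\mathfrak{f})}{|d_L|},
\]
so the two quantities agree up to an $\calA$-dependent constant and a factor $N_{L/\bbQ}(\mathfrak{f})$.

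Next I would invoke the Dirichlet analytic class number formula $\res_{s=1}\zeta_K(s) = c_K\, h_K\reg_K/\sqrt{|d_K|}$, where $c_K = 2^{r_1}(2\pi)^{r_2}/w_K$ depends only on the signature of $K$ (and hence only on $\calA$ once we note that all such signatures are bounded in terms of $[L:\bbQ]$). Combined with Proposition \ref{p:ClassForm}, the target inequality reduces to proving
\[
\frac{h(\calO)\reg(\calO^1)}{[\calO^*:\calO^1\calO_L^*]} \ll N_{L/\bbQ}(\mathfrak{f})\, h_K\reg_K,
\]
with an implied constant depending only on $\calA$. The first ingredient for this is the standard order-to-maximal-order class number formula
\[
h(\calO) = \frac{h_K}{[\calO_K^*:\calO^*]}\, N_{L/\bbQ}(\mathfrak{f}) \prod_{\mathfrak{p}|\mathfrak{f}}\left(1 - \left(\tfrac{K}{\mathfrak{p}}\right)\tfrac{1}{N\mathfrak{p}}\right),
\]
whose Euler product is trivially bounded above by an absolute constant. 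The second is the comparison of regulators: $\calO^1$ sits as a finite-index subgroup of $\calO_K^1$, so $\reg(\calO^1) = [\calO_K^1:\calO^1]\reg(\calO_K^1)$, and by Proposition \ref{p:rank} the group $\calO_K^1$ has rank $m_0$, with $\reg(\calO_K^1)$ comparable to $\reg_K$ via an index depending only on the signature of $K$.

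Assembling these two estimates produces a bound of the form $h(\calO)\reg(\calO^1) \ll \bigl([\calO_K^1:\calO^1]/[\calO_K^*:\calO^*]\bigr)\,N_{L/\bbQ}(\mathfrak{f})\, h_K\reg_K$, and the proof will be completed once we verify that the ratio $[\calO_K^1:\calO^1]/[\calO_K^*:\calO^*]$ is dominated by $[\calO^*:\calO^1\calO_L^*]$ up to an $\calA$-dependent constant; this follows by comparing the short exact sequences $1\to\calO^1\to\calO^*\xrightarrow{N}\calO_L^*$ and $1\to\calO_K^1\to\calO_K^*\xrightarrow{N}\calO_L^*$, using that the norm image of $\calO_L^*$ in either sequence has finite index bounded in terms of $\calA$. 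The main obstacle is precisely this bookkeeping of the three unit indices $[\calO_K^*:\calO^*]$, $[\calO_K^1:\calO^1]$, and $[\calO^*:\calO^1\calO_L^*]$, together with verifying that the Euler-product correction in the conductor formula does not spoil the inequality; once those are dealt with, the rest is a routine assembly of the analytic class number formula, the discriminant tower formula, and Proposition \ref{p:ClassForm}.
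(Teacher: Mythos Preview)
Your outline is broadly correct and reaches the same endpoint (analytic class number formula plus the conductor--discriminant relation $d=\f_0^2 D_{K/L}$), but it diverges from the paper in how the troublesome index $[\calO^*:\calO^1\calO_L^*]$ is handled, and it contains one incorrect claim.

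The paper does not separate $h(\calO)$ and $\reg(\calO^1)$ and then chase three unit indices. Instead it invokes Proposition~\ref{p:reg}, namely
\[
\reg(\calO^*)=\frac{\reg(\calO^1)\,\reg(\calO_L^*)}{[\calO^*:\calO^1\calO_L^*]},
\]
which absorbs the index from Proposition~\ref{p:ClassForm} in one stroke: $l(\calO)\reg(\calO^1)\le C_1\,h(\calO)\reg(\calO^*)/\reg(\calO_L^*)$. The product $h(\calO)\reg(\calO^*)$ is then bounded directly by $2^{n+1}N_{L/\bbQ}(\f_0)\,h(\calO_K)\reg(\calO_K^*)$ via Corollary~\ref{c:classes} (which uses the general order formula $h(\calO)\reg(\calO^*)=[(\calO_K/\f)^*:(\calO/\f)^*]\,h_K\reg_K$ and a norm argument to control the group index). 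This bypasses your comparison of $\reg(\calO^1)$ with $\reg(\calO_K^1)$ and the short-exact-sequence bookkeeping entirely. Your bookkeeping does in fact work---one checks $[\calO_K^*:\calO^*][\calO^*:\calO^1\calO_L^*]=[\calO_K^*:\calO_K^1\calO_L^*][\calO_K^1:\calO^1]$, so the ratio you worry about is exactly $1/[\calO_K^*:\calO_K^1\calO_L^*]\le 1$---but the paper's route is cleaner.

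The genuine gap is your assertion that the Euler product $\prod_{\mathfrak p\mid\f}\bigl(1-(\tfrac{K}{\mathfrak p})\tfrac{1}{N\mathfrak p}\bigr)$ is ``trivially bounded above by an absolute constant.'' It is not: the inert factors $1+1/N\mathfrak p$ can accumulate, and the product can be as large as $(\log\log N_{L/\bbQ}(\f))^{[L:\bbQ]}$. This would only cost an extra $N_{L/\bbQ}(d)^\epsilon$, harmless for the eventual application in Proposition~\ref{p:count2}, but it means your argument as written does not yield the proposition with $C_2$ depending only on the quaternion algebra. The paper's Corollary~\ref{c:classes} avoids this by bounding the group index $[(\calO_K/\f)^*:(\calO/\f)^*]$ directly rather than passing through the Euler-product form of the order class number formula.
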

\begin{proof}
Denote by $\reg(\calO_{D,d}^*)$ and $\reg(\calO_L^*)$ the regulators
of $\calO_{D,d}^*$ and $\calO_L^*$ respectively. Combining the bound
on $l(\calO_{D,d})$ (Proposition \ref{p:ClassForm}), and the
relation
$\reg(\calO_{D,d}^*)=\frac{\reg(\calO_{D,d}^1)\reg(\calO_L^*)}{[\calO_{D,d}
^*:\calO_{D,d}^1\calO_L^*]}$
(Proposition \ref{p:reg}) we get
\[l(O_{D,d})\reg(O_{D,d}^1)\leq C_1
\frac{h(\calO_{D,d})\reg(\calO_{D,d}^*)}{\reg(\calO_L^*)}.\]
with $C_1$ the constant in Proposition \ref{p:ClassForm}.

Let $D_{K/L}\subseteq \calO_L$ denote the relative discriminant of
$K/L$, let $\f=\set{x\in\calO_K| xO_K\subseteq \calO_{D,d}}$ denote
the conductor of $\calO_{D,d}$ and let $\f_0=\f\cap \calO_L$. We can
bound (see Corollary \ref{c:classes})
\[h(\calO_{D,d})\reg(\calO_{D,d}^*)\leq
2^{n+1}N_{L/\bbQ}(\f_0)h(\calO_K)\reg(\calO_K^*).\]
Now use the class number formula (see e.g.,  \cite[Corollary
5.11]{Neukirch99})
\[h(\calO_K)\reg(\calO_K^*)=\frac{2\sqrt{D_K}}{2^{n+m}\pi^{n-m}}\res_{s=1}
\zeta_K(s),\]
 to get that
\[l(\calO_{D,d})\reg(\calO_{D,d}^1)\leq \frac{C_1}{\reg(O_L^*)}
N_{L/\bbQ}(\f_0)\sqrt{D_K}\res_{s=1}\zeta_K(s).\]
Finally replace $D_K=\frac{N_{L/\bbQ}(D_{K/L})}{D_L^2}$ (Proposition
\ref{p:disc}) and $D_{K/L}f_0^2=d$ (Proposition \ref{p:disc2}) to
conclude that
\[l(\calO_{D,d})\reg(\calO_{D,d}^1)\leq \frac{C_1}{\reg(\calO_L^*)D_L}
\sqrt{N_{L/\bbQ}(d)}\res_{s=1}\zeta_K(s).\]
\end{proof}

\begin{proof}[Proof of Proposition \ref{p:count2}]
By Propositions \ref{p:form} and \ref{p:bound1} we get
\[F_\Gamma(t)= \sum_{d|(D)} \reg(\calO_{D,d}^1)l(\calO_{D,d})\ll
\sum_{d|(D)}\sqrt{N_{L/\bbQ}(d)}\res_{s=1}\zeta_K(s)\]
with $D=t^2-4$ and $K=L(\sqrt{D})$. For any $d|(D)$ we can bound
$N_{L/\bbQ}(d)\leq N_{L/\bbQ}(D)$ so
\[F_\Gamma(t)\ll \sqrt{N_{L/Q}(D)}\res_{s=1}\zeta_K(s)\sharp\set{a\subset
\calO_K|a^2|(D)},\]
The number of ideal divisors of $(D)$ is bounded by
$O(N_{L/Q}(D)^\epsilon)$ and for the residue of the Zeta function we
have \cite[Theorem 1]{Louboutin}
$$\res_{s=1}\zeta_K(s)\leq (\log(D_K))^{2n+1}.$$
Since $D_K\leq N_{L/\bbQ}(D)\ll\prod_j|(t_j^2-4)|$ indeed
\[F_\Gamma(t)\ll_\epsilon N_{L/Q}(D)^{1/2+\epsilon}\ll
\prod_j|(t_j^2-4)|^{1/2+\epsilon}.\]
\end{proof}

\section{Application for Selbergs Zeta function}
We conclude with the proof of Corollary \ref{c:zeta} from Theorem
\ref{t:gap} giving a zero free region for the Selberg Zeta functions
$Z_m(s,\G)$. For $\Gamma\subset \PSL(2,\bbR)^2$ irreducible without
torsion and any $m\geq 1$ the corresponding Zeta function is given
by
\begin{equation}
Z_m(s,\Gamma)=\prod_{\{\gamma\}^*_\Gamma}\mathop{\prod_{\nu=0}^\infty}_{|i|<
m }(1-\epsilon_\gamma^iN(\gamma)^{-s-\nu})^{-1}
\end{equation}
where the product is over all primitive conjugacy classes in $\Gamma$ that
are hyperbolic in the first coordinate and
elliptic in the second. Using the trace formula with wight $(0,m)$
(as in section \ref{s:trace}) one shows that $Z_m(s,\Gamma)$ is
entire (except when $m=1$ where it has a simple pole at $s=1$) and
satisfies a functional equation relating $s$ and $1-s$. Also
$Z_m(s,\G)$ has trivial zeros at the negative integers and spectral
zeros at the points $s_k$ such that $\pi_{s_k}\otimes\frakD_m$
appears in the decomposition of $L^2(\G\bs G)$. Now Theorem
\ref{t:gap} implies that for any $t_0>5/6$ there are only finitely
many $\pi_k=\pi_{s_{k}}\otimes \frakD_m$ in the decomposition of
$L^2(\G\bs G)$ with $1/2<s_{k,1}<t_0$. In particular for
sufficiently large $m_0$ the half plane $\Re(s)>t_0$ is a zero free
region for all the $Z_m(s,\G)$'s with $m>m_0$.

\appendix
\section{Algebraic background}\label{s:algebraic}
In this appendix we provide some background and collect a number of
results from algebraic number theory that we have used. The main
reference for this section is \cite{Neukirch99}.
\subsection{Discriminants}
Let $K/L$ be an extension of number fields and let $\calO_K$ and
$\calO_L$ denote the corresponding rings of integers. For any basis
$\{x_j\}$ of $K/L$ the discriminant of the basis is defined as the
determinant of the matrix $\Tr_{K/L}(x_ix_j)$. An order
$\calO\subseteq\calO_K$ is a subring that has rank $[K:\bbQ]$ as a
$\bbZ$ module. For any order $\calO\subseteq\calO_K$, the relative
discriminant $d=d(\calO/\calO_L)$ is the ideal in $\calO_L$
generated by the discriminants of all bases for $K/L$ that lie in
$\calO$. When $\calO=\calO_K$ is the full ring of integers we denote
$d(\calO_K/\calO_L)=D_{K/L}$ the relative discriminant of $K/L$. The
relative discriminant of $K/\bbQ$ (respectively $L/\bbQ$) is a
principal ideal in $\bbZ$, the generator of this ideal denoted by
$D_K$ (respectively $D_L$) is the discriminant of the field. We then
have the following relation:
\begin{prop} \label{p:disc}
\[D_K=D_L^{[K:L]}N_{L/\bbQ}(D_{K/L})\]
\end{prop}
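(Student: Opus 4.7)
The plan is to prove this classical tower formula via the different ideal, which linearises the multiplicativity that discriminants almost but not quite enjoy. Recall that for a finite separable extension $F'/F$ of number fields, the inverse different $\mathfrak{d}_{F'/F}^{-1}$ is the fractional $\calO_{F'}$-ideal $\{x\in F': \Tr_{F'/F}(x\calO_{F'})\subseteq \calO_F\}$, and the relative discriminant satisfies $D_{F'/F}=N_{F'/F}(\mathfrak{d}_{F'/F})$.

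The main step is to establish the transitivity of the different in the tower $\bbQ\subseteq L\subseteq K$, namely
\[
\mathfrak{d}_{K/\bbQ}=\mathfrak{d}_{K/L}\cdot\bigl(\mathfrak{d}_{L/\bbQ}\calO_K\bigr).
\]
This can be proved by unwinding the trace-pairing definition: $x\in \mathfrak{d}_{K/\bbQ}^{-1}$ iff $\Tr_{K/\bbQ}(x\calO_K)\subseteq\bbZ$, and using $\Tr_{K/\bbQ}=\Tr_{L/\bbQ}\circ\Tr_{K/L}$ one checks this is equivalent to $\Tr_{K/L}(x\calO_K)\subseteq \mathfrak{d}_{L/\bbQ}^{-1}$, i.e.\ $x\in \mathfrak{d}_{K/L}^{-1}\cdot\mathfrak{d}_{L/\bbQ}^{-1}\calO_K$.

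Once the transitivity of the different is in hand, I would apply $N_{K/\bbQ}$ to both sides to get
\[
D_K=N_{K/\bbQ}(\mathfrak{d}_{K/\bbQ})=N_{K/\bbQ}(\mathfrak{d}_{K/L})\cdot N_{K/\bbQ}\bigl(\mathfrak{d}_{L/\bbQ}\calO_K\bigr).
\]
For the first factor, transitivity of the ideal norm gives $N_{K/\bbQ}(\mathfrak{d}_{K/L})=N_{L/\bbQ}(N_{K/L}(\mathfrak{d}_{K/L}))=N_{L/\bbQ}(D_{K/L})$. For the second factor, since $\mathfrak{d}_{L/\bbQ}\subseteq \calO_L$ one has $N_{K/L}(\mathfrak{a}\calO_K)=\mathfrak{a}^{[K:L]}$ for any ideal $\mathfrak{a}$ of $\calO_L$, hence $N_{K/\bbQ}(\mathfrak{d}_{L/\bbQ}\calO_K)=N_{L/\bbQ}(\mathfrak{d}_{L/\bbQ})^{[K:L]}=D_L^{[K:L]}$. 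Multiplying yields the claimed identity.

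The technical obstacle is the transitivity of the different itself; the rest is a formal manipulation of norms. An alternative route that sidesteps any global freeness issues (since $\calO_K$ need not be a free $\calO_L$-module) is to localise at each rational prime $p$: over $\bbZ_p$, $\calO_L\otimes\bbZ_p$ is a product of complete DVRs and $\calO_K\otimes\bbZ_p$ is free over each factor, so one can compute discriminants directly from bases. The tower formula for discriminants of free extensions ($\mathrm{disc}(C/A)=N_{B/A}(\mathrm{disc}(C/B))\cdot\mathrm{disc}(B/A)^{[C:B]}$ for free extensions $A\subseteq B\subseteq C$) then gives the identity at each prime, and by the product formula the global identity follows.
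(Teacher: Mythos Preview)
Your argument is correct; the proof via transitivity of the different $\mathfrak{d}_{K/\bbQ}=\mathfrak{d}_{K/L}\cdot(\mathfrak{d}_{L/\bbQ}\calO_K)$ followed by taking norms is the standard one. The paper itself offers no proof here and simply cites \cite[Corollary 2.10]{Neukirch99}, where exactly this approach (different plus norm transitivity) is carried out, so you have effectively reproduced the referenced argument.
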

\begin{proof}
 \cite[Corollary 2.10]{Neukirch99}
\end{proof}

Assume now that $K/L$ is a quadratic extension. Let
$\calO\subseteq\calO_K$ be an order. The conductor of $\calO$ is
defined by
$$\f=\f(\calO)=\set{x\in\calO_K|x\calO_K\subset \calO}.$$
This is an ideal in $\calO_K$ that measures how far is the order
$\calO$ from the full ring of integers. Denote by
$\f_0=\f\cap\calO_L$ the ideal in $\calO_L$ lying under it. We then
have the following:
\begin{prop}\label{p:disc2}
Let $d=d(\calO/\calO_L)$ and $D_{K/L}=d(\calO_K/\calO_L)$ denote the
relative discriminants of $\calO$ and $\calO_K$ over $\calO_L$
respectively. Then $d=\f_0^2D_{K/L}$.
\end{prop}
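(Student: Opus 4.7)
\textbf{Proof proposal for Proposition \ref{p:disc2}.} Since the relative discriminants and the conductor ideal $\f_0$ are all ideals in the Dedekind domain $\calO_L$, the identity $d = \f_0^2 D_{K/L}$ may be verified one prime at a time. The plan is therefore to localize at an arbitrary prime $\mathfrak{p}$ of $\calO_L$ and reduce to a transparent computation over a discrete valuation ring. After localization we may replace $\calO_L$ by its localization at $\mathfrak{p}$ (which is a DVR), and $\calO_K$, $\calO$, $\f$, $\f_0$ by their corresponding localizations; the formation of relative discriminants commutes with such localization.

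In this local setting $\calO_K$ is a free $\calO_L$-module of rank two, so we can pick $\omega\in\calO_K$ with $\calO_K=\calO_L\oplus\calO_L\omega$. The first key step is to describe $\calO$ explicitly: any order $\calO$ with $\calO_L\subseteq\calO\subseteq\calO_K$ is an $\calO_L$-submodule, and since $\calO_K/\calO_L\cong\calO_L$ is a DVR the submodules sitting between them are exactly $\calO_L+\mathfrak{a}\omega$ as $\mathfrak{a}$ ranges over ideals of $\calO_L$. A direct check using $\omega^2=t\omega-n$ (with $t=\Tr_{K/L}(\omega)$, $n=N_{K/L}(\omega)\in\calO_L$) shows $\calO_L+\mathfrak{a}\omega$ is closed under multiplication for every $\mathfrak{a}$, so every order has this form. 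Writing $\calO=\calO_L+\mathfrak{a}\omega$, the second step is the conductor calculation: for $x=c+d\omega\in\calO_K$ the condition $x\calO_K\subseteq\calO$ is equivalent to $x\in\calO$ and $x\omega\in\calO$, and a one-line computation in the basis $\{1,\omega\}$ shows this is equivalent to $c,d\in\mathfrak{a}$. Hence $\f=\mathfrak{a}\calO_K$, and consequently $\f_0=\f\cap\calO_L=\mathfrak{a}$.

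With this in hand, the third step is an explicit discriminant computation. Let $f$ be a generator of the principal ideal $\f_0$ in our local DVR. Then $\{1,f\omega\}$ is an $\calO_L$-basis of $\calO$, and
\[
d\bigl(\{1,f\omega\}\bigr)=\det\begin{pmatrix}2 & f\Tr(\omega)\\ f\Tr(\omega) & f^2\Tr(\omega^2)\end{pmatrix}=f^2\,d\bigl(\{1,\omega\}\bigr),
\]
with $d(\{1,\omega\})$ generating $D_{K/L}$ in the localization. Therefore $d=f^2 D_{K/L}=\f_0^2 D_{K/L}$ locally, and since this holds at every prime of $\calO_L$ we conclude $d=\f_0^2 D_{K/L}$ globally.

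The only subtle point is the structural step that every intermediate $\calO_L$-order in the local case is of the form $\calO_L+\mathfrak{a}\omega$ with $\f=\mathfrak{a}\calO_K$; once that is established, the discriminant identity is a one-step linear-algebra calculation and the localization wrap-up is standard. I expect this module-theoretic step, rather than the final determinant computation, to be the main (though still routine) obstacle.
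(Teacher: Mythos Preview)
Your argument is correct, but it follows a different route from the paper's. The paper works \emph{globally} throughout, using the explicit presentation $K=L(\sqrt{D})$: it attaches to each order $\calO$ the fractional ideal $\g(\calO,D)=\{u\in L:\exists\,t\in\calO_L,\ (t+u\sqrt D)/2\in\calO\}$, proves separately (Lemmas~\ref{l:J} and~\ref{l:disc}) that $\calO=\{(t+u\sqrt D)/2\in\calO_K:u\in\g\}$ and that $d(\calO/\calO_L)=\g^2D$, and then checks directly that $\f_0=\g(\calO_K,D)^{-1}\g(\calO,D)$. Your proof instead localizes to a DVR, where $\calO_K$ becomes free with basis $\{1,\omega\}$, identifies every intermediate order as $\calO_L+\mathfrak a\,\omega$, reads off $\f_0=\mathfrak a$, and computes the discriminant by a change-of-basis determinant. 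The trade-off: your approach is basis-free in the sense that it does not single out a global $\sqrt D$, and it is closer to the standard textbook treatment of conductor--discriminant relations; the paper's approach avoids localization entirely and dovetails with the $\g$-ideal machinery already set up (and used elsewhere) for the orders $\calO_{D,d}$ arising from the quaternion algebra. Both are short once the structural step is in place; yours is slightly more portable, the paper's slightly more self-contained within its own notation.
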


This result is well known, however since we could not find a good
reference we will include a short proof. We first need a few
preparations. Fix $D\in \calO_L$ such that $K=L(\sqrt{D})$ and
define
$$\g=\g(\calO,D)=\set{u\in L|\exists t\in\calO_L\; \mathrm{s.t.}\;
\frac{t+u\sqrt{D}}{2}\in\calO}.$$
Then $\g$ is a fractional ideal containing $\calO_L$ and $\g^2D$ is
an integral ideal. (Note that the ideal $\g$ depends on the choice
of $D$ but the product $\g^2D$ does not).
\begin{lem}\label{l:J}
We have $\calO=\set{\frac{t+u\sqrt{D}}{2}\in\calO_K|u\in\g}$.
\end{lem}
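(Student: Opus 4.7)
The plan is to prove the two inclusions separately; both are short and follow from the definitions together with the observation that $\calO_K\cap L=\calO_L\subseteq\calO$.

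For the inclusion $\calO\subseteq\{\tfrac{t+u\sqrt{D}}{2}\in\calO_K\mid u\in\g\}$, I would take $\beta\in\calO\subseteq\calO_K$ and write it uniquely as $\beta=\tfrac{t+u\sqrt{D}}{2}$ with $t,u\in L$. Then $t=\Tr_{K/L}(\beta)\in\calO_L$ automatically, since $\beta$ is an algebraic integer. The existence of such a $t\in\calO_L$ realizing $\beta\in\calO$ is precisely the condition defining $u\in\g$, so this direction is immediate.

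For the reverse inclusion, I would start with $\beta=\tfrac{t+u\sqrt{D}}{2}\in\calO_K$ such that $u\in\g$. By definition of $\g$, there exists $t'\in\calO_L$ with $\beta'=\tfrac{t'+u\sqrt{D}}{2}\in\calO$. The key observation is that the difference $\beta-\beta'=\tfrac{t-t'}{2}$ lies in $K$ on the one hand (from the two elements of $\calO_K$) and in $L$ on the other (the $\sqrt{D}$ parts cancel). Hence $\tfrac{t-t'}{2}\in\calO_K\cap L=\calO_L\subseteq\calO$, and adding this back to $\beta'\in\calO$ shows $\beta\in\calO$.

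Neither direction presents any real obstacle; the only subtle point is the step $\calO_K\cap L=\calO_L$, which is standard, and the implicit use that $\calO_L\subseteq\calO$ (true because $\calO$ is an order of $\calO_K$ and so contains $1$, hence all of $\calO_L$ as a subring of $K$ integral over $\bbZ$). I would present the argument in one short paragraph organized around the two inclusions.
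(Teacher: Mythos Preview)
Your argument is correct and follows the same outline as the paper: both prove the nontrivial inclusion by picking $\beta'\in\calO$ with the same $u$ and showing $\beta-\beta'=\frac{t-t'}{2}\in\calO_L\subseteq\calO$. Your route to $\frac{t-t'}{2}\in\calO_L$ via the observation $\beta-\beta'\in\calO_K\cap L=\calO_L$ is in fact a bit cleaner than the paper's, which instead computes $N_{K/L}(\beta_j)=\frac{t_j^2-u^2D}{4}\in\calO_L$ for $j=1,2$, subtracts to get $t_1^2\equiv t_2^2\pmod{4\calO_L}$, and from this deduces $t_1\equiv t_2\pmod{2\calO_L}$ (implicitly using that $\calO_L$ is integrally closed, the same fact underlying your $\calO_K\cap L=\calO_L$).

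One small caution: your justification for $\calO_L\subseteq\calO$ (``contains $1$, hence all of $\calO_L$ as a subring of $K$ integral over $\bbZ$'') is not valid as stated --- a $\bbZ$-order of $\calO_K$ containing $1$ need not contain $\calO_L$ (e.g.\ $\bbZ[\sqrt{D}]$ need not contain $\calO_L$). In the paper's context the orders in play are the $\calO_{D,d}$, which visibly contain $\calO_L$, and the paper uses $\calO_L\subseteq\calO$ in the same step without comment; but you should state it as a standing hypothesis rather than try to derive it from the bare definition of a $\bbZ$-order.
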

\begin{proof}
The inclusion
$\calO\subseteq\set{\frac{t+u\sqrt{D}}{2}\in\calO_K|u\in\g}$ is
obvious. To show the other direction assume that
$\beta_1=\frac{t_1+u\sqrt{D}}{2}\in\calO_K$ with $u\in\g$. Then
there is $t_2\in\calO_L$ such that
$\beta_2=\frac{t_2+u\sqrt{D}}{2}$. For both $j=1,2$ we have that
$N_{K/L}(\beta_j)=\frac{t_j^2-u^2D}{4}\in\calO_L$. In particular
$N_{K/L}(\beta_1)-N_{K/L}(\beta_2)=\frac{t_1^2-t_2^2}{4}\in\calO_L$
so $t_1\equiv t_2\pmod{2\calO_L}$. Now since the difference
$\beta_1-\beta_2=\frac{t_1-t_2}{2}\in\calO_L\subseteq\calO$ and
$\beta_2\in\calO$ then $\beta_1\in\calO$ as well.
\end{proof}

\begin{lem}\label{l:disc}
The relative discriminant $d=d(\calO)$ of $\calO$ over $L$ satisfies
$d=\g^2D$.
\end{lem}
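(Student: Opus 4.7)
The plan is to verify both inclusions $d \subseteq \g^2 D$ and $\g^2 D \subseteq d$ directly using Lemma \ref{l:J}. For any pair $x_i = \frac{t_i + u_i\sqrt{D}}{2} \in \calO$ (with $t_i \in \calO_L$ and $u_i \in \g$), a short calculation in the quadratic extension $K/L$ gives
\[\mathrm{disc}(x_1,x_2) = (x_1\bar{x}_2-\bar{x}_1 x_2)^2 = D\, v^2, \qquad v := \frac{t_2 u_1 - t_1 u_2}{2}\in L.\]
Thus $d = \g^2 D$ will follow once I show (a) that $v \in \g$ for every such pair, and (b) that as $x_1,x_2$ vary over $\calO$, the values $v^2$ generate the whole ideal $\g^2$.

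For (a), which yields the inclusion $d \subseteq \g^2 D$, note that $\calO_L \subseteq \calO$, so the conjugate $\bar{x}_2 = t_2 - x_2$ lies in $\calO$ and hence $x_1 \bar{x}_2 \in \calO$ by closure of $\calO$ under multiplication. Writing $x_1 \bar{x}_2$ in the form $\frac{A + B\sqrt{D}}{2}$, a direct expansion shows $B = v$, and Lemma \ref{l:J} applied to $x_1 \bar{x}_2$ forces $v \in \g$. Hence $v^2 \in \g^2$ and $\mathrm{disc}(x_1, x_2) \in D\g^2$.

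For (b), giving the reverse inclusion $\g^2 D \subseteq d$, I take $x_1 = 1$ and $x_2 = \frac{t + u\sqrt{D}}{2}$ with $u \in \g$ and with $t \in \calO_L$ chosen (by the defining property of $\g$) so that $x_2 \in \calO$; the formula specializes to $\mathrm{disc}(1, x_2) = u^2 D \in d$ for every $u \in \g$. It remains to check that the $\calO_L$-module generated by $\{u^2 : u \in \g\}$ is exactly $\g^2$. This is a local statement over the Dedekind ring $\calO_L$: at each prime $\mathfrak{p}$ one can pick $a \in \g$ generating the localization $\g_\mathfrak{p}$, whereupon $a^2 \in \{u^2:u\in\g\}$ generates $\g^2_\mathfrak{p}$, and equality follows globally by the local-global principle for fractional ideals.

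I do not anticipate a serious obstacle; the proof is algebraic bookkeeping once Lemma \ref{l:J} is in hand, with the one mild subtlety being the local-global step identifying the ideal generated by $\{u^2: u \in \g\}$ with the square ideal $\g^2$.
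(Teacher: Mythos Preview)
Your proposal is correct and follows essentially the same route as the paper: compute $\mathrm{disc}(x_1,x_2)=\bigl(\tfrac{t_1u_2-t_2u_1}{2}\bigr)^2 D$, use closure of $\calO$ under multiplication to show the bracketed quantity lies in $\g$, and specialize to $x_1=1$ for the reverse inclusion. You are in fact slightly more careful than the paper in two places --- using $x_1\bar{x}_2$ (rather than $x_1x_2$, whose $\sqrt{D}$-coefficient has a plus sign) and spelling out the local-global argument that $\{u^2:u\in\g\}$ generates $\g^2$, a point the paper simply asserts.
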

\begin{proof}
By definition, the relative discriminant of $\calO$ over $L$ is the
ideal generated by all elements of the form
$\frac{(t_1u_2-t_2u_1)^2D}{4}$ with
$x_j=\frac{t_j+u_j\sqrt{D}}{2}\in\calO$ such that $(x_1,x_2)$ is a
basis for $K/L$. Now notice that
\[\g=\set{\frac{(t_1u_2-t_2u_1)}{2}|x_j=\frac{t_j+u_j\sqrt{D}}{2}\in\calO}.\]
To see this, note that if $x_1,x_2\in\calO$ then the product
$x_1x_2\in\calO$ as well implying that
$\frac{(t_1u_2-t_2u_1)}{2}\in\g$. For the other direction for any
$u\in\g$ take $x_1=\frac{t+u\sqrt{D}}{2}\in\calO$ and
$x_2=1=\frac{2+0\sqrt{D}}{2}$. Consequently, the discriminant is the
ideal generated by $\set{x^2D|x\in \g}$ which is precisely $\g^2D$.
\end{proof}

\begin{proof}[Proof of Proposition \ref{p:disc2}]
Fix $D\in\calO_L$ such that $K=L(\sqrt{D})$. Let
$\g_1=\g(\calO_K,D)$ and $\g_2=\g(\calO,D)$. Then by Lemma
\ref{l:disc} we have that $D_{K/L}=D\g_1^2$ and $d=D\g_2^2$. It thus
remains to show that $\f_0=\g_1^{-1}\g_2$. Now, by definition
$\f_0=\set{x\in\calO_L|\forall \beta\in\calO_K,\;x\beta\in\calO},$
and by Lemma \ref{l:J} this is the same as $\set{x\in\calO_L|\forall
u\in\g_1,\;xu\in\g_2}=\g_1^{-1}\g_2$.
\end{proof}

\subsection{Regulators}\label{s:reg}
Let $K$ be number field with $r_1$ real places and $r_2$ (conjugate
pairs of) complex places. For any place $\nu$ of $K$ define the
corresponding norm by $\norm{x}_\nu=|\nu(x)|$ when $\nu$ is real and
$\norm{x}_\nu=|\nu(x)|^2$ when $\nu$ is complex. Let $\calO_K$
denote the ring of integers in $K$. By Dirichlet unit theorem the
group of units $\calO_K^*$ is a free group of rank $r=r_1+r_2-1$.
The regulator $\reg(\calO_K^*)$ of this group is the absolute value
of the determinant of the matrix
$a_{i,j}=(\log\norm{\epsilon_i}_{\nu_j})$ where $\nu_j$ goes over
$r$ out of the $r+1$ places and $\epsilon_1,\ldots,\epsilon_r$ are
generators for the group of units (this is independent of the choice
of generators or places). There is a geometric interpretation of the
regulator. Consider the logarithmic map from $\calO_K^*$ to
$\bbR^{d+1}$ sending
$$\epsilon\mapsto(\log(\norm{\epsilon}_{\nu_1}),\ldots,\log(\norm{\epsilon}_{
\nu_{r+1}})).$$
Then the image of $\calO_K$ is a lattice of rank $r$ inside
$\bbR^{r+1}$ with co-volume given by $\sqrt{r+1}\reg{\calO_K^*}$. If
$U\subset\calO_K^*$ is a subgroup of finite index, then it is also
of the same rank and we can define the regulator of $U$ in the same
way (by taking $\epsilon_1,\ldots,\epsilon_r$ to be generators for
$U$). We then have the relation $\reg(U)=[O_K^*:U]\reg(\calO_K^*)$
(one can see this by comparing the co-volumes of the corresponding
lattices).

Let $K/L$ be a quadratic extension of a totally real number field
$L$. Let $\calO$ be an order in $\calO_K$, then $\calO^*$ is a
subgroup of $\calO_K^*$ of finite index. Denote by
$\calO^1=\set{x\in\calO|N_{K/L}(x)=1}$ the group of (relative) norm
one elements in $\calO^*$. Let $[L:\bbQ]=n$ and let $1\leq m\leq n$
such that $K$ has $2m$ real places and $(n-m)$ pairs of complex
places. Then $\calO_K^1$ is a free group of rank $m$. Let
$\epsilon_1,\ldots,\epsilon_m$ be generators for $\calO^1$ and
consider the $m\times m$ matrix given by $\log(\nu_i(\epsilon_j))$
where $\nu_i$ goes over the real places of $K$, where from every
pair lying above the same place of $L$ we take only one place. The
regulator $\reg(\calO^1)$ is defined as the absolute value of the
determinant of this matrix.

Note that $\calO^1\calO_L^*$ is a subgroup of $\calO^*$ of finite
index. We have the following relation
\begin{prop}\label{p:reg}
\[\reg(\calO^*)=\frac{\reg(\calO^1\calO_L^*)}{[\calO^*:\calO^1\calO_L^*]}=
\frac{\reg(\calO^1)\reg(\calO_L^*)}{[\calO^*:\calO^1\calO_L^*]}.\]
\end{prop}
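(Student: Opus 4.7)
My plan is to establish the two equalities of Proposition \ref{p:reg} separately. The first equality $\reg(\calO^*) = \reg(\calO^1\calO_L^*)/[\calO^*:\calO^1\calO_L^*]$ is an instance of the general index--regulator relation: for any finite-index inclusion $U \subseteq V$ of full-rank subgroups of $\calO_K^*$, the logarithmic embedding sends $U$ and $V$ to full-rank lattices lying in the same $(n+m-1)$-dimensional hyperplane of $\bbR^{n+m}$, so their co-volumes, and hence their regulators, are in the ratio $[V:U]$. Applied with $U = \calO^1\calO_L^*$ and $V = \calO^*$ this gives the first equality at once.

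For the factorization $\reg(\calO^1\calO_L^*) = \reg(\calO^1)\reg(\calO_L^*)$, I will first observe that $\calO^1 \cap \calO_L^* \subseteq \{\pm 1\}$, since any common element $\zeta$ is a unit in $\calO_L$ with $N_{K/L}(\zeta) = \zeta^2 = 1$. Hence, picking fundamental units $\eta_1,\ldots,\eta_m$ for $\calO^1$ and $\zeta_1,\ldots,\zeta_{n-1}$ for $\calO_L^*$, their union is a $\bbZ$-basis for $\calO^1\calO_L^*$ modulo torsion, and $\reg(\calO^1\calO_L^*)$ equals the absolute value of the determinant of the $(n+m-1) \times (n+m-1)$ log matrix $M$ having these units as rows. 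I will then compute $\det M$ by exploiting a block structure: arrange the $n+m$ archimedean places of $K$ as $m$ pairs $(\nu_i,\bar\nu_i)$ of real places above the split real places $\mu_i$ of $L$, followed by the $n-m$ complex places. The norm-one condition $N_{K/L}(\eta)=1$ forces $\log|\nu_i(\eta)| = -\log|\bar\nu_i(\eta)|$ at each pair and $\log\|\eta\|_\nu = 0$ at each complex place, while for $\zeta \in \calO_L^*$ the values at the two embeddings of each real pair coincide. Adding each $\bar\nu_i$-column to the corresponding $\nu_i$-column then annihilates the $\eta$-rows in those columns, so after dropping a column to make $M$ square and reordering, the matrix becomes block-upper-triangular with an $m \times m$ diagonal block whose absolute determinant realizes $\reg(\calO^1)$ and an $(n-1) \times (n-1)$ block whose absolute determinant realizes $\reg(\calO_L^*)$. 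The factorization follows by the multiplicativity of block determinants.

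The main technical obstacle will be the careful bookkeeping of the factors of $2$ appearing in this computation: the column operation on each of the $m$ real pairs doubles the $\zeta$-entry in the surviving column, and the convention $\|x\|_\nu = |\nu(x)|^2$ at the $n-m$ complex places likewise doubles the $\zeta$-entries there. These powers of $2$ must be absorbed consistently into the normalizations defining $\reg(\calO^1)$, $\reg(\calO_L^*)$, and $\reg(\calO^*)$, so that the product formula holds exactly as stated. Once these conventions are aligned and the block-upper-triangular structure is in hand, the factorization is immediate from block determinant multiplicativity, and combining it with the index--regulator identity of the first paragraph completes the proof.
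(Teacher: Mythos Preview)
The paper does not actually prove this proposition---it simply cites Costa--Friedman \cite{CostaFriedman91}---so your direct argument already goes further than the paper does. Your structure is the standard one and is essentially what lies behind the cited reference: the first equality is the general index--covolume relation for full-rank sublattices of $\calO_K^*$, and for the second you block-triangularize the log-matrix via the column operations $\nu_i\mapsto\nu_i+\bar\nu_i$, using that $\log|\nu_i(\eta)|=-\log|\bar\nu_i(\eta)|$ for $\eta\in\calO^1$ while $\log|\nu_i(\zeta)|=\log|\bar\nu_i(\zeta)|$ for $\zeta\in\calO_L^*$. The orthogonality $\calO^1\cap\calO_L^*\subseteq\{\pm1\}$ and the block structure are exactly right.

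Your expectation that the factors of $2$ can be ``absorbed consistently into the normalizations'' is, however, too optimistic. With the paper's fixed definitions---$\reg(\calO^1)$ via the $m\times m$ matrix of $\log|\nu_i(\eta_j)|$ at one real place per split pair, $\reg(\calO_L^*)$ the usual regulator of the totally real field $L$, and $\reg(\calO^1\calO_L^*)$ via the $K$-places with $\|\cdot\|_\nu=|\cdot|^2$ at complex places---carrying out your column operations honestly yields
\[
\reg(\calO^1\calO_L^*)\;=\;2^{\,n-1}\,\reg(\calO^1)\,\reg(\calO_L^*),
\]
not the identity without the power of $2$. After the $m$ column additions the $\zeta$-entries in the surviving $\nu_i$-columns double, and the complex-place columns already carry a $2$; once one column is dropped, the resulting $(n-1)\times(n-1)$ block in the $\zeta$-rows has determinant $2^{n-1}\reg(\calO_L^*)$. (You can check this by hand for $n=m=2$: the $3\times3$ determinant equals $2\,\reg(\calO^1)\reg(\calO_L^*)$.) The normalizations are not free parameters you can tune---they are specified in Section~\ref{s:reg}---so this factor is genuinely there.

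Thus the proposition as literally stated appears to be off by $2^{n-1}$, and your argument proves the corrected version. This is entirely harmless for the paper: the relation is used only in Proposition~\ref{p:bound1}, inside an inequality whose constant $C_2$ depends only on the quaternion algebra and hence absorbs any fixed power of $2$ depending on $n$.
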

\begin{proof}
See \cite[proof of Theorem 1]{CostaFriedman91}
\end{proof}

\subsection{Class numbers}
For a number field $K$ the Class group, $\calC(\calO_K)$, is the
quotient of the group of all fractional ideals of $\calO_K$ with the
subgroup of principal ideals. This is a finite group and its order
$h(\calO_K)$ is the class number of $K$. The Class number formula
relates the class number with other algebraic invariants of the
number field.
\begin{prop}\label{t:class}
\[h(\calO_K)=\frac{w
\sqrt{D_K}}{2^{r_1+r_2}\pi^{r_2}\reg(\calO_K)}\res_{s=1}\zeta_K(s),\]
where $w$ is the number of roots of unity contained in $K$, $D_K$ is
the absolute discriminant, $\zeta_K(s)$ is the Dedekind zeta
function, the numbers $r_1,r_2$ are the number of real and complex
embeddings of $K$, and $\reg(\calO_K^*)$ is the regulator of
$\calO_K$.
\end{prop}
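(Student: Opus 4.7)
The plan is to prove the class number formula by the classical analytic route: decompose $\zeta_K(s)$ into partial zeta functions indexed by ideal classes, and then compute the residue at $s=1$ of each partial zeta function by a lattice-point count in the Minkowski embedding, exploiting the Dirichlet unit theorem to handle the unit ambiguity. Since every partial zeta function turns out to have the same residue, summing over the $h(\calO_K)$ classes yields the stated formula.

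First I would write
\[
\zeta_K(s) \;=\; \sum_{[\mathfrak a]\in\calC(\calO_K)} \zeta_K(s,[\mathfrak a]),\qquad \zeta_K(s,[\mathfrak a]) \;=\! \sum_{\mathfrak b\,\text{integral},\,[\mathfrak b]=[\mathfrak a]} \frac{1}{N_{K/\bbQ}(\mathfrak b)^s},
\]
and fix an integral representative $\mathfrak a^{-1}$ of the inverse class. Multiplication by $\mathfrak a^{-1}$ sets up a bijection between integral ideals in $[\mathfrak a]$ and principal ideals $(\alpha)$ with $\alpha\in\mathfrak a^{-1}\setminus\{0\}$, taken modulo the unit action of $\calO_K^*$. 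This converts the sum into a count of nonzero $\alpha\in\mathfrak a^{-1}$ modulo $\calO_K^*$, weighted by $N_{K/\bbQ}(\mathfrak a)^s/|N_{K/\bbQ}(\alpha)|^s$.

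Next I would embed $\mathfrak a^{-1}$ as a full-rank lattice $\Lambda$ in $K_\infty=\bbR^{r_1}\times\bbC^{r_2}$ with covolume $\sqrt{|D_K|}\,N_{K/\bbQ}(\mathfrak a^{-1})$. The main analytic step is to show
\[
\#\bigl\{\alpha\in\Lambda\setminus\{0\}\,\text{mod}\,\calO_K^*:|N_{K/\bbQ}(\alpha)|\leq X\bigr\} \;=\; \kappa\,X+O(X^{1-1/n}),
\]
with an explicit constant $\kappa$. This reduces to computing the volume, in a fundamental domain for the $\calO_K^*$-action on $K_\infty^\times$ cut out by $|N|\leq X$, of the region defined by these inequalities. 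Parameterizing $K_\infty^\times$ via the logarithm map, the fundamental domain for the unit action splits as a torus factor (of volume $(2\pi)^{r_2}$ from the complex arguments and $2^{r_1}$ from the real signs, divided by the $w$ roots of unity) times the fundamental parallelepiped of the lattice $\log\calO_K^*\subset\bbR^{r_1+r_2-1}$, whose covolume is exactly $\sqrt{r_1+r_2}\,\reg(\calO_K^*)$ up to the standard normalization. The hyperbola region $|N|\leq X$ contributes a factor $X$ after the logarithmic change of variables. Pulling everything together gives
\[
\kappa \;=\; \frac{2^{r_1}(2\pi)^{r_2}\,\reg(\calO_K^*)}{w\,\sqrt{|D_K|}\,N_{K/\bbQ}(\mathfrak a^{-1})},
\]
so that each $\zeta_K(s,[\mathfrak a])$ has residue $\kappa$ at $s=1$ independently of $[\mathfrak a]$; the factor $N_{K/\bbQ}(\mathfrak a)^s$ cancels the $N_{K/\bbQ}(\mathfrak a^{-1})$ exactly.

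Summing the $h(\calO_K)$ equal residues and solving for $h(\calO_K)$ produces the desired identity. The main technical obstacle is the lattice-point count in the norm-bounded fundamental domain: one needs a clean choice of fundamental domain for $\calO_K^*$ acting on the $|N|=1$ hypersurface so that the region $|N|\leq X$ has piecewise smooth boundary with controlled surface area, in order to apply a Lipschitz version of Minkowski's theorem and obtain the $O(X^{1-1/n})$ error. Everything else is bookkeeping of the archimedean normalizations, which must be performed carefully to recover the exact constants $2^{r_1+r_2}\pi^{r_2}$ and $w$ appearing in the statement.
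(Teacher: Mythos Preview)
Your sketch is correct and follows the classical analytic proof of Dirichlet's class number formula: decompose $\zeta_K$ into partial zeta functions over ideal classes, convert each to a lattice-point count in the Minkowski embedding modulo units, and compute the leading constant via the volume of a fundamental domain for the $\calO_K^*$-action. The constants you obtain match the statement once one writes $2^{r_1}(2\pi)^{r_2}=2^{r_1+r_2}\pi^{r_2}$.

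The paper, however, does not prove this proposition at all; it simply cites \cite[Corollary 5.11]{Neukirch99}. This is a standard result from algebraic number theory, and the paper treats it as background in the appendix. What you have written is essentially the proof one finds in Neukirch (or in Lang, Marcus, etc.), so you have supplied exactly the argument the paper defers to the literature. There is nothing to correct; your outline is more than the paper itself provides.
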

\begin{proof}
See e.g., \cite[Corollary 5.11]{Neukirch99}
\end{proof}

For an order $\calO\subseteq\calO_K$, the fractional ideals do not
necessarily form a group (since not all ideals are invertible).
However one can consider the group of all invertible ideals in
$\calO$. The Picard group of $\calO$ is then the quotient of the
group of all invertible fractional ideals of $\calO$ with the
subgroup of principal ideals. This group is also finite and its
order $h(\calO)$ is called the class number of $\calO$.

The class numbers $h(\calO)$ and $h(\calO_K)$ are related by the
following formula \cite[Theorem 12.12]{Neukirch99}
\begin{equation}\label{e:classes}
h(\calO)=h(\calO_K)\frac{[\calO_K/\f\calO_K)^*:(\calO/\f\calO)^*]}{[
\calO_K^*:\calO^*]}.
\end{equation}
where $\f\subset\calO_K$ is the conductor of $\calO$.
 If we consider the product of the class number and the regulator we get
\begin{prop}
\[h(\calO)\reg(\calO^*)=[\calO_K/\f\calO_K)^*:(\calO/\f\calO)^*]
h(\calO_K)\reg(\calO_K^*).\]
\end{prop}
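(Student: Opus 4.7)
The plan is to combine equation (\ref{e:classes}) with the index-scaling property of the regulator for finite-index subgroups of $\calO_K^*$, and observe that the factor $[\calO_K^*:\calO^*]$ appears once in the denominator of the class number formula and once in the numerator of the regulator formula, so it cancels in the product.

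First I would recall from section \ref{s:reg} the geometric interpretation of the regulator as (up to a factor of $\sqrt{r+1}$) the co-volume of the image of the group of units under the logarithmic embedding into $\bbR^{r+1}$. Since $\calO^*$ is a subgroup of finite index in $\calO_K^*$ of the same rank, its logarithmic image is a sublattice of the same rank whose co-volume is multiplied by the index, giving the identity
\[
\reg(\calO^*)=[\calO_K^*:\calO^*]\,\reg(\calO_K^*).
\]
Second, I would multiply equation (\ref{e:classes}) through by $\reg(\calO^*)$ and substitute the displayed identity on the right-hand side. The index $[\calO_K^*:\calO^*]$ coming from the regulator identity cancels against the denominator $[\calO_K^*:\calO^*]$ in (\ref{e:classes}), leaving precisely
\[
h(\calO)\reg(\calO^*)=[(\calO_K/\f\calO_K)^*:(\calO/\f\calO)^*]\,h(\calO_K)\reg(\calO_K^*),
\]
which is the claim.

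There is no serious obstacle here; the proposition is essentially a bookkeeping corollary that repackages the class number formula (\ref{e:classes}) in a form where the dependence on $[\calO_K^*:\calO^*]$ has been absorbed into the regulator. The only point that deserves care is checking that the regulator scaling law $\reg(U)=[\calO_K^*:U]\reg(\calO_K^*)$ is valid for the subgroup $U=\calO^*$ rather than only for subgroups generated by explicit fundamental units, but this follows at once from the co-volume interpretation recalled above since $\calO^*$ and $\calO_K^*$ both have full rank $r=r_1+r_2-1$ in the logarithmic embedding.
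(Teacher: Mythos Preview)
Your proposal is correct and matches the paper's own proof essentially verbatim: the paper simply says to use equation~(\ref{e:classes}) together with the relation $\reg(\calO^*)=[\calO_K^*:\calO^*]\reg(\calO_K^*)$, which is precisely what you do.
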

\begin{proof}
Use the above formula together with the relation
$\reg(\calO^*)=[\calO_K^*:\calO^*]\reg(\calO_K^*)$.
\end{proof}

In the previous setup with $K/L$ a quadratic extension and
$\f_0=\f\cap\calO_L$ this leads to the following bound:
\begin{cor}\label{c:classes}
 \[h(\calO)\reg(\calO^*)\leq 2^{n+1}N_{L/\bbQ}(\f_0)h(\calO_K)\reg(\calO_K^*)\]
\end{cor}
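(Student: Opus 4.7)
The plan is to start from the identity
\[h(\calO)\reg(\calO^*) = [(\calO_K/\f\calO_K)^*:(\calO/\f\calO)^*]\, h(\calO_K)\reg(\calO_K^*)\]
established in the preceding proposition. The corollary is thus reduced to bounding the index $[(\calO_K/\f)^*:(\calO/\f)^*] \leq 2^{n+1} N_{L/\bbQ}(\f_0)$.

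The first step I would carry out is to prove the structural identity $\calO = \calO_L + \f$. This is a purely local claim: at each prime $\mathfrak{p}$ of $\calO_L$, the completion $\calO_{K,\mathfrak{p}}$ is a free $\calO_{L,\mathfrak{p}}$-module of rank two, and one can enumerate the $\calO_{L,\mathfrak{p}}$-orders between $\calO_{L,\mathfrak{p}}$ and $\calO_{K,\mathfrak{p}}$: they are exactly the subrings of the form $\calO_{L,\mathfrak{p}} + \mathfrak{p}^{k_\mathfrak{p}} \calO_{K,\mathfrak{p}}$ for $k_\mathfrak{p} \geq 0$, whose conductors are precisely $\mathfrak{p}^{k_\mathfrak{p}} \calO_{K,\mathfrak{p}}$. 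Globalizing, this identifies the conductor as an extension of an $\calO_L$-ideal, namely $\f = \f_0 \calO_K$, and gives the ring isomorphism $\calO/\f \cong \calO_L/\f_0$. In particular $(\calO/\f)^* \cong (\calO_L/\f_0)^*$.

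With the identification $\f = \f_0\calO_K$ in hand, the index factors by the Chinese Remainder Theorem over the primes $\mathfrak{p}$ of $\calO_L$ dividing $\f_0$, as
\[[(\calO_K/\f)^*:(\calO/\f)^*] = \prod_{\mathfrak{p}|\f_0} \frac{|(\calO_K/\mathfrak{p}^{k_\mathfrak{p}}\calO_K)^*|}{|(\calO_L/\mathfrak{p}^{k_\mathfrak{p}})^*|}.\]
A direct case analysis of how $\mathfrak{p}$ decomposes in the quadratic extension $K/L$ shows that each local factor equals $N\mathfrak{p}^{k_\mathfrak{p}}(1 - 1/N\mathfrak{p})$ in the split case, $N\mathfrak{p}^{k_\mathfrak{p}}(1 + 1/N\mathfrak{p})$ in the inert case, and $N\mathfrak{p}^{k_\mathfrak{p}}$ in the ramified case. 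In all three cases the factor is bounded by $2 N\mathfrak{p}^{k_\mathfrak{p}}$, with the factor of two coming from the inert contribution.

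The main obstacle is controlling the product of the $(1 + 1/N\mathfrak{p})$ factors. Naively one gets $2^{\omega(\f_0)} N_{L/\bbQ}(\f_0)$, where $\omega(\f_0)$ is the number of distinct prime divisors, which is weaker than the stated bound when $\f_0$ has many prime factors. The remedy is to exploit the fact that the inert factors $(1+1/N\mathfrak{p})$ are close to one unless $N\mathfrak{p}$ is small, together with the elementary observation that only finitely many primes of $\calO_L$ (controlled by $n = [L:\bbQ]$) can have small residue norm; reorganizing the product over primes according to the size of $N\mathfrak{p}$ absorbs the remaining factors into the $2^{n+1}$ constant. Combined with $N_{L/\bbQ}(\f_0) = \prod_{\mathfrak{p}|\f_0} N\mathfrak{p}^{k_\mathfrak{p}}$, this gives the claimed inequality.
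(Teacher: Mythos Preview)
Your structural setup is correct and in fact sharper than the paper's argument. You establish $\calO=\calO_L+\f$ with $\f=\f_0\calO_K$, identify $(\calO/\f)^*\cong(\calO_L/\f_0)^*$, and arrive at the exact formula
\[
[(\calO_K/\f)^*:(\calO/\f)^*]=N_{L/\bbQ}(\f_0)\prod_{\mathfrak p\mid\f_0}\Bigl(1-\frac{\chi(\mathfrak p)}{N\mathfrak p}\Bigr),
\]
with $\chi$ the quadratic character of $K/L$. The paper instead bounds $|(\calO/\f)^*|$ from below via the norm map $N_{K/L}:(\calO/\f)^*\to(\calO_L/\f_0)^*$, observing that its image contains the squares.

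The genuine gap is your final step. The ``remedy'' you propose---that the factors $(1+1/N\mathfrak p)$ are close to $1$ once $N\mathfrak p$ is large, and that only boundedly many primes of $\calO_L$ have small norm---does not give a bound depending only on $n$. The product $\prod_{\mathfrak p\mid\f_0}(1+1/N\mathfrak p)$ behaves like $\exp\bigl(\sum_{\mathfrak p\mid\f_0}1/N\mathfrak p\bigr)$, and by Mertens-type estimates this can be of order $\log\log N_{L/\bbQ}(\f_0)$ when $\f_0$ is the product of many small inert primes; individual factors being near $1$ does not prevent the product from diverging. Your own exact formula therefore shows that the stated inequality with the constant $2^{n+1}$ is \emph{false} in general. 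The paper's proof has the analogous flaw: the index of the squares in $(\calO_L/\f_0)^*$ is of size $2^{\omega(\f_0)}$ (up to bounded contributions at primes above $2$), not $\le 2^{n+1}$. For the downstream application (Proposition~\ref{p:count2}) this is harmless, since only the bound $[(\calO_K/\f)^*:(\calO/\f)^*]\ll_\epsilon N_{L/\bbQ}(\f_0)^{1+\epsilon}$ is used and the slowly growing factor is absorbed into the~$\epsilon$; but the constant $2^{n+1}$ cannot be salvaged by either route.
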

\begin{proof}
We need to give a bound for
$[(\calO_K/\f\calO_K)^*:(\calO/\f\calO)^*]$. Consider the norm map
from $\calN_{K/L}:(\calO/\f\calO)^*\to (\calO_L/\f_0)^*$. The image
of this map contains all the squares in $(\calO_L/\f_0)^*$ which is
a subgroup of index bounded by $2^{n+1}$. We can thus bound from
below $\sharp(\calO/\f\calO)^*\geq 2^{-n-1}\sharp(\calO_L/\f_0)^*$.
Consequently,
\[[(\calO_K/\f\calO_K)^*:(\calO/\f\calO)^*]\leq
2^{n+1}\frac{|(\calO_K/\f\calO_K)^*|}{|(\calO_L/\f\calO_L)^*|}=2^{n+1}\frac{N_{
K/\bbQ}(\f)}{N_{L/\bbQ}(\f_0)}.\]
Write $N_{K/\bbQ}(\f)=N_{L/\bbQ}(N_{K/L}(\f))$ and note that
$\f_0^2\subset N_{K/L}(\f)\subset \f_0$ to get
\[[(\calO_K/\f\calO_K)^*:(\calO/\f\calO)^*]\leq 2^{n+1}N_{L/\bbQ}(\f_0).\]
\end{proof}

\def\cprime{$'$}
\providecommand{\bysame}{\leavevmode\hbox
to3em{\hrulefill}\thinspace}
\providecommand{\MR}{\relax\ifhmode\unskip\space\fi MR }
\providecommand{\MRhref}[2]{%
  \href{http://www.ams.org/mathscinet-getitem?mr=#1}{#2}
} \providecommand{\href}[2]{#2}

\end{document}